\documentclass[11.5pt,twoside, a4paper, english, reqno]{amsart} 
\usepackage{amsaddr}
\usepackage{a4wide}
\usepackage{amscd}
\usepackage{amssymb}
\usepackage{amsthm}
\usepackage{graphicx}
\usepackage{amsmath,calrsfs}
\usepackage{latexsym}
\usepackage{enumitem,dsfont}

\usepackage{upref}
\usepackage[colorlinks]{hyperref}
\usepackage{color,graphics}
\usepackage{upref,hyperref,color}

\usepackage[usenames,dvipsnames]{xcolor}
\usepackage{pdfsync}
\usepackage{ulem,cancel,pgf}

\usepackage{frcursive}


\setlength{\topmargin}{-.5cm}
\setlength{\textheight}{24cm}
\setlength{\evensidemargin}{0cm}
\setlength{\oddsidemargin}{0cm}
\setlength{\textwidth}{16cm}
\theoremstyle{plain}
\newtheorem{theorem}{Theorem}[section]
\theoremstyle{plain}
\newtheorem{lemma}[theorem]{Lemma}
\newtheorem{proposition}[theorem]{Proposition}
\newtheorem{corollary}[theorem]{Corollary}

\theoremstyle{definition}
\newtheorem{definition}{Definition}[section]

\newtheorem{remark}{Remark}[section]
\newtheorem{claim}{Claim}[section]

\newtheorem*{maintheorem*}{Main Theorem}
\newtheorem*{maincorollary*}{Main Corollary}


\DeclareFontFamily{U}{BOONDOX-calo}{\skewchar\font=50 }
\DeclareFontShape{U}{BOONDOX-calo}{m}{n}{
	<-> s*[1.05] BOONDOX-r-calo}{}
\DeclareFontShape{U}{BOONDOX-calo}{b}{n}{
	<-> s*[1.05] BOONDOX-b-calo}{}
\DeclareMathAlphabet{\mathcalb}{U}{BOONDOX-calo}{m}{n}
\SetMathAlphabet{\mathcalb}{bold}{U}{BOONDOX-calo}{b}{n}
\DeclareMathAlphabet{\mathbcalb}{U}{BOONDOX-calo}{b}{n}



\numberwithin{equation}{section} \allowdisplaybreaks

\title[$ W^{s, G}_{\text{loc}}(\Omega)- $Comparison principle]{Comparison principle for Singular Fractional $ g- $Laplacian Problems}
\date{\today}




\author[Abdelhamid Gouasmia]{Abdelhamid Gouasmia}
\address[Abdelhamid Gouasmia]{
Department of Mathematics, Faculty of Sciences And Technology,\\
Mohamed Cherif Messaadia University,\\
P.O.Box 1553, Souk Ahras 41000, Algeria.\\[4pt]
Laboratoire d'equations aux d\'{e}riv\'{e}es partielles non lin\'{e}aires et histoire des math\'{e}matiques,\\
Ecole Normale Sup\'{e}rieure,\\
B.P. 92, Vieux Kouba, 16050 Algiers, Algeria.}
\email[Abdelhamid Gouasmia]{abdelhamid.gouasmia@univ-soukahras.dz}
\author[Kaushik Bal]{Kaushik Bal}
\address[Kaushik Bal]{
	Indian Institute of Technology Kanpur, Kanpur, India}
\email[Kaushik Bal]{kaushik@iitk.ac.in}
\usepackage{comment}
\begin{document}
	
\begin{abstract}
	In this paper, we establish a novel comparison principle of independent interest and prove the uniqueness of weak solutions within the local Orlicz--Sobolev space framework, for the following class of fractional elliptic problems:
	\begin{equation*}
	(-\Delta)^{s}_{g} u = f(x) u^{-\alpha} + k(x) u^{\beta}, \quad u > 0 \quad \text{in } \Omega; \quad u = 0 \quad \text{in } \mathbb{R}^{N} \setminus \Omega,
	\end{equation*}
	where \( \Omega \subset \mathbb{R}^{N} \) is a smooth bounded domain, \( \alpha > 0 \), and \( \beta > 0 \) satisfies a suitable upper bound. Here, \( (-\Delta)^{s}_{g} \) denotes the fractional \( g \)-Laplacian, with \( g \) being the derivative of a Young function \( G \). The function \( f \) is assumed to be nontrivial, while \( k \) is a positive function, and both \( f \) and \( k \) are assumed to lie in suitable Orlicz spaces. Our analysis relies on a refined variational approach that incorporates a \( G \)-fractional version of the D\'iaz--Saa inequality together with a \( G \)-fractional analogue of Picone's identity. These tools, which are of independent interest, also play a key role in the study of simplicity of eigenvalues, Sturmian-type comparison results, Hardy-type inequalities, and related topics.
\end{abstract}

\maketitle
\textbf{Keywords:}	singular nonlinearity, uniqueness results, fractional $ g- $Laplacian, Picone's inequality.\\[0.5mm]
\hspace*{0.45cm}\textbf{MSC:} 35J75, 35R11, 35J62.  \\

\setcounter{tocdepth}{1}
\section{Introduction}
Since the 1930s, Orlicz and Orlicz-Sobolev spaces have been extensively studied, particularly due to their numerous applications in physics and engineering, where problems involving \textit{nonstandard growth} differential equations arise. These spaces generalize the classical Lebesgue and Sobolev spaces by replacing the standard power function \( t^{p} \) with a more general convex function \( G(t) \). For a comprehensive discussion and various applications, we refer to \cite{adams2003sobolev, bahrouni2023espaces, bonder2019fractional, krasnoselskii1961convex, lieberman1991natural, martinez2008minimum} and the references therein. In recent years, considerable attention has been devoted to the study of nonlocal elliptic operators, particularly in connection with problems exhibiting anomalous diffusion and transport phenomena. In this context, a fractional counterpart of the classical Orlicz-Sobolev space naturally emerges. In particular, the authors in \cite{bonder2019fractional} introduced the fractional Orlicz-Sobolev space associated with an \( N \)-function \( G \) and a fractional parameter \( 0 < s < 1 \), defined as
{\small \[
W^{s, G}(\Omega) = \left\lbrace u \in L^{G}(\Omega) :\quad \int_{\Omega} \int_{\Omega} G\left(\dfrac{u(x) - u(y)}{\left| x - y\right| ^{s}}\right) \dfrac{dx dy}{\left| x - y\right| ^{N}} < \infty \right\rbrace,
\]}

\noindent
where \( \Omega \) is an open subset of \( \mathbb{R}^{N} \), and \( L^{G}(\Omega) \) denotes the Orlicz space (see Section \ref{sc2} for precise definitions). In the same work, a generalization of the fractional \( p \)-Laplacian, referred to as the fractional \( g \)-Laplace operator, is introduced as  
{\small \[
(- \Delta)_{g}^{s} u(x) = \textbf{P.V.} \int_{\mathbb{R}^{N}} g\left( \dfrac{\left| u(x) - u(y) \right|}{\left| x - y \right|^{s}} \right) 
\dfrac{u(x) - u(y)}{\left| u(x) - u(y) \right|} \cdot \dfrac{dy}{\left| x - y \right|^{N + s}},
\]}

\noindent
where \textbf{P.V.} denotes the Cauchy principal value, and \( g = G' \) is the derivative of the \( N \)-function \( G \).

\noindent
Recently, considerable attention has been devoted to elliptic problems involving the fractional \( g \)-Laplacian, which exhibits nonlocal behavior. Several fundamental aspects have been explored, including structural properties such as existence, the maximum principle, and interior Sobolev and Lipschitz regularity \cite{alberico2021fractional, bahrouni2020basic, bonder2022interior, ochoa2024existence}. Various functional inequalities, such as Hardy and Poincaré inequalities, have also been investigated (see \cite{bal2022hardy}). Additionally, a range of partial differential equation (PDE) problems, including eigenvalue problems, has been analyzed \cite{bahrouni2023variational, salort2020eigenvalues}.  Despite these advancements, relatively few results are available concerning singular problems involving the fractional \( g \)-Laplacian. To the best of our knowledge, the only existing studies in the literature addressing this topic are \cite{bal2024singular} and \cite{boujemaa2023new}, with the latter focusing on a new class of fractional Orlicz-Sobolev spaces with variable order.  Let us bring our attention to the results in \cite{bal2024singular}, where the authors establish the existence and regularity of \( W^{s, G}_{\text{loc}}(\Omega) \)-solutions for the following singular problem with a variable exponent:
\begin{equation}\label{P2}
(-\Delta)^{s}_{g} u  = h(x) u^{-q(x)}, \quad u > 0 \quad \text{in } \Omega; \quad u = 0, \quad \text{in } \mathbb{R}^{N} \setminus \Omega,
\end{equation}
where \( \Omega \subset \mathbb{R}^N \) is a smooth bounded domain. The assumptions include \( 0 < s < 1 \), a non-negative function \( q \in C^{1}(\overline{\Omega}) \), and \( h \) belonging to a suitable Orlicz space. Moreover, the functions \( G \) and \( g \) satisfy the conditions:
\begin{itemize}
	\item[\textbf{(H1)}] The function \( g = G' \) is absolutely continuous and thus differentiable almost everywhere.
\item[\textbf{(H2)}] The function \( g' \) is nondecreasing on \( (0, \infty) \), and consequently on \( \mathbb{R} \setminus \{0\} \).
	\item[\textbf{(H3)}] The function \( G \) satisfies
	\[
	\int_{0}^{1}\frac{G^{-1}(\tau)}{\tau^{\frac{N + s}{N}}} d\tau < \infty \quad \text{and} \quad \int_{1}^{\infty}\frac{G^{-1}(\tau)}{\tau^{\frac{N + s}{N}}} d\tau  = \infty.
	\]
	\item[\textbf{(H4)}] There exist constants \( p^{+} \) and \( p^{-} \) such that
	\[
	1 < p^{-} - 1 \leq \dfrac{t g'(t)}{g(t)} \leq p^{+} - 1 \leq \infty, \quad t > 0.
	\]
\end{itemize}
It is noteworthy that a substantial body of research has been dedicated to investigating various classes of singular problems through diverse analytical techniques. In particular, these studies encompass both the \( p \)-Laplace operator, defined for \( p > 1 \) as  
\[
\Delta_{p} u = \text{div} \left( \left|\nabla u \right|^{p-2} \nabla u \right),
\]
and the fractional \( q \)-Laplacian, which, for a fixed parameter \( s \in (0,1) \), is given by  
\begin{align*}
( -\Delta ) ^{s}_{q} u(x) &:= 2\, \textbf{P.V.} \int_{\mathbb{R}^N} \frac{|u(x) - u(y) |^{q-2} ( u(x) - u(y)) }{| x-y|^{N + s q} }\, dy,
\end{align*}
where \( q > 1 \).  This operator represents a particular case of the fractional \( g \)-Laplacian when \( G(t) = t^{q} \). Moreover, numerous studies have examined equations involving mixed-type operators that integrate both local and nonlocal structures. For a comprehensive overview of recent developments concerning existence and nonexistence results, as well as other qualitative properties, we refer to \cite{barrios2015semilinear, canino2017moving, durastanticomparison, giacomoni2012singular} and the references therein.  One of the primary challenges in establishing uniqueness results for singular problems stems from the fact that, in general, solutions do not belong to the finite energy space, as highlighted in the aforementioned results for \eqref{P2}. To address this issue, several approaches have been proposed. In particular, the variational technique introduced in \cite{canino2004variational} was employed to establish uniqueness results in the context of the local semilinear case. This method was later refined in \cite{canino2016uniqueness} and has since been extensively applied to prove uniqueness results for problems involving local, nonlocal, and mixed operators; see, for instance, \cite{arora2021regularity, bal2024regularity, dhanya2024interior, giacomoni2021sobolev, gouasmia2024uniqueness}, though this list is not exhaustive.  
This naturally gives rise to the following question:
\[
\textbf{Question:} \quad \text{Can analogous uniqueness results be established for problem \eqref{P2}?}
\]
To explore this question, we consider a more general framework that implicitly encompasses problem \eqref{P2}. Specifically, we examine the following problem:
\begin{equation}\label{GP}\tag{P}
(-\Delta)^{s}_{g} u = f(x) u^{-\alpha} + k(x) u^{\beta}, \quad u > 0 \quad \text{in } \Omega; \quad u = 0 \quad \text{in } \mathbb{R}^{N} \setminus \Omega,
\end{equation}
where the parameter \(\beta\) satisfies \( 0 < \beta < p^{-} - 1 \) and \( \alpha > 0 \). The function \( f \) is nonzero and belongs to an appropriate Orlicz space, while \( k \) is a positive function in a suitable Orlicz space. The primary objective of this paper is to establish the uniqueness of infinite energy solutions to problem \eqref{GP}, provided such solutions exist. Our investigation relies on a new comparison principle developed in this work under specific conditions (see Theorem \ref{Theorem1} below). This approach refines the method introduced in~\cite{canino2016uniqueness} {(see also~\cite{carvalho2018type} for related techniques), by employing a weak comparison principle derived from the convexity properties of the involved functionals to effectively handle the difficulties introduced by the nonlocal terms. Moreover, appropriate test functions are carefully chosen in conjunction with the condition on \( \beta \) previously described.

\noindent
On the other hand, we investigate the uniqueness of finite-energy solutions and explore the possibility that such solutions may exhibit unbounded behavior under fairly general conditions, as described by the following problem:
\begin{equation}\label{GGP}\tag{GP}
(-\Delta)^{s}_{g} u = F(x, u), \quad u > 0 \quad \text{in } \Omega; \quad u = 0 \quad \text{in } \mathbb{R}^{N} \setminus \Omega,
\end{equation}
where the nonlinearity \( F \) satisfies the following assumptions:
\begin{enumerate}
	\item[\textbf{(F1)}] \( F : \Omega \times \mathbb{R}^{+} \to \mathbb{R}^{+} \) is a Carathéodory function, that is, \( F(x, \cdot) \) is continuous on \( \mathbb{R}^{+} \) for almost every \( x \in \Omega \), and \( F(\cdot, s) \) is measurable for all \( s > 0 \).
	\item[\textbf{(F2)}] For almost every \( x \in \Omega \), the map \( s \mapsto \dfrac{F(x, s)}{s^{p^{-}-1}} \) is non-increasing on \( \mathbb{R}^{+} \setminus \{0\} \).
\end{enumerate}
The structure of the paper is as follows: Section~\ref{sc2} presents the necessary preliminary results in a precise manner. We establish a D\'{\i}az and Saa type inequality for the \( g \)-Laplacian, which leads to a uniqueness result. These results are of independent interest and may be applied to other classes of problems. The statements of the main theorems are also provided in this section. Section~\ref{sc3} is devoted to the proofs of these results.

\section{Definitions and Main Results}\label{sc2}

This section is organized into two subsections. The first subsection provides a concise overview of the fundamental concepts and key properties related to $N$-functions and Orlicz–Sobolev spaces (see \cite{bahrouni2020basic, bonder2019fractional, fernandez2022holder, krasnoselskii1961convex}), along with supplementary definitions and auxiliary results that are crucial for the development of this paper. The second subsection is devoted to the presentation of the main results.
\subsection{Preliminaries and Functional Setting}

Let $\Omega \subset \mathbb{R}^N$ be a bounded domain with $C^{0,1}$-regular boundary. We adopt the following definitions:

\medskip
\noindent
A function $G : \mathbb{R}^{+} \to \mathbb{R}^{+}$ is called an \textit{$N$-function} if it is continuous, convex, and satisfies:
\[
G(t) = 0 \iff t = 0, \quad \frac{G(t)}{t} \to 0 \text{ as } t \to 0, \quad \frac{G(t)}{t} \to \infty \text{ as } t \to \infty.
\]
Equivalently, the function $G$ can be represented as
\[
G(t) = \int_0^t g(s) \, ds,
\]
where $g : \mathbb{R}^{+} \to \mathbb{R}^{+}$ is a non-decreasing and right-continuous function satisfying
\[
g(t) = 0 \iff t = 0, \quad \text{and} \quad \lim_{t \to \infty} g(t) = \infty.
\]
In addition, throughout this work, we assume that \( g \in C^{1}(\mathbb{R}^{+}) \). The following examples of the function \( G \) fall within our framework:	
	\begin{itemize}
		\item[(1)] \( G(t) := t \). In this case, the associated operator reduces to the standard fractional Laplacian: 
		\[
		(-\Delta)^{s}_{g} u = (-\Delta)^{s} u.
		\]		
		
		\item[(2)] \( G(t) := \frac{1}{p} t^{p} \), where \( p \geq 2 \). This corresponds to the fractional \( p \)-Laplacian operator.		
		
		\item[(3)] \( G(t) := \frac{1}{p} t^{p} \left( \left| \ln(t) \right| + 1 \right) \), where \( p \geq 2 \). This function exhibits slower growth near zero and is useful in modeling singular behaviors.		
		
		\item[(4)] \( G(t) := \frac{1}{p} t^{p} + \frac{1}{q} t^{q} \), with \( p, q \geq 2 \). In this case, the operator takes the form
		\[
		(-\Delta)^{s}_{g} u = (-\Delta)^{s}_{p} u + (-\Delta)^{s}_{q} u,
		\]
		which corresponds to the \((p,q)\)-fractional Laplacian.
	\end{itemize}

\noindent It is established in \cite[Theorem 4.1]{krasnoselskii1961convex} that the upper bound in condition \textbf{(H4)} is equivalent to the so-called $\Delta_2$-condition (also known as the doubling condition), which states that
\begin{equation}\label{equ1}\tag{$\Delta_2$}
G(2t) \leq 2^{p^{+}} G(t) \quad \text{for all } t \geq 0.
\end{equation}
Moreover, the same condition \textbf{(H4)} implies the following inequality:
\begin{equation}\label{equ2}
2 < p^{-} \leq \frac{t\, g(t)}{G(t)} \leq p^{+} \leq \infty, \quad \text{for all } t > 0.
\end{equation}

\noindent
We say that $G$ grows essentially faster than another $N$-function $H$, denoted $H \prec\prec G$, if for every $k > 0$,
\[
\lim_{t \to \infty} \frac{H(k t)}{G(t)} = 0.
\]

\noindent
The Sobolev conjugate of $G$, denoted by $G_*$, is defined as
\[
G_{*}^{-1}(t) := \int_0^t \frac{G^{-1}(\tau)}{\tau^{\frac{N + s}{N}}} \, d\tau.
\]

\noindent
The complementary function of $G$, denoted by $\overline{G}$, is defined for all $t \geq 0$ by
\[
\overline{G}(t) := \sup_{\tau \geq 0} \left( t \tau - G(\tau) \right).
\]
This leads to the following version of Young’s inequality:
\begin{equation}\label{equ3}
ab \leq G(a) + \overline{G}(b) \quad \text{for all } a, b \geq 0.
\end{equation}

\noindent
Let \( u : \mathbb{R}^{N} \to \mathbb{R} \) be a measurable function. We define the associated modular functionals as
\begin{equation*}
\Phi_{G}(u) := \int_{\Omega} G\left( \left| u(x) \right| \right) \, dx, \quad  
\Phi_{s, G}(u) := \int_{\Omega} \int_{\Omega} G\left( \frac{\left| u(x) - u(y) \right|}{\left| x - y \right|^{s}} \right) \frac{dx \, dy}{\left| x - y \right|^{N}}.
\end{equation*}

\begin{lemma}[\cite{bal2024singular}, Lemma 2.1]\label{lemma1}
	Let \( G \) be an \( N \)-function satisfying \eqref{equ2}. Then, for all \( \lambda \geq 0 \) and all \( t > 0 \),  the following inequalities hold:
	\[
	\min\left\{ \lambda^{p^{-}}, \lambda^{p^{+}} \right\} G(t) \leq G(\lambda t) \leq \max\left\{ \lambda^{p^{-}}, \lambda^{p^{+}} \right\} G(t).
	\]
\end{lemma}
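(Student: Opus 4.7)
The plan is to reduce the two-sided bound to a simple ODE-type argument on the function $\phi(\lambda) := G(\lambda t)$ for fixed $t > 0$, using the logarithmic form of condition~\eqref{equ2}. The case $\lambda = 0$ is immediate since $G(0) = 0$, and $\lambda = 1$ is trivial, so I would first dispose of these and then fix $t>0$ and consider $\lambda > 0$ with $\lambda \ne 1$.

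Next I would differentiate: since $g$ is continuous (in fact $C^1$ here) and $G(t) = \int_0^t g(s)\,ds$, we have $\phi'(\lambda) = t\,g(\lambda t)$, and $\phi(\lambda) > 0$ for $\lambda > 0$. The key identity is
\[
\frac{d}{d\lambda} \ln \phi(\lambda) \;=\; \frac{t\,g(\lambda t)}{G(\lambda t)} \;=\; \frac{1}{\lambda} \cdot \frac{(\lambda t)\,g(\lambda t)}{G(\lambda t)}.
\]
Condition~\eqref{equ2}, applied at the point $\lambda t > 0$, gives $p^{-} \le \tfrac{(\lambda t) g(\lambda t)}{G(\lambda t)} \le p^{+}$, hence
\[
\frac{p^{-}}{\lambda} \;\le\; \frac{d}{d\lambda}\ln \phi(\lambda) \;\le\; \frac{p^{+}}{\lambda}.
\]

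Finally I would integrate this inequality, handling the two cases separately. For $\lambda > 1$, integrating from $1$ to $\lambda$ yields $p^{-}\ln\lambda \le \ln\phi(\lambda) - \ln\phi(1) \le p^{+}\ln\lambda$, so $\lambda^{p^{-}} G(t) \le G(\lambda t) \le \lambda^{p^{+}} G(t)$; here $\min\{\lambda^{p^{-}},\lambda^{p^{+}}\} = \lambda^{p^{-}}$ and $\max = \lambda^{p^{+}}$, matching the claim. For $0 < \lambda < 1$, integrating from $\lambda$ to $1$ reverses the roles: $p^{-}\ln(1/\lambda) \le \ln\phi(1) - \ln\phi(\lambda) \le p^{+}\ln(1/\lambda)$, giving $\lambda^{p^{+}} G(t) \le G(\lambda t) \le \lambda^{p^{-}} G(t)$, which again matches since $\min\{\lambda^{p^{-}},\lambda^{p^{+}}\} = \lambda^{p^{+}}$ and $\max = \lambda^{p^{-}}$ when $\lambda < 1$.

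I do not anticipate any real obstacle here; the only point requiring a little care is the case split $\lambda \gtrless 1$ to align the endpoints of integration with $\min/\max$. If $p^{+} = \infty$ one interprets $\lambda^{p^{+}}$ in the obvious limiting sense (the upper bound in \eqref{equ2} becomes vacuous and only the lower bound is used), which does not affect the stated inequalities.
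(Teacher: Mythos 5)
Your proof is correct: the case split at $\lambda=1$, the identity $\frac{d}{d\lambda}\ln G(\lambda t)=\frac{1}{\lambda}\cdot\frac{(\lambda t)g(\lambda t)}{G(\lambda t)}$ bounded via \eqref{equ2}, and the integration over $[1,\lambda]$ or $[\lambda,1]$ give exactly the stated two-sided estimate, and your handling of $\lambda=0$ and $p^{+}=\infty$ is fine. The paper itself offers no proof, merely citing Lemma~2.1 of \cite{bal2024singular}, and your logarithmic-differentiation argument is the standard one used there, so there is nothing further to reconcile.
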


\noindent
The norm in the Orlicz space \( L^{G}(\Omega) \) is defined by
\[
\left\| u \right\|_{L^{G}(\Omega)} := \inf\left\lbrace \lambda > 0 \mid \Phi_{G}\left( \frac{u}{\lambda} \right) \leq 1 \right\rbrace.
\]

\noindent
A direct consequence of Young’s inequality~\eqref{equ3} yields the following lemma:
\begin{lemma}[Hölder inequality]\label{lemma3}
	Let \( G \) be an \( N \)-function, and let $\overline{G}$ be its complementary function. Then, for every $ u \in L^{G}(\Omega) $ and $ v \in L^{\overline{G}}(\Omega) $, the following inequality holds:
	\[
	\int_{\Omega} uv\, dx \leq 2 \left\| u \right\|_{L^{G}(\Omega)} \left\| v \right\|_{L^{\overline{G}}(\Omega)}.
	\]
\end{lemma}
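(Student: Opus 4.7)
The plan is to derive the claim directly from the pointwise Young inequality \eqref{equ3} by a standard normalization argument tied to the Luxemburg norm. First I would dispose of the trivial case: if $\|u\|_{L^{G}(\Omega)} = 0$ or $\|v\|_{L^{\overline{G}}(\Omega)} = 0$, then $u = 0$ or $v = 0$ almost everywhere on $\Omega$, and both sides of the stated inequality vanish, so nothing is to prove.

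In the nontrivial case, set $\lambda := \|u\|_{L^{G}(\Omega)}$ and $\mu := \|v\|_{L^{\overline{G}}(\Omega)}$, and apply \eqref{equ3} pointwise with $a = |u(x)|/\lambda$ and $b = |v(x)|/\mu$ to obtain
\[
\frac{|u(x)\, v(x)|}{\lambda \mu} \leq G\!\left(\frac{|u(x)|}{\lambda}\right) + \overline{G}\!\left(\frac{|v(x)|}{\mu}\right) \quad \text{for a.e. } x \in \Omega.
\]
Integrating over $\Omega$ and multiplying through by $\lambda \mu$ yields
\[
\int_{\Omega} |u(x) v(x)|\, dx \leq \lambda \mu \Bigl( \Phi_{G}(u/\lambda) + \Phi_{\overline{G}}(v/\mu) \Bigr).
\]

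The only point that deserves care is the bound $\Phi_{G}(u/\lambda) \leq 1$ (and likewise for $v$). By the very definition of the Luxemburg norm, for every $\varepsilon > 0$ we have $\Phi_{G}(u/(\lambda + \varepsilon)) \leq 1$; letting $\varepsilon \to 0^{+}$ and invoking the monotone convergence theorem together with the continuity of $G$ gives $\Phi_{G}(u/\lambda) \leq 1$, and the symmetric reasoning gives $\Phi_{\overline{G}}(v/\mu) \leq 1$. Substituting these bounds into the previous display leads to $\int_{\Omega} |u v|\, dx \leq 2\, \lambda \mu$, which is the claimed inequality since $\int_{\Omega} u v\, dx \leq \int_{\Omega} |u v|\, dx$. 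I do not anticipate any substantive obstacle here: the $\Delta_{2}$ condition inherited from \textbf{(H4)} makes $\Phi_{G}$ everywhere finite on $L^{G}(\Omega)$, so the passage $\varepsilon \to 0^{+}$ is entirely routine, and no tool beyond \eqref{equ3} and the definition of the Luxemburg norm is required.
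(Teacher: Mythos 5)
Your proof is correct and follows the same route the paper implicitly intends — the paper introduces the lemma as ``a direct consequence of Young's inequality,'' and your normalization by the Luxemburg norms together with the modular bound $\Phi_G(u/\lambda)\le 1$ is exactly that argument spelled out. One small remark: the appeal to the $\Delta_2$ condition from \textbf{(H4)} is both unnecessary (your monotone-convergence argument already gives $\Phi_G(u/\lambda)\le 1$ without it) and slightly out of place, since the lemma is stated for a general $N$-function and the complementary function $\overline{G}$ need not satisfy $\Delta_2$ even when $G$ does.
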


\noindent
The  fractional  Orlicz--Sobolev space \( W^{s, G}(\Omega) \) is defined by
\[
W^{s, G}(\Omega) := \left\lbrace u \in L^{G}(\Omega) \,\middle|\, \exists\, \lambda > 0 \text{ such that } \Phi_{s, G}\left( \frac{u}{\lambda} \right) < \infty \right\rbrace,
\]
and is endowed with the norm
\[
\left\| u \right\|_{W^{s, G}(\Omega)} := \left\| u \right\|_{L^{G}(\Omega)} + \left[ u \right]_{s, G},
\]
where
\[
\left[ u \right]_{s, G} := \inf\left\lbrace \lambda > 0 \,\middle|\, \Phi_{s, G}\left( \frac{u}{\lambda} \right) \leq 1 \right\rbrace,
\]
is called the $(s, G)$-Gagliardo seminorm.  We also define the subspace \( W^{s, G}_{0}(\Omega) \) by
\[
W^{s, G}_{0}(\Omega) := \left\lbrace u \in W^{s, G}(\mathbb{R}^{N}) \,\middle|\, u \equiv 0 \text{ in } \mathbb{R}^{N} \setminus \Omega \right\rbrace,
\]
which is endowed with the norm
\[
\| u \|_{W^{s, G}_{0}(\Omega)} := \left[ u \right]_{s, G}.
\]

\noindent
As a consequence of Lemma~\ref{lemma1}, we derive estimates relating the modulars to their norms.
\begin{lemma}\label{lemma2}
	Let \( G \) be an \( N \)-function satisfying condition \eqref{equ2}. Then, for every \( u \in L^{G}(\Omega) \), the following inequality holds:
	\[
	\min\left\{ \left\| u \right\|_{L^{G}(\Omega)}^{p^{-}},\, \left\| u \right\|_{L^{G}(\Omega)}^{p^{+}} \right\} 
	\leq \Phi_{G}(u) 
	\leq 
	\max\left\{ \left\| u \right\|_{L^{G}(\Omega)}^{p^{-}},\, \left\| u \right\|_{L^{G}(\Omega)}^{p^{+}} \right\}.
	\]
	Moreover, for every \( u \in W^{s, G}(\Omega) \), we have
	\[
	\min\left\{ \left\| u \right\|_{W^{s,G}(\Omega)}^{p^{-}},\, \left\| u \right\|_{W^{s,G}(\Omega)}^{p^{+}} \right\} 
	\leq \Phi_{s, G}(u) 
	\leq 
	\max\left\{ \left\| u \right\|_{W^{s,G}(\Omega)}^{p^{-}},\, \left\| u \right\|_{W^{s,G}(\Omega)}^{p^{+}} \right\}.
	\]
\end{lemma}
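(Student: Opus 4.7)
The approach will be to derive both inequalities as immediate consequences of the pointwise scaling estimate of Lemma~\ref{lemma1}, applied on top of the Luxemburg-type definition of the norms. The core observation is that, under the $\Delta_2$ condition \eqref{equ1} (which is equivalent to the upper bound in (H4)), the modular functional equals $1$ exactly when the argument is normalized by its Luxemburg norm, because the infimum in the definition is then attained.

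The first step is to treat the $L^{G}(\Omega)$ case. Assuming $u \not\equiv 0$, I set $\alpha := \|u\|_{L^{G}(\Omega)} > 0$ and invoke the continuity of $\Phi_{G}$ along scalar dilations, guaranteed by \eqref{equ1}, to conclude that $\Phi_{G}(u/\alpha) = 1$. Applying Lemma~\ref{lemma1} pointwise with $t = |u(x)|/\alpha$ and $\lambda = \alpha$, and integrating over $\Omega$, yields
\[
\min\{\alpha^{p^{-}}, \alpha^{p^{+}}\}\, \Phi_{G}(u/\alpha) \;\leq\; \Phi_{G}(u) \;\leq\; \max\{\alpha^{p^{-}}, \alpha^{p^{+}}\}\, \Phi_{G}(u/\alpha),
\]
and substituting $\Phi_{G}(u/\alpha) = 1$ produces the first claimed bound.

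The second step is to repeat the argument in the fractional Orlicz--Sobolev setting. Setting $\beta := [u]_{s,G}$ and applying the same continuity argument to the double integral $\Phi_{s,G}$, one obtains $\Phi_{s,G}(u/\beta) = 1$. Applying Lemma~\ref{lemma1} to the scaled difference quotient $|u(x)-u(y)|/(\beta\,|x-y|^{s})$ and integrating against the symmetric kernel $|x-y|^{-N}$ gives
\[
\min\{\beta^{p^{-}}, \beta^{p^{+}}\} \;\leq\; \Phi_{s,G}(u) \;\leq\; \max\{\beta^{p^{-}}, \beta^{p^{+}}\}.
\]
Since the paper essentially works within the subspace $W^{s,G}_{0}(\Omega)$, where the Gagliardo seminorm coincides with the ambient norm $\|\cdot\|_{W^{s,G}_{0}(\Omega)}$ by the Poincar\'e inequality, these bounds can be stated directly in terms of the norm; for the ambient space $W^{s,G}(\Omega)$, the monotonicity of $t \mapsto \min\{t^{p^{-}},t^{p^{+}}\}$ and $t \mapsto \max\{t^{p^{-}},t^{p^{+}}\}$, combined with the comparison between seminorm and full norm, completes the translation.

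The main obstacle I foresee is confirming that $\Phi_{G}(u/\alpha) = 1$ exactly (rather than only $\leq 1$); without $\Delta_{2}$ the Luxemburg infimum can fail to be attained. Invoking \eqref{equ1} resolves this because it forces $\Phi_{G}$ to be finite and continuous whenever the norm is finite, so the infimum is realized. A minor bookkeeping point, which I will handle by splitting into the two cases $\alpha \geq 1$ and $\alpha \leq 1$, is determining which exponent yields the minimum and which the maximum in $\min\{\alpha^{p^{-}},\alpha^{p^{+}}\}$ and $\max\{\alpha^{p^{-}},\alpha^{p^{+}}\}$; this is purely mechanical.
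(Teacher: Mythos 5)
Your treatment of the Lebesgue--Orlicz case is correct and is exactly the standard argument one would expect the paper to have in mind: under $\Delta_2$ (which follows from the upper bound in \eqref{equ2}), $\lambda\mapsto\Phi_G(u/\lambda)$ is finite, continuous, and non-increasing, so indeed $\Phi_G(u/\alpha)=1$ at $\alpha=\|u\|_{L^G(\Omega)}$; the pointwise application of Lemma~\ref{lemma1} with $t=|u(x)|/\alpha$, $\lambda=\alpha$, followed by integration, then gives both bounds. (One could also avoid invoking attainment of the infimum by scaling with $\alpha-\varepsilon$ and letting $\varepsilon\to 0$ for the lower bound, but your route is valid given the standing hypotheses.)

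The second part, however, contains a genuine gap precisely at the step you flag as a "translation." What your argument establishes is
\[
\min\bigl\{[u]_{s,G}^{p^-},[u]_{s,G}^{p^+}\bigr\}\;\leq\;\Phi_{s,G}(u)\;\leq\;\max\bigl\{[u]_{s,G}^{p^-},[u]_{s,G}^{p^+}\bigr\},
\]
i.e.\ the estimate against the Gagliardo \emph{seminorm}. The passage to the full norm $\|u\|_{W^{s,G}(\Omega)}=\|u\|_{L^G(\Omega)}+[u]_{s,G}$ only works in one direction: since $[u]_{s,G}\leq\|u\|_{W^{s,G}(\Omega)}$, monotonicity of $t\mapsto\max\{t^{p^-},t^{p^+}\}$ carries the upper bound over, but for the lower bound it gives $\min\{[u]_{s,G}^{p^-},[u]_{s,G}^{p^+}\}\leq\min\{\|u\|_{W^{s,G}(\Omega)}^{p^-},\|u\|_{W^{s,G}(\Omega)}^{p^+}\}$, which points the wrong way. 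In fact the stated lower bound against $\|u\|_{W^{s,G}(\Omega)}$ is simply false in general: take $u\equiv c\neq 0$ on the bounded domain $\Omega$, so $\Phi_{s,G}(u)=0$ while $\|u\|_{W^{s,G}(\Omega)}=\|u\|_{L^G(\Omega)}>0$. What the paper actually uses throughout (see Remark~\ref{remark2}) is the seminorm version on $W^{s,G}_0(\Omega)$, where $\|\cdot\|_{W^{s,G}_0(\Omega)}:=[\cdot]_{s,G}$, and there your bound is exactly what is needed; but it cannot be upgraded to the ambient $W^{s,G}(\Omega)$-norm as the lemma is phrased, and the sentence in your proof claiming that monotonicity "completes the translation" should be removed or replaced with an explicit caveat that the result holds for the seminorm.
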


\begin{lemma}[Poincaré inequality {\cite[Theorem 2.12]{fernandez2022holder}}]\label{lemma4}
	Let \( G \) be an \( N \)-function. Then there exists a constant \( C = C(N, s, p^{+}, p^{-}, \Omega) > 0 \) such that, for every \( u \in W^{s, G}_{0}(\Omega) \), the following inequality holds:
	\[
	\int_{\mathbb{R}^{N}} G\left( \left| u(x) \right| \right) \, dx \leq C \int_{\mathbb{R}^{N}} \int_{\mathbb{R}^{N}} G\left( \frac{\left| u(x) - u(y) \right|}{\left| x - y \right|^{s}} \right) \frac{dx \, dy}{\left| x - y \right|^{N}}.
	\]
\end{lemma}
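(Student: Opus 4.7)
The plan is to exploit the Dirichlet condition $u \equiv 0$ in $\mathbb{R}^{N} \setminus \Omega$ by comparing $G(|u(x)|)$ to the integrand of $\Phi_{s,G}(u)$ along pairs $(x,y)$ with $x \in \Omega$ and $y$ in a suitably chosen exterior set. Since any point of $\mathbb{R}^{N} \setminus \Omega$ is a ``free zero'' of $u$, this converts a one-variable Poincar\'e-type inequality into a two-variable integral bound, bypassing any need for density or embedding arguments.

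First, I would fix a bounded set $E \subset \mathbb{R}^{N} \setminus \Omega$ with $0 < |E| < \infty$ whose closure is disjoint from $\overline{\Omega}$; concretely, take $E = B_{r}(x_{0})$ for some ball lying in the exterior of $\Omega$. This yields constants $0 < d_{0} \leq |x-y| \leq R_{0}$ whenever $x \in \Omega$ and $y \in E$, where $d_{0}$, $R_{0}$, and $|E|$ depend only on $\Omega$ (through the explicit choice of $E$).

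Next comes the pointwise comparison. For such $(x,y)$, the identity
\[
|u(x)| = |u(x) - u(y)| = |x-y|^{s} \cdot \frac{|u(x) - u(y)|}{|x-y|^{s}},
\]
combined with Lemma \ref{lemma1} (applied with $\lambda = |x-y|^{s}$), yields
\[
G(|u(x)|) \leq C_{1}\, G\!\left( \frac{|u(x) - u(y)|}{|x - y|^{s}} \right), \qquad C_{1} := \max\{1,\, R_{0}^{s p^{+}},\, R_{0}^{s p^{-}}\}.
\]
Multiplying by $|x-y|^{-N}$, which lies in $[R_{0}^{-N}, d_{0}^{-N}]$ on $\Omega \times E$, and integrating in $y \in E$ gives
\[
\frac{|E|}{R_{0}^{N}}\, G(|u(x)|) \leq C_{1} \int_{E} G\!\left( \frac{|u(x) - u(y)|}{|x - y|^{s}} \right) \frac{dy}{|x-y|^{N}}.
\]
A subsequent integration in $x \in \Omega$, enlarging $E$ to $\mathbb{R}^{N}$ on the right and using $u \equiv 0$ outside $\Omega$ to rewrite the left-hand side as an integral over $\mathbb{R}^{N}$, would complete the estimate with $C = C_{1} R_{0}^{N}/|E|$.

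The substantive step is the scaling estimate from Lemma \ref{lemma1}, which converts the factor $|x-y|^{s}$ into a multiplicative constant in front of $G$ and is responsible for the dependence of $C$ on $p^{\pm}$; everything else is a bookkeeping exercise. The one delicate point is that Lemma \ref{lemma1} produces $\max\{\lambda^{p^{-}}, \lambda^{p^{+}}\}$, so $C_{1}$ must be taken as a maximum in order to handle both the regimes $|x-y|^{s} \leq 1$ and $|x-y|^{s} \geq 1$ simultaneously; no serious obstacle is anticipated beyond this.
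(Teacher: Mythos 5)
Your proof is correct and self-contained. The paper does not supply its own argument for this lemma but cites it from \cite[Theorem~2.12]{fernandez2022holder}; your approach --- fixing an exterior ball $E\subset\mathbb{R}^{N}\setminus\overline{\Omega}$, exploiting the ``free zero'' $u\equiv 0$ on $E$ to write $|u(x)|=|u(x)-u(y)|$, and scaling away the factor $|x-y|^{s}$ via Lemma~\ref{lemma1} --- is the standard derivation of nonlocal Poincar\'e inequalities under homogeneous Dirichlet exterior data and is essentially the argument underlying the cited result.
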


\begin{remark}\label{remark1}
	By the Poincaré-type inequality, the norm \( \| \cdot \|_{W^{s, G}_{0}(\Omega)} \) is equivalent to the norm \( \| \cdot \|_{W^{s, G}(\mathbb{R}^{N})} \) on the space \( W^{s, G}_{0}(\Omega) \). Moreover, if \( G \) is an \( N \)-function satisfying condition \eqref{equ2} and assumption \textbf{(H3)}, then, as shown in \cite[Theorem 1.2]{bahrouni2019embedding}, the following continuous embedding holds:
	\[
	W^{s, G}_{0}(\Omega) \hookrightarrow L^{G_{*}}(\Omega),
	\]
	together with the compact embedding
	\[
	W^{s, G}_{0}(\Omega) \hookrightarrow L^{H}(\Omega), \quad \text{for every } H \prec\prec G_{*}.
	\]
\end{remark}

\noindent
We now recall a fundamental result that will play a key role in the subsequent analysis.

\begin{proposition}[\( G \)-fractional hidden convexity]\label{proposition1}
	Let \( G \) be an \( N \)-function satisfying assumption \textbf{(H4)}, and suppose that \( 1 < q \leq p^{-} \). For every pair of nonnegative functions \( u_0, u_1 \), define
	\[
	\sigma_t(x) := \left[ (1 - t)\, u_0^{q}(x) + t\, u_1^{q}(x) \right]^{\frac{1}{q}}, \quad
	t \in [0, 1],\; x \in \mathbb{R}^N.
	\]
	Then, for all \( t \in [0, 1] \) and all \( x, y \in \mathbb{R}^N \), the following inequality holds:
	\[
	G\left( \left| \sigma_t(x) - \sigma_t(y) \right| \right) \leq (1 - t)\, G\left( \left| u_0(x) - u_0(y) \right| \right) + t\, G\left( \left| u_1(x) - u_1(y) \right| \right).
	\]
\end{proposition}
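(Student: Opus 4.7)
The plan is to split the pointwise inequality into two essentially independent steps: a Minkowski-type estimate that involves only the parameter $q$, followed by a Jensen-type step that exploits the growth properties of $G$.

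First, I would fix $x, y \in \mathbb{R}^N$ and write $a_i := u_i(x)$, $b_i := u_i(y)$ for $i = 0, 1$. The key observation is that $\sigma_t(x)$ and $\sigma_t(y)$ are exactly the $\ell^q$-norms of the two-dimensional vectors
$$X := \bigl((1-t)^{1/q} a_0,\; t^{1/q} a_1\bigr), \qquad Y := \bigl((1-t)^{1/q} b_0,\; t^{1/q} b_1\bigr).$$
Since $q > 1$, the $\ell^q$-norm obeys the triangle inequality, so its reverse form immediately yields
$$|\sigma_t(x) - \sigma_t(y)| \leq \|X - Y\|_q = \bigl[(1-t)|u_0(x) - u_0(y)|^q + t|u_1(x) - u_1(y)|^q\bigr]^{1/q}.$$
This purely algebraic step uses no information about $G$.

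Next, since $G$ is non-decreasing, it suffices to establish
$$G\bigl([(1-t)A^q + tB^q]^{1/q}\bigr) \leq (1-t)G(A) + tG(B) \quad \text{for all } A, B \geq 0,$$
applied to $A = |u_0(x) - u_0(y)|$ and $B = |u_1(x) - u_1(y)|$. Introducing the substitution $\psi(r) := G(r^{1/q})$, this inequality is exactly the convexity of $\psi$ on $[0, \infty)$. I would verify this directly: differentiating twice using \textbf{(H1)}--\textbf{(H2)} and writing $\tau := r^{1/q}$, a short computation gives
$$q^2\, r^{2 - 1/q}\, \psi''(r) = g(\tau)\left[\frac{\tau\, g'(\tau)}{g(\tau)} - (q-1)\right].$$
By hypothesis \textbf{(H4)} the bracket is bounded below by $p^{-} - q$, which is non-negative because $q \leq p^{-}$. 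Hence $\psi'' \geq 0$ on $(0, \infty)$, and convexity on the closed half-line follows by continuity at $0$. Combining this with the Minkowski step produces the asserted inequality.

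The main conceptual point, rather than a computational obstacle, is identifying the right ``hidden'' change of variables: the composition $r \mapsto G(r^{1/q})$ is precisely what matches the exponent $q$ coming from the interpolation defining $\sigma_t$ with the growth of $G$ encoded by $p^{-}$, and the sharp threshold $q \leq p^{-}$ in the hypothesis is exactly what makes this substitution convex. Once that bridge is in place, the argument decouples cleanly into an elementary $\ell^q$-norm inequality and a one-variable convexity check.
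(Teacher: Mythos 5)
Your proof is correct and takes essentially the same route as the paper: the pointwise Minkowski-type bound (which the paper simply cites from Franzina--Palatucci, Lemma 4.1, and you reprove directly via the triangle inequality for the $\ell^{q}$-norm of the vectors $X$, $Y$) followed by the convexity of $r \mapsto G\left(r^{1/q}\right)$, verified by the same second-derivative computation in which \textbf{(H4)} and $q \leq p^{-}$ make the bracket nonnegative. The only difference is that you make the first step self-contained rather than quoting the lemma, which does not change the argument.
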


\begin{proof}
	As established in \cite[Lemma 4.1]{franzina2014fractional}, we have
	\[
	\left| \sigma_t(x) - \sigma_t(y) \right| \leq \left[ (1 - t)\, \left| u_0(x) - u_0(y) \right|^{q} + t\, \left| u_1(x) - u_1(y) \right|^{q} \right]^{\frac{1}{q}}, \quad \text{for all } t \in [0, 1] \text{ and } x, y \in \mathbb{R}^N.
	\]
We first observe that the mapping \( s \mapsto G\left(s^{1/q}\right) \) is convex. Indeed, a straightforward computation, combined with the regularity assumption \( g \in C^{1}(\mathbb{R}^{+}) \) and condition \textbf{(H4)}, yields
	\[
	\dfrac{d^{2}}{d s^{2}} G\left(s^{1/q}\right) = \dfrac{s^{\frac{1}{q} - 2}}{q^{2}}\, g\left(s^{1/q}\right) \left[ 1 - q + \dfrac{s^{1/q}\, g'\left(s^{1/q}\right)}{g\left(s^{1/q}\right)} \right] \geq 0.
	\]
	Hence, for all \( a, b \geq 0 \) and \( t \in [0, 1] \), the convexity of \( s \mapsto G\left(s^{1/q}\right) \) implies that
	\[
	G\left( \left[ (1 - t) a + t b \right]^{1/q} \right) \leq (1 - t)\, G\left( a^{1/q} \right) + t\, G\left( b^{1/q} \right).
	\]
	Applying this inequality with \( a = |u_0(x) - u_0(y)|^{q} \) and \( b = |u_1(x) - u_1(y)|^{q} \) concludes the proof.
\end{proof}
\begin{proposition}[\( G \)-fractional Picone inequality]\label{proposition2}
	Let \( G \) be an \( N \)-function, and let \( u, v \) be measurable functions satisfying \( u > 0 \), \( v \geq 0 \), and \( 1 < q \leq p^{-} \). Then there exists a constant \( C = C(p^{-}, p^{+}, G(1)) > 0 \) such that the following inequality holds:
	\begin{equation}\label{equ9}
	\begin{aligned}
	&g\left( \left| u(x) - u(y) \right| \right) \dfrac{u(x) - u(y)}{\left| u(x) - u(y) \right|} 
	\left[ \dfrac{v(x)^{q}}{u(x)^{q-1}} - \dfrac{v(y)^{q}}{u(y)^{q-1}} \right] \\
	&\leq C \max\left\lbrace 
	\left( \dfrac{G\left( \left| v(x) - v(y) \right| \right)}{G(1)} \right)^{\frac{q}{p^{+}}}, 
	\left( \dfrac{G\left( \left| v(x) - v(y) \right| \right)}{G(1)} \right)^{\frac{q}{p^{-}}}
	\right\rbrace \\
	&\quad \times \max\left\lbrace 
	\left( \dfrac{G\left( \left| u(x) - u(y) \right| \right)}{G(1)} \right)^{\frac{p^{-} - q}{p^{+}}}, 
	\left( \dfrac{G\left( \left| u(x) - u(y) \right| \right)}{G(1)} \right)^{\frac{p^{+} - q}{p^{-}}}
	\right\rbrace.
	\end{aligned}
	\end{equation}
\end{proposition}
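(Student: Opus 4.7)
The plan is to reduce the $G$-fractional Picone inequality to the scalar $q$-Picone inequality for the pure $q$-power, and then upgrade it to the stated form via the pointwise modular bounds of Lemma~\ref{lemma1} together with \eqref{equ2}. By the symmetry of the claim under interchanging $x$ and $y$, I may assume $u(x)\ge u(y)$; when equality holds, $g(0)=0$ makes both sides vanish and the claim is trivial, so I also assume $\Delta u := u(x)-u(y) > 0$ henceforth.

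First, I will apply Proposition~\ref{proposition1} to the particular $N$-function $\widetilde G(t)=t^q/q$, which has $p^{-}=p^{+}=q$ so that the hypothesis $1<q\le p^{-}$ is trivially fulfilled. This gives
\[
|\sigma_t(x)-\sigma_t(y)|^q\le (1-t)|u(x)-u(y)|^q+t|v(x)-v(y)|^q,\qquad \sigma_t=\bigl((1-t)u^q+tv^q\bigr)^{1/q}.
\]
Differentiating at $t=0^+$, using $\sigma_0=u$ and $(d\sigma_t/dt)(x)|_{t=0}=\tfrac1q(v(x)^q/u(x)^{q-1}-u(x))$, and rearranging, yields the pointwise scalar $q$-Picone inequality
\[
|\Delta u|^{q-1}\left[\frac{v(x)^q}{u(x)^{q-1}}-\frac{v(y)^q}{u(y)^{q-1}}\right]\le |\Delta v|^q,
\]
with $\Delta v := v(x)-v(y)$.

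Next, I will multiply both sides by $g(|\Delta u|)/|\Delta u|^{q-1}$ and invoke the right-hand inequality in \eqref{equ2} in the form $g(t)\,t\le p^{+}G(t)$ with $t=|\Delta u|$, which produces
\[
g(|\Delta u|)\left[\frac{v(x)^q}{u(x)^{q-1}}-\frac{v(y)^q}{u(y)^{q-1}}\right]\le p^{+}\,G(|\Delta u|)\,\frac{|\Delta v|^q}{|\Delta u|^q}.
\]
Applying Lemma~\ref{lemma1} with $t=1$ yields the two-sided bound $\min\{\lambda^{p^{-}},\lambda^{p^{+}}\}G(1)\le G(\lambda)\le \max\{\lambda^{p^{-}},\lambda^{p^{+}}\}G(1)$ for every $\lambda\ge 0$, from which I will extract $|\Delta v|^q\le \max\{(G(|\Delta v|)/G(1))^{q/p^{-}},(G(|\Delta v|)/G(1))^{q/p^{+}}\}$ and, after multiplying by $G(|\Delta u|)=G(1)\bigl(G(|\Delta u|)/G(1)\bigr)$,
\[
\frac{G(|\Delta u|)}{|\Delta u|^{q}}\le G(1)\max\left\{\left(\frac{G(|\Delta u|)}{G(1)}\right)^{(p^{-}-q)/p^{-}},\left(\frac{G(|\Delta u|)}{G(1)}\right)^{(p^{+}-q)/p^{+}}\right\}.
\]

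Combining these three bounds already gives a result of the announced shape, except that the $u$-factor carries the exponents $(p^{-}-q)/p^{-}$ and $(p^{+}-q)/p^{+}$ rather than $(p^{-}-q)/p^{+}$ and $(p^{+}-q)/p^{-}$. A short case analysis in $B:=G(|\Delta u|)/G(1)\gtreqless 1$ will close the gap: for $B\ge 1$ both maxes are achieved at their largest exponent and $(p^{+}-q)/p^{-}\ge (p^{+}-q)/p^{+}$; for $B\le 1$ both maxes are achieved at their smallest exponent and $(p^{-}-q)/p^{+}\le (p^{-}-q)/p^{-}$ (which with $B\le 1$ again gives the larger value). This produces the stated inequality with, for instance, $C=p^{+}G(1)$. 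The only genuinely new ingredient is the scalar $q$-Picone step above, which is the main obstacle but follows at once by specialising Proposition~\ref{proposition1} to $\widetilde G(t)=t^q/q$ (equivalently, by invoking \cite[Lemma~4.1]{franzina2014fractional}); the remainder of the argument is routine power-counting via Lemma~\ref{lemma1} and \eqref{equ2}.
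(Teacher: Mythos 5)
Your proof is correct. The decomposition you use, however, differs from the paper's. The paper proves \eqref{equ9} by a four-way case analysis according to whether \( |u(x)-u(y)| \) and \( |v(x)-v(y)| \) each fall below or above \(1\); in each case it first bounds \( g(t) \) by \( p^{+}G(1)\,t^{p^{-}-1} \) or \( p^{+}G(1)\,t^{p^{+}-1} \) (via \eqref{equ2} and Lemma~\ref{lemma1}), then invokes the Brasco--Franzina discrete Picone inequality \cite[Proposition~4.2]{brasco2014convexity} with the exponent \(p\) chosen to be \(p^{-}\) or \(p^{+}\), and finally converts the resulting powers back to modular form. You instead isolate one clean conceptual step: the scalar \(q\)-Picone inequality
\[
|u(x)-u(y)|^{q-2}(u(x)-u(y))\left[\frac{v(x)^q}{u(x)^{q-1}}-\frac{v(y)^q}{u(y)^{q-1}}\right]\le |v(x)-v(y)|^q,
\]
which you obtain by differentiating the hidden-convexity inequality of \cite[Lemma~4.1]{franzina2014fractional} at \( t=0^{+} \) (this is exactly \cite[Proposition~4.2]{brasco2014convexity} with \(p=q\)), and only afterwards do the \(G\)-modular bookkeeping with \eqref{equ2} and Lemma~\ref{lemma1}, which requires just a two-case analysis. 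Both routes use the same three ingredients (discrete Picone, \(g(t)t\le p^{+}G(t)\), and the two-sided bound of Lemma~\ref{lemma1} at \(t=1\)), but your factorization separates the convexity content from the power-counting more cleanly and produces a simpler explicit constant \(C=p^{+}G(1)\). One word of caution on presentation: you cannot literally apply Proposition~\ref{proposition1} to \(\widetilde G(t)=t^q/q\) because that proposition requires \textbf{(H4)}, which forces \(q>2\); your parenthetical fallback of citing \cite[Lemma~4.1]{franzina2014fractional} directly is the correct way to cover the full range \(1<q\le p^{-}\), so you should make that the primary reference rather than an aside.
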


\begin{proof}
	We begin by noting that if \( u(x) = u(y) \), then inequality \eqref{equ9} holds trivially. Thus, we may assume \( u(x) \neq u(y) \). Without loss of generality, suppose \( u(x) > u(y) \), since inequality \eqref{equ9} is invariant under the permutation \( (x, y) \mapsto (y, x) \). Moreover, if 
	\[
	\dfrac{v(x)^{q}}{u(x)^{q-1}} - \dfrac{v(y)^{q}}{u(y)^{q-1}} \leq 0,
	\]
	then inequality \eqref{equ9} is again trivially satisfied. Otherwise, we distinguish between the following four cases:
	
	\medskip
	\noindent
	\textbf{Case 1:} \( |u(x) - u(y)| < 1 \) and \( |v(x) - v(y)| < 1 \). In this case, using \eqref{equ2} and the fractional Picone inequality \cite[Proposition 4.2]{brasco2014convexity}, we obtain
	\begin{equation*}
	\begin{aligned}
	&g\left( |u(x) - u(y)| \right) 
	\dfrac{u(x) - u(y)}{|u(x) - u(y)|} 
	\left[ \dfrac{v(x)^{q}}{u(x)^{q-1}} - \dfrac{v(y)^{q}}{u(y)^{q-1}} \right] 
	\\[4pt]
	&\leq 
	p^{+} G(1) |u(x) - u(y)|^{p^{-} - 2} (u(x) - u(y)) 
	\left[ \dfrac{v(x)^{q}}{u(x)^{q-1}} - \dfrac{v(y)^{q}}{u(y)^{q-1}} \right] 
	\\[4pt]
	&\leq 
	p^{+} G(1)  |v(x) - v(y)|^{q} \, |u(x) - u(y)|^{p^{-} - q}
	\\[4pt]
	&\leq 
	p^{+} G(1)^{1 + \frac{p^{-}}{p^{+}}} 
	\left( G(|v(x) - v(y)|) \right)^{\frac{q}{p^{+}}} 
	\left( G(|u(x) - u(y)|) \right)^{\frac{p^{-} - q}{p^{+}}}.
	\end{aligned}
	\end{equation*}
	
	\medskip
	\noindent
	\textbf{Case 2:} \( |u(x) - u(y)| > 1 \) and \( |v(x) - v(y)| > 1 \). Proceeding similarly, we obtain
	\begin{equation*}
	\begin{aligned}
	&g\left( |u(x) - u(y)| \right) 
	\dfrac{u(x) - u(y)}{|u(x) - u(y)|} 
	\left[ \dfrac{v(x)^{q}}{u(x)^{q-1}} - \dfrac{v(y)^{q}}{u(y)^{q-1}} \right] 
	\\[4pt]
	&\leq 
	p^{+} G(1) |u(x) - u(y)|^{p^{+} - 2} (u(x) - u(y)) 
	\left[ \dfrac{v(x)^{q}}{u(x)^{q-1}} - \dfrac{v(y)^{q}}{u(y)^{q-1}} \right] 
	\\[4pt]
	&\leq 
	p^{+} G(1)  |v(x) - v(y)|^{q} \, |u(x) - u(y)|^{p^{+} - q}
	\\[4pt]
	&\leq 
	p^{+} G(1)^{1 + \frac{p^{+}}{p^{-}}} 
	\left( G(|v(x) - v(y)|) \right)^{\frac{q}{p^{-}}} 
	\left( G(|u(x) - u(y)|) \right)^{\frac{p^{+} - q}{p^{-}}}.
	\end{aligned}
	\end{equation*}
	
	\medskip
	\noindent
	\textbf{Case 3:} \( |u(x) - u(y)| > 1 \) and \( |v(x) - v(y)| < 1 \). By the same arguments,
	\begin{equation*}
	\begin{aligned}
	&g\left( |u(x) - u(y)| \right) 
	\dfrac{u(x) - u(y)}{|u(x) - u(y)|} 
	\left[ \dfrac{v(x)^{q}}{u(x)^{q-1}} - \dfrac{v(y)^{q}}{u(y)^{q-1}} \right] 
	\\[4pt]
	&\leq 
	p^{+} G(1) |u(x) - u(y)|^{p^{+} - 2} (u(x) - u(y)) 
	\left[ \dfrac{v(x)^{q}}{u(x)^{q-1}} - \dfrac{v(y)^{q}}{u(y)^{q-1}} \right] 
	\\[4pt]
	&\leq 
	p^{+} G(1)  |v(x) - v(y)|^{q} \, |u(x) - u(y)|^{p^{+} - q}
	\\[4pt]
	&\leq 
	p^{+} G(1)^{\frac{(p^{+})^{2} -q(p^{+} - p^{-})}{p^{+} p^{-}}} 
	\left( G(|v(x) - v(y)|) \right)^{\frac{q}{p^{+}}} 
	\left( G(|u(x) - u(y)|) \right)^{\frac{p^{+} - q}{p^{-}}}.
	\end{aligned}
	\end{equation*}
	
	\medskip
	\noindent
	\textbf{Case 4:} \( |u(x) - u(y)| < 1 \) and \( |v(x) - v(y)| > 1 \). Again, we deduce
	\begin{equation*}
	\begin{aligned}
	&g\left( |u(x) - u(y)| \right) 
	\dfrac{u(x) - u(y)}{|u(x) - u(y)|} 
	\left[ \dfrac{v(x)^{q}}{u(x)^{q-1}} - \dfrac{v(y)^{q}}{u(y)^{q-1}} \right] 
	\\[4pt]
	&\leq 
	p^{+} G(1) |u(x) - u(y)|^{p^{-} - 2} (u(x) - u(y)) 
	\left[ \dfrac{v(x)^{q}}{u(x)^{q-1}} - \dfrac{v(y)^{q}}{u(y)^{q-1}} \right] 
	\\[4pt]
	&\leq 
	p^{+} G(1)  |v(x) - v(y)|^{q} \, |u(x) - u(y)|^{p^{-} - q}
	\\[4pt]
	&\leq 
	p^{+} G(1)^{\frac{q(p^{+} - p^{-}) + (p^{-})^{2}}{p^{+} p^{-}}} 
	\left( G(|v(x) - v(y)|) \right)^{\frac{q}{p^{-}}} 
	\left( G(|u(x) - u(y)|) \right)^{\frac{p^{-} - q}{p^{+}}}.
	\end{aligned}
	\end{equation*}
	
	\medskip
	\noindent
	Combining the estimates obtained in all four cases concludes the proof.
\end{proof}

\noindent
We recall the definition of strict ray-convexity.

\begin{definition} \label{defn1} \rm
	Let \( X \) be a real vector space, and let \( C \subset X \) be a nonempty convex cone.  
	A functional \( \mathcal{W} : C \to \mathbb{R} \) is said to be  ray-strictly convex (respectively, strictly convex) if
	\[
	\mathcal{W}((1 - t) v_1 + t v_2) \leq (1 - t)\, \mathcal{W}(v_1) + t\, \mathcal{W}(v_2) \quad \text{for all } v_1, v_2 \in C \text{ and all } t \in (0, 1),
	\]
	with strict inequality unless \( \frac{v_1}{v_2} \equiv c > 0 \) (unless \( v_1 \equiv v_2 \), respectively).
\end{definition}

\noindent
By Proposition~\ref{proposition1}, the set
\[
\dot{V}_{+}^{q} := \left\{ u : \Omega \to (0, \infty) \;\middle|\; u^{\frac{1}{q}} \in W^{s, G}_0(\Omega) \right\}, \quad \text{for } 1 < q \leq p^{-},
\]
is a convex cone. That is, for all \( \lambda \in (0, 1) \) and all \( f, g \in \dot{V}_{+}^{q} \), it holds that \( \lambda f + (1 - \lambda) g \in \dot{V}_{+}^{q} \).

\begin{proposition} \label{pro3}
	Let \( G \) be an \( N \)-function satisfying \textbf{(H4)}, and suppose that \( 1 < q \leq p^{-} \). Then the functional
	\[
	\mathcal{W} : \dot{V}_{+}^{q} \to \mathbb{R}_{+}, \quad 
	\mathcal{W}(w) := \int_{\mathbb{R}^N} \int_{\mathbb{R}^N} 
	G\left( \frac{ \left| w(x)^{1/q} - w(y)^{1/q} \right| }{ |x - y|^{s} } \right) 
	\frac{dx \, dy}{|x - y|^{N}},
	\]
	is ray-strictly convex on \( \dot{V}_{+}^{q} \). Moreover, if \( q \neq p^{-} \), then \( \mathcal{W} \) is strictly convex on \( \dot{V}_{+}^{q} \).
\end{proposition}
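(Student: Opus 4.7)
The strategy is to lift the pointwise $G$-fractional hidden convexity of Proposition~\ref{proposition1} to the level of the double integral defining $\mathcal{W}$, and then to inspect when equality can occur. Given $w_0, w_1 \in \dot{V}_{+}^{q}$, set $u_i := w_i^{1/q} \in W^{s,G}_{0}(\Omega)$ (so $u_i > 0$ in $\Omega$ and $u_i \equiv 0$ in $\mathbb{R}^{N} \setminus \Omega$) and $\sigma_t := ((1-t)w_0 + t w_1)^{1/q} = ((1-t) u_0^{q} + t u_1^{q})^{1/q}$. The $\ell^{q}$-Minkowski-type inequality from \cite[Lemma 4.1]{franzina2014fractional}, used inside the proof of Proposition~\ref{proposition1}, gives, after dividing by $|x-y|^{sq}$,
\[
\left( \frac{|\sigma_t(x) - \sigma_t(y)|}{|x-y|^{s}} \right)^{q} \leq (1-t) \left( \frac{|u_0(x) - u_0(y)|}{|x-y|^{s}} \right)^{q} + t \left( \frac{|u_1(x) - u_1(y)|}{|x-y|^{s}} \right)^{q}.
\]
Applying the nondecreasing map $\Psi(s) := G(s^{1/q})$, whose convexity was verified inside the proof of Proposition~\ref{proposition1}, and integrating against $dx\,dy / |x-y|^{N}$ over $\mathbb{R}^{N} \times \mathbb{R}^{N}$ delivers the convexity of $\mathcal{W}$.

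For the strict convexity statement (the case $1 < q < p^{-}$), I would return to the second-derivative estimate computed in the proof of Proposition~\ref{proposition1}: assumption \textbf{(H4)} in fact yields $\Psi''(s) \geq \frac{s^{1/q - 2}}{q^{2}}\, g(s^{1/q}) (p^{-} - q) > 0$ for every $s > 0$, so that $\Psi$ is \emph{strictly} convex on $(0, \infty)$. Equality in the integrated chain then forces $|u_0(x) - u_0(y)|^{q} = |u_1(x) - u_1(y)|^{q}$ for a.e.\ $(x,y) \in \mathbb{R}^{N} \times \mathbb{R}^{N}$; fixing $y$ in the exterior of $\Omega$ (where $u_0(y) = u_1(y) = 0$) then yields $u_0 = u_1$ a.e.\ on $\Omega$ by positivity, whence $w_0 \equiv w_1$.

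For ray-strict convexity in the full range $1 < q \leq p^{-}$, the endpoint $q = p^{-}$ cannot be handled by the convexity of $\Psi$ alone (for $G(t) = t^{p^{-}}/p^{-}$ the map $\Psi$ is even affine), and one must instead appeal to the equality case of \cite[Lemma 4.1]{franzina2014fractional} itself. Pointwise equality there is the saturation of a reverse triangle inequality in a two-dimensional weighted $\ell^{q}$-space, which is equivalent to collinearity of the vectors $(u_0(x), u_1(x))$ and $(u_0(y), u_1(y))$, i.e.\ to $u_0(x) u_1(y) = u_0(y) u_1(x)$. Since $u_0, u_1 > 0$ on $\Omega$, requiring this for almost every $(x,y) \in \Omega \times \Omega$ forces $u_0 / u_1$ to be essentially constant on $\Omega$, equivalently $w_0 / w_1 \equiv c > 0$, which matches the definition of ray-strict convexity.

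The principal technical obstacle will be to pin down the equality case in \cite[Lemma 4.1]{franzina2014fractional} with sufficient precision to recover the desired proportionality $w_0 / w_1 \equiv c$; once this is secured, all remaining steps reduce to a careful translation between pointwise equality cases, integrated equality, and the exterior vanishing condition built into $W^{s, G}_{0}(\Omega)$.
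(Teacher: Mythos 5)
Your proposal is correct, and it follows a genuinely different route from the paper's, particularly in the equality analysis. Both proofs start by integrating the pointwise inequality of Proposition~\ref{proposition1} to get convexity, and both reduce integrated equality to pointwise equality a.e.\ by positivity of the integrand. But from there the paths diverge. The paper introduces an auxiliary $N$-function $H$ with $H^{-1}(s)=G(s^{1/p^-})$ and invokes the equality case in \cite[Proposition~4.1]{brasco2014convexity} to transfer the pointwise $G$-identity into a pure $p^-$-power identity, and only then splits on $q=p^-$ versus $q<p^-$ (the former via the $\boldsymbol{\ell}^q$ triangle-inequality equality case, the latter via strict convexity of $\tau\mapsto\tau^{p^-/q}$). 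You instead dissect the two-step pointwise chain (the $\boldsymbol{\ell}^q$ reverse-triangle step from \cite[Lemma~4.1]{franzina2014fractional}, followed by convexity of $\Psi(s)=G(s^{1/q})$) and read off the equality case of each step directly: for $q<p^-$ the lower bound in \textbf{(H4)} gives $\Psi''(s)\ge \tfrac{s^{1/q-2}}{q^2}g(s^{1/q})(p^--q)>0$, so $\Psi$ is genuinely strictly convex and equality forces $|u_0(x)-u_0(y)|=|u_1(x)-u_1(y)|$ a.e., whence $u_0=u_1$ by Fubini and the exterior vanishing; for $q=p^-$ you correctly observe that $\Psi$ may be affine and instead extract equality in the reverse-triangle step (available since $\Psi$ is strictly increasing), whose saturation is collinearity, yielding $u_0/u_1\equiv c$ on $\Omega$. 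This is a cleaner and more self-contained argument: it makes the role of the lower bound in \textbf{(H4)} transparent and avoids the exponent-normalization detour through \cite{brasco2014convexity}.

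The one concern you flag, namely ``pinning down'' the equality case of \cite[Lemma~4.1]{franzina2014fractional}, is not a real obstacle. That lemma is the reverse triangle inequality in a two-dimensional weighted $\boldsymbol{\ell}^q$-space, and since $\boldsymbol{\ell}^q$ is strictly convex for $1<q<\infty$, equality occurs only when the vectors $a=((1-t)^{1/q}u_0(x),\,t^{1/q}u_1(x))$ and $b=((1-t)^{1/q}u_0(y),\,t^{1/q}u_1(y))$ are positively proportional (or one vanishes, which is excluded on $\Omega\times\Omega$ by positivity). This is exactly the fact the paper also relies on in its case $p^-=q$, so no additional machinery is required on your side. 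You should, however, state explicitly that $\Psi$ is strictly increasing on $(0,\infty)$ — needed to pass equality through the monotone step of the chain — which follows from $\Psi'(s)=\tfrac1q s^{1/q-1}g(s^{1/q})>0$ for $s>0$. With that included, your plan is complete.
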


\begin{proof}
	According to Definition~\ref{defn1}, let \( w_1, w_2 \in \dot{V}_{+}^q \) and \( t \in [0, 1] \). Define \( w = (1 - t) w_1 + t w_2 \). Then, by Proposition~\ref{proposition1}, we have
	\begin{equation*}
	\mathcal{W}(w) \leq (1 - t)\, \mathcal{W}(w_1) + t\, \mathcal{W}(w_2).
	\end{equation*}
	Assume now that equality holds. Then, for almost every \( x, y \in \mathbb{R}^N \), we have
	\[
	G\left( \frac{ \left| w(x)^{1/q} - w(y)^{1/q} \right| }{ |x - y|^{s} } \right) 
	= (1 - t)\, G\left( \frac{ \left| w_{1}(x)^{1/q} - w_{1}(y)^{1/q} \right| }{ |x - y|^{s} } \right) 
	+ t\, G\left( \frac{ \left| w_{2}(x)^{1/q} - w_{2}(y)^{1/q} \right| }{ |x - y|^{s} } \right).
	\]
	Define an \( N \)-function \( H \) by \( H^{-1}(s) = G(s^{1/p^{-}}) \). Since the mapping \( s \mapsto G(s^{1/p^{-}}) \) is convex, it follows that \( H \) is concave. Then, by \cite[Proposition 4.1]{brasco2014convexity}, we obtain
	\[
	\left| w(x)^{1/q} - w(y)^{1/q} \right|^{p^{-}} 
	= (1 - t)\, \left| w_{1}(x)^{1/q} - w_{1}(y)^{1/q} \right|^{p^{-}} 
	+ t\, \left| w_{2}(x)^{1/q} - w_{2}(y)^{1/q} \right|^{p^{-}}.
	\]
	If \( p^{-} = q \), then we deduce
	\[
	\left| \| a \|_{\boldsymbol{\ell}^{q}} - \| b \|_{\boldsymbol{\ell}^{q}} \right|^{q}
	= \| a - b \|_{\boldsymbol{\ell}^{q}}^{q} \quad \text{for a.e. } x, y \in \mathbb{R}^N,
	\]
	where \( \| \cdot \|_{\boldsymbol{\ell}^{q}} \) denotes the \( \boldsymbol{\ell}^{q} \)-norm in \( \mathbb{R}^2 \), and
	\[
	a = \left( ((1 - t)w_1(x))^{1/q}, (t w_2(x))^{1/q} \right), \quad
	b = \left( ((1 - t)w_1(y))^{1/q}, (t w_2(y))^{1/q} \right).
	\]
	Since \( q > 1 \), there exists a constant \( c > 0 \) such that \( w_1 = c w_2 \) a.e. in \( \mathbb{R}^N \). Thus, \( \mathcal{W} \) is ray-strictly convex on \( \dot{V}_{+}^q \). On the other hand, if \( p^{-} \neq q \), then by the strict convexity of the mapping \( \tau \mapsto \tau^{\frac{p^{-}}{q}} \) on \( \mathbb{R}^{+} \), it follows that \( w_1 = w_2 \) a.e. in \( \mathbb{R}^N \). Therefore, \( \mathcal{W} \) is strictly convex on \( \dot{V}_{+}^q \).
\end{proof}

\noindent
We now establish the following extension of the D\'{\i}az and Saa inequality, involving the \( g \)-Laplacian operator, which plays a fundamental role in the analysis of the uniqueness of problem~\eqref{GP}.
\begin{lemma} \label{Lem2}
	Let \( G \) be an \( N \)-function satisfying \textbf{(H4)}, and suppose that \( 1 < q \leq p^{-} \). Then the following inequality holds in the sense of distributions:
	\begin{equation} \label{equ0}
	\begin{aligned}
	& \int_{\mathbb{R}^{N}} \int_{\mathbb{R}^{N}}  
	g\left( \frac{\left| u(x) - u(y)\right| }{|x - y|^{s}} \right)  \dfrac{ u(x) - u(y)}{\left|  u(x) - u(y)\right| }
	\left( \frac{u(x)^{q} - v(x)^{q}}{u(x)^{q - 1}} 
	- \frac{u(y)^{q} - v(y)^{q}}{u(y)^{q - 1}} \right) 
	\frac{dx \, dy}{|x - y|^{N + s}} \\[4pt]
	& \quad + \int_{\mathbb{R}^{N}} \int_{\mathbb{R}^{N}}  
	g\left( \frac{\left| v(x) - v(y)\right| }{|x - y|^{s}} \right)\dfrac{v(x) - v(y)}{\left|v(x) - v(y)\right| } 
	\left( \frac{v(x)^{q} - u(x)^{q}}{v(x)^{q - 1}} 
	- \frac{v(y)^{q} - u(y)^{q}}{v(y)^{q - 1}} \right) 
	\frac{dx \, dy}{|x - y|^{N + s}} \geq 0,
	\end{aligned}
	\end{equation}
	for all pairs \( u, v \in W^{s,G}_{0}(\Omega) \) such that \( u > 0 \) and \( v > 0 \) a.e. in \( \Omega \), and both \( u/v \) and \( v/u \) belong to \( L^{\infty}(\Omega) \).  Moreover, if equality holds in \eqref{equ0}, then the following assertions are satisfied:
	\begin{itemize}
		\item[(1)] \( u/v \equiv \text{const} > 0 \) a.e. in \( \Omega \).
		\item[(2)] If in addition \( q\neq p^{-}  \), then \( u \equiv v \) a.e. in \( \Omega \).
	\end{itemize}
\end{lemma}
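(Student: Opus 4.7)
My plan is to recognize \eqref{equ0} as the monotonicity relation for the (ray-strictly) convex functional $\mathcal{W}$ of Proposition \ref{pro3} evaluated at $w_1 := u^q$ and $w_2 := v^q$ in $\dot{V}_{+}^{q}$. The hypotheses $u,v>0$ a.e., $u,v\in W^{s,G}_{0}(\Omega)$, and $u/v,\,v/u\in L^{\infty}(\Omega)$ are precisely what is needed so that $w_1, w_2 \in \dot{V}_{+}^{q}$ and the whole segment $w_t := (1-t)u^{q} + tv^{q}$, $t\in[0,1]$, remains in $\dot{V}_{+}^{q}$ (indeed, $w_t^{1/q}$ is pinched between positive multiples of $u$, hence lies in $W^{s,G}_{0}(\Omega)$), with $\mathcal{W}(w_t)$ finite by the hidden convexity in Proposition \ref{proposition1}.

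The first step is to establish the one-sided Gateaux differentiability of $\mathcal{W}$ along the segment and to derive the formula
\begin{equation*}
\mathcal{W}'(u^{q})[v^{q}-u^{q}] = \dfrac{1}{q}\int_{\mathbb{R}^{N}}\!\int_{\mathbb{R}^{N}} g\!\left(\dfrac{|u(x)-u(y)|}{|x-y|^{s}}\right)\dfrac{u(x)-u(y)}{|u(x)-u(y)|}\!\left(\dfrac{v(x)^{q}-u(x)^{q}}{u(x)^{q-1}}-\dfrac{v(y)^{q}-u(y)^{q}}{u(y)^{q-1}}\right)\!\dfrac{dx\,dy}{|x-y|^{N+s}},
\end{equation*}
together with the analogous formula for $\mathcal{W}'(v^{q})[v^{q}-u^{q}]$. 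This reduces to a pointwise chain-rule computation applied to $t\mapsto G(|\sigma_t(x)-\sigma_t(y)|/|x-y|^{s})$ with $\sigma_t = w_t^{1/q}$, followed by an interchange of differentiation and integration. The main technical obstacle lies in that interchange: one must produce an $L^{1}$ dominant for the difference quotient that is uniform in $t$. I would combine the $\Delta_{2}$ condition \eqref{equ1} together with Lemma \ref{lemma1}, the hidden-convexity bound of Proposition \ref{proposition1} to control $\Phi_{s,G}(\sigma_t)$ uniformly in $t$, and the $L^{\infty}$ bounds on $u/v$ and $v/u$, which render quantities like $v^{q}/u^{q-1}$ pointwise comparable to $u$ and $v$, before invoking dominated convergence.

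Given the two derivative formulas, the inequality follows in one line by summing the supporting-hyperplane inequalities
\begin{equation*}
\mathcal{W}(v^{q}) \geq \mathcal{W}(u^{q}) + \mathcal{W}'(u^{q})[v^{q}-u^{q}], \qquad \mathcal{W}(u^{q}) \geq \mathcal{W}(v^{q}) + \mathcal{W}'(v^{q})[u^{q}-v^{q}],
\end{equation*}
which yields $\mathcal{W}'(v^{q})[v^{q}-u^{q}] - \mathcal{W}'(u^{q})[v^{q}-u^{q}] \geq 0$. Substituting the explicit derivative formulas identifies the left-hand side with $(A+B)/q$, where $A$ and $B$ denote the two double integrals appearing in \eqref{equ0}, thus proving the inequality.

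For the equality case, assume $A+B=0$. Summing the two supporting-hyperplane inequalities above shows that both must in fact be equalities. Setting $\phi(t) := \mathcal{W}(w_t)$, this forces the convex function $\phi$ on $[0,1]$ to coincide with its chord, i.e.\ $\mathcal{W}((1-t)u^{q}+t v^{q}) = (1-t)\mathcal{W}(u^{q}) + t\mathcal{W}(v^{q})$ for every $t\in[0,1]$. The ray-strict convexity of $\mathcal{W}$ from Proposition \ref{pro3} then forces $u^{q} = c\, v^{q}$ a.e.\ for some $c>0$, so that $u/v$ is a positive constant. When in addition $q \neq p^{-}$, the strict convexity upgrade in Proposition \ref{pro3} yields $u^{q}=v^{q}$ a.e., and hence $u \equiv v$, completing the proof.
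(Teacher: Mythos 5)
Your proposal is correct and takes essentially the same route as the paper: both recognize \eqref{equ0} as the monotonicity of the derivative of the convex map $\theta \mapsto \mathcal{W}\big((1-\theta)u^{q} + \theta v^{q}\big)$ furnished by Propositions~\ref{proposition1}--\ref{pro3}, and both resolve the equality case via ray-strict (or, for $q \neq p^{-}$, strict) convexity. The paper phrases the inequality as $\Phi'(0) \leq \Phi'(1)$ while you sum two supporting-hyperplane inequalities; these are the same fact about convex functions, and the preliminary integrability/domination issues you flag are handled in the paper by the opening Lagrange Mean Value Theorem estimate showing $u^{q}/v^{q-1},\,v^{q}/u^{q-1} \in W^{s,G}_0(\Omega)$.
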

\begin{proof}
	First, note that both integrals above are well-defined as Lebesgue integrals. Indeed, there exists a constant \( M > 0 \) such that
	\[
	0 \leq \frac{u^{q}}{v^{q-1}} = \left( \frac{u}{v} \right)^{q-1} u \leq M u, \quad \text{and} \quad 0 \leq \frac{v^{q}}{u^{q-1}} = \left( \frac{v}{u} \right)^{q-1} v \leq M v.
	\]
	By Lemmas~\ref{lemma1} and~\ref{lemma2}, it follows that \( \frac{u^{q}}{v^{q-1}} \in L^{G}(\mathbb{R}^{N}) \) and \( \frac{v^{q}}{u^{q-1}} \in L^{G}(\mathbb{R}^{N}) \), and both functions vanish outside \( \Omega \). Applying the Lagrange Mean Value Theorem yields:
	\begin{align*}
	&\left| \frac{u^q(x)}{v^{q-1}(x)} - \frac{u^q(y)}{v^{q-1}(y)} \right| 
	\leq \left| \frac{u^q(x)}{v^{q-1}(x)} - \frac{u^q(y)}{v^{q-1}(x)} \right| + \left| \frac{u^q(y)}{v^{q-1}(x)} - \frac{u^q(y)}{v^{q-1}(y)} \right|\\
	&\quad \leq \frac{q\, \max\left\lbrace u^{q-1}(x), u^{q-1}(y) \right\rbrace}{v^{q-1}(x)} \left| u(x) - u(y) \right|   + \frac{u^{q}(y)\, (q-1)\, \max\left\lbrace v^{q-2}(x), v^{q-2}(y) \right\rbrace}{v^{q-1}(x)\, v^{q-1}(y)} \left| v(x) - v(y) \right| \\
	&\quad \leq C\left( q, \left\| u/v\right\|_{L^{\infty}(\Omega)} \right) \left(1/2\left| u(x) - u(y) \right| + 1/2\left| v(x) - v(y) \right| \right),
	\end{align*}
	for some constant \( C > 0 \). Therefore, again by Lemmas~\ref{lemma1} and~\ref{lemma2}, we conclude that \( \frac{u^{q}}{v^{q-1}}, \frac{v^{q}}{u^{q-1}} \in W^{s,G}_{0}(\Omega) \).
	
	\noindent
	Now, consider \( u, v \in W^{s, G}_0(\Omega) \) such that \( u > 0 \) and \( v > 0 \) a.e.\ in \( \Omega \), and let \( \theta \in (0,1) \). Define
	\[
	w := (1-\theta) u^q + \theta v^q.
	\]
	By Proposition~\ref{proposition1}, it is easily seen that \( w \in \dot{V}_{+}^q \). Hence, by Proposition~\ref{pro3}, the function
	\[
	\theta \mapsto \Phi(\theta) := \mathcal{W}(w) = \mathcal{W}\left((1 - \theta) u^q + \theta v^q \right)
	\]
	is convex and differentiable on \( [0, 1] \). For \( \theta \in (0,1) \), we compute:
{\small 	\begin{align*}
	\Phi'(\theta)
	&= \int_{\mathbb{R}^{2N} \setminus (\Omega^c \times \Omega^c)}
	g\left( \frac{\left| w(x)^{1/q} - w(y)^{1/q}\right| }{|x - y|^{s}} \right)\dfrac{w(x)^{1/q} - w(y)^{1/q}}{\left| w(x)^{1/q} - w(y)^{1/q}\right| }
	\left( \frac{v(x)^q - u(x)^q}{w(x)^{1 - 1/q}} - \frac{v(y)^q - u(y)^q}{w(y)^{1 - 1/q}} \right)
	\frac{dx \, dy}{|x - y|^{N + s}}.
	\end{align*}}
	Using the convexity of \( \Phi \), and the identities \( w = u^q \) when \( \theta = 0 \), and \( w = v^q \) when \( \theta = 1 \), we obtain:
	\[
	\Phi'(0) = \lim_{\theta \to 0^+} \Phi'(\theta) \leq \lim_{\theta \to 1^-} \Phi'(\theta) = \Phi'(1),
	\]
	which is equivalent to inequality~\eqref{equ0}. Finally, assume that equality holds in~\eqref{equ0}. Since \( \Phi' : (0,1) \to \mathbb{R} \) is monotone, it follows that \( \Phi'(\theta) = \text{const} \) on \( (0,1) \), and hence \( \Phi : [0,1] \to \mathbb{R} \) is linear:
	\[
	\Phi(\theta) = \mathcal{W}(w) = (1-\theta) \Phi(0) + \theta \Phi(1) = (1-\theta) \mathcal{W}(u^q) + \theta \mathcal{W}(v^q),
	\]
	for all \( \theta \in [0,1] \). We deduce that \( u \equiv c v \) for some \( c > 0 \). Moreover, if \( q \neq p^{-} \), then \( u \equiv v \) by Proposition~\ref{pro3}.
\end{proof}
\begin{remark}
	An alternative proof of Theorem~4.2 can be obtained by employing the \( G \)-fractional Picone inequality stated in Proposition~\ref{proposition2}, together with Young's inequality. This approach leads directly to the derivation of \eqref{equ0}. We emphasize that the \( G \)-fractional Picone inequality, being a pointwise estimate, is stronger than the D\'{\i}az and Saa inequality \eqref{equ0}, and is of independent interest.
\end{remark}

\noindent
Before presenting our main results, we define the set \( \textbf{FC} \), which consists of the following class of functions:\\[4pt]
\[
\textbf{FC} = \left\lbrace 
\Psi : (0, \infty) \to (0, \infty) \;\middle|\;
\begin{array}{l}
\text{(i) } \Psi \text{ is of class } C^{1} \text{ and strictly convex}, \\[4pt]
\text{(ii) } \Psi' \text{ is increasing on } (0, \infty), \\[4pt]
\text{(iii) there exist constants } \theta_1, \theta_2 \geq 0 \text{ such that} \\[2pt]
\qquad \theta_1 \dfrac{\Psi(x)}{x} \leq \Psi'(x) \leq \theta_2 \dfrac{\Psi(x)}{x} \quad \forall x > 0.
\end{array}
\right\rbrace.
\]

\subsection{Statements of main results} 
We first observe that, under the assumptions imposed on the function \( g \), the mapping \( g : \mathbb{R} \to \mathbb{R} \) is injective. Therefore, by the definition of the complementary function \( \overline{G} \) associated with \( G \), we obtain the following estimate:
\begin{equation}\label{3equ03}
(p^{-} - 1)\, G(t) \leq \overline{G}(g(t)) \leq (p^{+} - 1)\, G(t) , \quad\text{ for all }  t \geq 0.
\end{equation}
Accordingly, we introduce the notions of weak sub-solutions, super-solutions, and solutions to problem~\eqref{GP}:

\begin{definition}\label{definition1}
We say that \( u \in W^{s, G}_{\mathrm{loc}}(\Omega) \) is a \textbf{weak super-solution} to \eqref{GP} if the following conditions hold:
\begin{enumerate}
	\item[(i)] There exists a function \( \Psi \in \textbf{FC} \) such that \( \Psi(u) \in W^{s, G}_{0}(\Omega) \).
	
	\item[(ii)] For every compact set \( K \Subset \Omega \), there exists a constant \( C(K) > 0 \) such that
	\[
	u \geq C(K) \quad \text{a.e. in } K.
	\]
	
	\item[(iii)] For all test functions \( \varphi \in W^{s, G}_{0}(\Omega) \cap L^{\infty}_{c}(\Omega) \) with \( \varphi \geq 0 \), the following inequality holds:
\[
\int_{\mathbb{R}^{N}} \int_{\mathbb{R}^{N}} 
g\left( \frac{\left| u(x) - u(y) \right|}{\left| x - y \right|^{s}} \right)
\dfrac{u(x) - u(y)}{\left| u(x) - u(y) \right|} 
\dfrac{\varphi(x) - \varphi(y)}{\left| x - y \right|^{N + s}} \, dx \, dy 
\geq \int_{\Omega} \left( f(x)\, u^{-\alpha} + k(x)\, u^{\beta} \right) \varphi(x) \, dx.
\]
\end{enumerate}
	If \( u \) satisfies the reverse inequality in (iii), then it is called a \textbf{weak sub-solution} to problem~\eqref{GP}. A function \( u \) that satisfies the requirements of both a weak sub and super-solution is referred to as a \textbf{weak solution} to~\eqref{GP}.
\end{definition}

\begin{remark}\label{remark2}
An important observation regarding Definition~\ref{definition1} is that the solution \( u \) generally does not belong to the space \( W^{s, G}_{0}(\Omega) \). Moreover, it is important to emphasize that the space \( W^{s, G}_{\mathrm{loc}}(\Omega) \) does not admit a trace operator. For this reason, we adopt the following interpretation of the Dirichlet boundary condition in a generalized sense (see \cite[Definition 1.3]{canino2017nonlocal}):

\medskip
\noindent
\textbf{``}We say that \( u \leq 0 \) on \( \partial \Omega \) if \( u = 0 \) in \( \mathbb{R}^{N} \setminus \Omega \) and \( (u - \epsilon)^{+} \in W^{s, G}_{0}(\Omega) \) for every \( \epsilon > 0 \). Moreover, we say that \( u = 0 \) on \( \partial \Omega \) if \( u \geq 0 \) and \( u \leq 0 \) on \( \partial \Omega \).\textbf{''}

\medskip	
\noindent It is crucial to note that Definition \ref{definition1} - \textbf{(i)} ensures that the solution satisfies the conditions of this definition. In particular, since \( \Psi(u) \in W^{s,G}_{0}(\Omega) \) for a function \( \Psi \in \textbf{FC} \), and given any \( \epsilon > 0 \), we define
\[
\mathcal{S}_{\epsilon} := \operatorname{supp} (u - \epsilon)^{+}, \quad \text{and} \quad \mathcal{Q}_{\epsilon} := (\mathbb{R}^{N} \times \mathbb{R}^{N}) \setminus (\mathcal{S}_{\epsilon}^{c} \times \mathcal{S}_{\epsilon}^{c}).
\]
Then, invoking \cite[Lemma 3.11]{bal2024singular} and applying Lemma~\ref{lemma2}, we obtain
\begin{align*}
\min\left\{ \left\| (u - \epsilon)^{+} \right\|_{W^{s,G}_{0}(\Omega)}^{p^{-}},\, \left\| (u - \epsilon)^{+} \right\|_{W^{s,G}_{0}(\Omega)}^{p^{+}} \right\}
& \leq \int_{\mathbb{R}^{N}}  \int_{\mathbb{R}^{N}}  G\left( \frac{\left| (u - \epsilon)^{+}(x) - (u - \epsilon)^{+}(y) \right|}{\left| x - y \right|^{s}} \right) \frac{dx\,dy}{\left| x - y \right|^{N}} \\[4pt]
& \leq \iint_{\mathcal{Q}_{\epsilon}} G\left( \frac{\left| u(x) - u(y) \right|}{\left| x - y \right|^{s}} \right) \frac{dx\,dy}{\left| x - y \right|^{N}} \\[4pt]
& \leq C(\epsilon) \iint_{\mathcal{Q}_{\epsilon}} G\left( \frac{\left| \Psi(u(x)) - \Psi(u(y)) \right|}{\left| x - y \right|^{s}} \right) \frac{dx\,dy}{\left| x - y \right|^{N}} \\[4pt]
& \leq \max\left\{ \left\| \Psi(u) \right\|_{W^{s,G}_{0}(\Omega)}^{p^{-}},\, \left\| \Psi(u) \right\|_{W^{s,G}_{0}(\Omega)}^{p^{+}} \right\} < \infty.
\end{align*}
As a consequence, we conclude that \( (u - \epsilon)^{+} \in W^{s,G}_{0}(\Omega) \) for every \( \epsilon > 0 \).
\end{remark}

\noindent
Our main result concerning the weak comparison principle for problem~\eqref{GP} in $W^{s, G}_{\text{loc}}(\Omega)$ is formulated below.
\begin{theorem}\label{Theorem1}
\noindent
Let \( G \) be an \( N \)-function satisfying conditions  \textbf{(H1)}–\textbf{(H4)}. Suppose that \( k \in L^{\overline{K}}(\Omega) \), where \( \overline{K} \) denotes the complementary function of the \( N \)-function \( K \), defined by
\[
K(t) := G_{*}\left(t^{\frac{1}{\beta + 1}}\right).
\]
In addition, assume that one of the following  assumptions holds:
\begin{itemize}
	\item[\textbf{(A1)}] \( \alpha > 1 \) and \( f \in L^{1}(\Omega) \).
	\item[\textbf{(A2)}] \( \alpha = 1 \) and \( f \in L^{\overline{G_{*}}}(\Omega) \).
	\item[\textbf{(A3)}] \( \alpha < 1 \) and \( f \in L^{\overline{\mathfrak{F}}}(\Omega) \), where \( \overline{\mathfrak{F}} \) denotes the complementary function of the \( N \)-function \( \mathfrak{F} \), defined by
\begin{equation} \label{3equ0}
\mathfrak{F}(t) = G_{*} \left( t^{\frac{1}{1 - \alpha}} \right).
\end{equation}
\end{itemize}
Let \( \underline{u}, \overline{u} \in W^{s, G}_{\mathrm{loc}}(\Omega) \) be a weak sub-solution and super-solution, respectively, of problem~\eqref{GP} in the sense of Definition~\ref{definition1}. Then it follows that \( \underline{u} \leq \overline{u} \) almost everywhere in \( \Omega \).
\end{theorem}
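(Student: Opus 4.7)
\medskip
\noindent
\textbf{Proof plan.}
The strategy is to use the weak formulations with test functions of Díaz--Saa type concentrated on the (putative) bad set $\omega_{+} := \{x \in \Omega : \underline{u}(x) > \overline{u}(x)\}$, and exploit the fact that the full right-hand side $F(x,u) := f(x)u^{-\alpha} + k(x) u^{\beta}$ of \eqref{GP} satisfies the monotonicity $s \mapsto F(x,s)/s^{p^{-}-1}$ being non-increasing, which is exactly the content of (F2) and follows from $\alpha > 0$ together with $\beta < p^{-}-1$. Throughout I set $q = p^{-}$.

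\medskip
\noindent
\textbf{Step 1: test functions and admissibility.}
Assume by contradiction that $|\omega_{+}|>0$. For an increasing compact exhaustion $K_{j}\Subset\Omega$ and a truncation level $M>0$, set $\underline{u}_{M} := \min(\underline{u},M)$, $\overline{u}_{M} := \min(\overline{u},M)$, and
\[
\varphi_{1} := \chi_{K_{j}}\,\frac{[\underline{u}_{M}^{\,q}-\overline{u}_{M}^{\,q}]^{+}}{\underline{u}_{M}^{\,q-1}}, \qquad
\varphi_{2} := \chi_{K_{j}}\,\frac{[\underline{u}_{M}^{\,q}-\overline{u}_{M}^{\,q}]^{+}}{\overline{u}_{M}^{\,q-1}}.
\]
Condition (ii) of Definition~\ref{definition1} gives a uniform lower bound of $\underline{u},\overline{u}$ on $K_{j}$, so both $\varphi_{i}$ lie in $L^{\infty}_{c}(\Omega)$. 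Combining this with the bound $(u-\epsilon)^{+}\in W^{s,G}_{0}(\Omega)$ from Remark~\ref{remark2} and the fact that the map $t \mapsto [\underline{u}_{M}(t)^{q}-\overline{u}_{M}(t)^{q}]^{+}$ is Lipschitz, one shows that $\varphi_{1},\varphi_{2}\in W^{s,G}_{0}(\Omega)\cap L^{\infty}_{c}(\Omega)$, hence are admissible.

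\medskip
\noindent
\textbf{Step 2: subtraction and RHS control.}
Plug $\varphi_{1}$ into the sub-solution inequality and $\varphi_{2}$ into the super-solution inequality (Definition~\ref{definition1}(iii)) and subtract:
\[
\mathcal{L}(\underline{u},\overline{u}) \;\leq\; \int_{\Omega}f(x)\bigl(\underline{u}^{-\alpha}\varphi_{1}-\overline{u}^{-\alpha}\varphi_{2}\bigr)\,dx + \int_{\Omega}k(x)\bigl(\underline{u}^{\beta}\varphi_{1}-\overline{u}^{\beta}\varphi_{2}\bigr)\,dx,
\]
where $\mathcal{L}(\underline{u},\overline{u})$ denotes the difference of the two nonlocal terms. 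On $\omega_{+}\cap K_{j}$ (the only set where the integrands are nonzero) one factors out $[\underline{u}_{M}^{\,q}-\overline{u}_{M}^{\,q}]^{+}>0$ and is left, for each of the two integrands, with a factor of the form
\[
\underline{u}^{\gamma}\,\underline{u}_{M}^{\,1-q} - \overline{u}^{\gamma}\,\overline{u}_{M}^{\,1-q}, \qquad \gamma\in\{-\alpha,\beta\}.
\]
Since $q=p^{-}$ and $-\alpha+1-q<0$, $\beta+1-q<0$ (the latter precisely because $\beta<p^{-}-1$), both factors are $\leq 0$ at points of $\omega_{+}$ (where $\underline{u}>\overline{u}$ and correspondingly $\underline{u}_{M}\geq\overline{u}_{M}$). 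Hence the entire right-hand side is $\leq 0$, and the integrability conditions on $f,k$ (assumptions (A1)--(A3) and $k\in L^{\overline{K}}$) together with Lemma~\ref{lemma3} and Remark~\ref{remark1} guarantee finiteness of each piece uniformly in $M,j$.

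\medskip
\noindent
\textbf{Step 3: LHS nonnegativity.}
The key step is to show $\mathcal{L}(\underline{u},\overline{u})\geq 0$. I will derive this from Lemma~\ref{Lem2}, applied on the ``positive'' set $\omega_{+}$, by splitting $\mathbb{R}^{2N}$ into the four subsets obtained by intersecting each variable with $\omega_{+}$ and its complement, and observing that only contributions where at least one of $x,y$ lies in $\omega_{+}\cap K_{j}$ matter. Alternatively (and perhaps more cleanly), one appeals to the pointwise $G$-fractional Picone inequality of Proposition~\ref{proposition2} combined with Young's inequality~\eqref{equ3}, as already indicated in the Remark following Lemma~\ref{Lem2}; this yields the integrated inequality directly for the positive-part test functions.

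\medskip
\noindent
\textbf{Step 4: passage to the limit and conclusion.}
Combining Steps~2 and~3 gives
\[
0\;\leq\;\mathcal{L}(\underline{u},\overline{u})\;\leq\;\int_{\omega_{+}\cap K_{j}}\!\bigl[f\,(\cdots)+k\,(\cdots)\bigr]\,dx\;\leq\;0,
\]
so equality holds. Since $k>0$ a.e.\ in $\Omega$ and the factor multiplying $k$ is strictly negative on $\omega_{+}$ (strict inequality in $\beta+1-q<0$), and since $[\underline{u}_{M}^{\,q}-\overline{u}_{M}^{\,q}]^{+}>0$ on $\omega_{+}\cap\{\underline{u}<M,\overline{u}<M\}$ for all sufficiently large $M$, the vanishing of the integral forces $|\omega_{+}|=0$, completing the proof after letting $M\to\infty$ and $K_{j}\nearrow\Omega$.

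\medskip
\noindent
\textbf{Main obstacle.}
The delicate part is Step~3: the Díaz--Saa inequality of Lemma~\ref{Lem2} is stated for the full test functions $(u^{q}-v^{q})/u^{q-1}$ and $(v^{q}-u^{q})/v^{q-1}$ rather than their positive parts, and $\underline{u},\overline{u}$ are not themselves in $W^{s,G}_{0}(\Omega)$. One has to carefully split the double integral over $\mathbb{R}^{2N}$ according to the sign of $\underline{u}-\overline{u}$ at $x$ and at $y$, treat the ``mixed'' contributions (where one variable lies in $\omega_{+}$ and the other outside), and justify each estimate using the pointwise $G$-fractional Picone inequality of Proposition~\ref{proposition2}. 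The integrability conditions (A1)--(A3) on $f$ and the hypothesis $k\in L^{\overline{K}}$ are precisely calibrated through Hölder (Lemma~\ref{lemma3}) and the embedding $W^{s,G}_{0}(\Omega)\hookrightarrow L^{G_{*}}(\Omega)$ (Remark~\ref{remark1}) so that all the boundary/limiting passages go through.
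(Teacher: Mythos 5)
Your proposal attempts a \emph{direct} Díaz--Saa argument between (truncated/cutoff versions of) $\underline{u}$ and $\overline{u}$, but this is not what the paper does, and the route has a genuine gap that your own ``main obstacle'' paragraph correctly identifies but does not resolve. The paper instead follows the Canino--Sciunzi scheme: it first introduces the constraint set $\mathcal{K}=\{\phi\in W^{s,G}_0(\Omega):0\le\phi\le\overline{u}\}$ and the $\epsilon$-regularized energy $\mathcal{J}_\epsilon$, minimizes $\mathcal{J}_\epsilon$ on $\mathcal{K}$ to obtain $w_0$ (Claim~1), upgrades the obstacle-problem variational inequality for $w_0$ to a super-solution-type inequality for the regularized nonlinearity, valid for a wide class of test functions, by pushing against $\overline{u}$ (Claim~2), and only then runs a Díaz--Saa/Picone argument --- between $\underline{u}$ and $w_0+\epsilon$, not between $\underline{u}$ and $\overline{u}$ --- to conclude $\underline{u}\le w_0+\epsilon\le\overline{u}+\epsilon$ (Claim~3), finishing by letting $\epsilon\to0$. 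The intermediate $w_0$ is the crux: it is a genuine element of $W^{s,G}_0(\Omega)$, it is nonnegative, and $w_0+\epsilon$ is uniformly bounded below, so the Díaz--Saa test functions are admissible and the nonlocal bilinear forms are globally integrable.

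Two concrete problems with your Step~3. First, Lemma~\ref{Lem2} requires $u,v\in W^{s,G}_0(\Omega)$ with $u/v,\,v/u\in L^\infty(\Omega)$; neither hypothesis is available for $\underline{u},\overline{u}\in W^{s,G}_{\mathrm{loc}}(\Omega)$, and the cutoff $\chi_{K_j}$ does not fix this, because the nonlocal forms in Definition~\ref{definition1}(iii) still involve the \emph{untruncated} $\underline{u},\overline{u}$ everywhere on $\mathbb{R}^{2N}$, so the tails are not controlled. Second, your test functions $\varphi_i$ place $\underline{u}_M^{\,q-1}$ and $\overline{u}_M^{\,q-1}$ in the denominators while the kernels $g(|\underline{u}(x)-\underline{u}(y)|/|x-y|^s)$ use the full $\underline{u},\overline{u}$; this mismatch destroys the Picone/Díaz--Saa algebraic structure. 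The paper's test functions $\Psi_m,\Phi_m$ avoid this precisely by applying the truncation $\mathbf{T}_\mathfrak{k}$ only to the numerator (and shifting by $m$ and $\epsilon$), keeping the full $(\underline{u}+m)^{p^--1}$ and $(w_0+m+\epsilon)^{p^--1}$ in the denominators. In short, the monotonicity observation in your Step~2 is correct and does appear in the paper (it is the final $\beta<p^--1$ estimate of Claim~3), but without the minimizer $w_0$ you have no way to make the left-hand-side estimate rigorous, so the proposal is not a complete proof.
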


\noindent
We point out that assuming \( \underline{u}, \overline{u} \in W^{s, G}_{0}(\Omega) \) allows for a substantial simplification of the proof of Theorem~\ref{Theorem1}. In what follows, we establish a result tailored to problem~\eqref{GGP}, after first introducing the definitions of sub-solutions, super-solutions, and solutions in our framework.
\begin{definition}\label{definition2}
	We define a nonnegative function \( u \in W^{s, G}_{0}(\Omega) \) as a weak super-solution to \eqref{GGP} if it satisfies the following conditions:
	\begin{enumerate}
		\item[(i)] \( F(\cdot, u(\cdot)) \varphi(\cdot) \in L^{1}(\Omega) \) for all \( \varphi \in W^{s, G}_{0}(\Omega) \).
		\item[(ii)] For all \( \varphi \in W^{s, G}_{0}(\Omega) \) with \( \varphi \geq 0 \), we have
\begin{equation*}
\int_{\mathbb{R}^{N}} \int_{\mathbb{R}^{N}} g\left( \frac{\left| u(x) - u(y) \right|}{\left| x - y \right|^{s}} \right) 
\dfrac{u(x) - u(y)}{\left| u(x) - u(y) \right|} 
\dfrac{\varphi(x) - \varphi(y)}{\left| x - y \right|^{N + s}} \, dx \, dy 
\geq \int_{\Omega} F(x, u) \varphi(x) \, dx.
\end{equation*}
	\end{enumerate}
	If \( u \) satisfies the reverse inequality, it is called a weak sub-solution of \eqref{GGP}. A function \( u \) that satisfies both the weak sub- and super-solution conditions is termed a weak solution of \eqref{GGP}.
\end{definition}

\begin{theorem}\label{Theorem2}
	Assume \( F \) satisfies \textbf{(F1)} and \textbf{(F2)}. Let \( \underline{u}, \overline{u} \in W^{s, G}_{0}(\Omega) \) be, respectively, a weak sub-solution and super-solution to the problem \eqref{GGP} in the sense of Definition \ref{definition2}. Then, \( \underline{u} \leq \overline{u} \)  almost everywhere in \( \Omega \).
\end{theorem}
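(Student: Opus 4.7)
The approach is to adapt the Díaz--Saa / Picone testing scheme used in the proof of Lemma \ref{Lem2} to the sub/super-solution setting, and to drive the comparison via the monotonicity hypothesis \textbf{(F2)}. Set $q := p^{-}$ and, arguing by contradiction, suppose the set $A := \{\underline{u} > \overline{u}\}$ has positive measure. For a regularization parameter $\delta > 0$, introduce $U := \underline{u} + \delta$, $V := \overline{u} + \delta$, and the nonnegative weight $\omega_{\delta} := (U^{q} - V^{q})^{+}$, which is supported in $A$. As test functions in the weak formulations I take
\[
\phi_{1} := \frac{\omega_{\delta}}{U^{q-1}}, \qquad \phi_{2} := \frac{\omega_{\delta}}{V^{q-1}},
\]
both nonnegative. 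Their admissibility in $W^{s,G}_{0}(\Omega)$ follows from the Lagrange-type pointwise estimate already used in the proof of Lemma \ref{Lem2}, combined with Lemmas \ref{lemma1}, \ref{lemma2} and the $\Delta_{2}$-condition \eqref{equ1}.

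Plugging $\phi_{1}$ into the sub-solution inequality for $\underline{u}$ and $\phi_{2}$ into the super-solution inequality for $\overline{u}$, then subtracting, produces
\[
\mathcal{L}_{\delta} \leq \int_{\Omega} \omega_{\delta} \left[ \frac{F(x,\underline{u})}{U^{q-1}} - \frac{F(x,\overline{u})}{V^{q-1}} \right] dx =: \mathcal{R}_{\delta},
\]
where $\mathcal{L}_{\delta}$ is the nonlocal double integral involving the $g$-Laplacian. On $A$, the limit bracket $F(x,\underline{u})/\underline{u}^{q-1} - F(x,\overline{u})/\overline{u}^{q-1}$ is nonpositive by \textbf{(F2)}; a dominated convergence argument as $\delta \to 0^{+}$ then yields $\limsup_{\delta \to 0^{+}} \mathcal{R}_{\delta} \leq 0$. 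For the left-hand side, after an auxiliary truncation at level $M$ (to enforce the $L^{\infty}$ hypothesis on $U/V, V/U$ needed by Lemma \ref{Lem2}), the Díaz--Saa inequality \eqref{equ0} gives $\mathcal{L}_{\delta} \geq 0$; a monotone convergence as $M \to \infty$ preserves this bound in the limit.

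Combining the two estimates forces $\mathcal{L}_{\delta} \to 0$ and equality throughout. Invoking the equality case of Lemma \ref{Lem2}(1)---valid at $q = p^{-}$ thanks to the ray-strict convexity of $\mathcal{W}$ established in Proposition \ref{pro3}---we conclude $\underline{u} = c\,\overline{u}$ a.e.\ in $\Omega$ for some constant $c > 0$. Under the contradiction assumption $|A| > 0$ we must have $c > 1$. Substituting $\underline{u} = c\overline{u}$ back into the sub-solution inequality (tested, say, with $\overline{u}$ itself), using the elementary bound $g(c\tau) \geq c^{p^{-}-1} g(\tau)$ valid for $c \geq 1$ (a direct consequence of \textbf{(H4)}), and comparing with the super-solution inequality tested by $\overline{u}$, every inequality in the resulting chain must saturate. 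Together with \textbf{(F2)}, this forces $g(c\tau) = c^{p^{-}-1} g(\tau)$ on the essential range of $|\overline{u}(x) - \overline{u}(y)|/|x-y|^{s}$, which collapses either to $c = 1$ (contradicting $c > 1$) or to the triviality of $\overline{u}$, completing the argument.

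The main obstacle is the concurrent handling of the two limits $\delta \to 0^{+}$ and $M \to \infty$: regularization is needed to give meaning to the ratios $\omega_{\delta}/U^{q-1}$ and $\omega_{\delta}/V^{q-1}$ near the zero set, while truncation is needed for the $L^{\infty}$ quotient hypothesis of Lemma \ref{Lem2}, since $\underline{u}, \overline{u} \in W^{s,G}_{0}(\Omega) \hookrightarrow L^{G_{*}}(\Omega)$ need not be bounded. Uniform integrability along the passage to the limit must be secured via the growth estimates \eqref{equ2} and Lemma \ref{lemma2}, with particular care in the regions $\{\underline{u} \to 0\}$ and $\{|\overline{u}|, |\underline{u}| \to \infty\}$. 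A secondary delicate point is upgrading the scaling equality $\underline{u} = c\overline{u}$ to $c \leq 1$, where the non-power nature of $G$ (captured by $p^{-} < p^{+}$) plays a decisive role.
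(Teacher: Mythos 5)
Your high-level strategy matches the paper's: test the sub/super-solution inequalities with functions of the form $\bigl((\underline{u}+\delta)^{q}-(\overline{u}+\delta)^{q}\bigr)^{+}$ divided by $(\underline{u}+\delta)^{q-1}$ and $(\overline{u}+\delta)^{q-1}$, subtract, obtain nonnegativity of the nonlocal difference, and let \textbf{(F2)} force the right-hand side to be nonpositive. However, there are genuine gaps in the way you carry this out. First, the claim that ``the D\'iaz--Saa inequality \eqref{equ0} gives $\mathcal{L}_{\delta}\geq 0$'' is not correct as stated: Lemma~\ref{Lem2} is formulated for test functions $\frac{u^{q}-v^{q}}{u^{q-1}}$ and $\frac{v^{q}-u^{q}}{v^{q-1}}$ \emph{without} positive parts or truncations, and under the hypothesis $u/v,\,v/u\in L^{\infty}(\Omega)$. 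Your $\phi_{1},\phi_{2}$ contain the positive part $\omega_{\delta}=(U^{q}-V^{q})^{+}$, which makes the argument much more delicate: one has to decompose $\mathbb{R}^{2N}$ according to whether the difference $(\underline{u}+m)^{p^{-}}-(\overline{u}+m)^{p^{-}}$ is nonpositive, lies in $(0,\mathfrak{k})$, or exceeds $\mathfrak{k}$, and then treat the diagonal blocks with Lemma~\ref{Lem2}, the cross terms with the monotonicity of $\tau\mapsto g(|\tau-\tau_{0}|)\frac{\tau-\tau_{0}}{|\tau-\tau_{0}|}\,\tau^{1-p^{-}}$, and the remainders with a Picone-type estimate. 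The paper does exactly this by importing the Claim~3 machinery from the proof of Theorem~\ref{Theorem1}. Moreover, the paper's truncation acts on the numerator via $\mathbf{T}_{\mathfrak{k}}$ (keeping the test function admissible and leaving $\underline{u},\overline{u}$ untouched), whereas your auxiliary truncation of $U,V$ ``at level $M$'' would destroy the sub/super-solution inequalities and is never made precise; the $L^{\infty}$ obstacle on $U/V,\,V/U$ is real (as $W^{s,G}_{0}(\Omega)\hookrightarrow L^{G_{*}}$ need not give boundedness), but the fix is the $\mathbf{T}_{\mathfrak{k}}$ truncation, not a truncation of the solutions.

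Second, the closing step is overbuilt and flawed. The paper concludes directly from the nonnegativity of the nonlocal term and the nonpositivity of the right-hand side that
\[
\int_{\{\underline{u}>\overline{u}\}}\Bigl(\tfrac{F(x,\underline{u})}{\underline{u}^{p^{-}-1}}-\tfrac{F(x,\overline{u})}{\overline{u}^{p^{-}-1}}\Bigr)\bigl(\underline{u}^{p^{-}}-\overline{u}^{p^{-}}\bigr)\,dx\leq 0,
\]
and this already suffices; there is no need to pass through the equality case of Lemma~\ref{Lem2}. Your detour through $\underline{u}=c\,\overline{u}$ is both unnecessary and unrigorous: the inference ``every inequality in the resulting chain must saturate,'' followed by the assertion that $g(c\tau)=c^{p^{-}-1}g(\tau)$ ``collapses either to $c=1$ or to the triviality of $\overline{u}$,'' is hand-waving. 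Since $G$ is a general $N$-function with $p^{-}<p^{+}$, the identity $g(c\tau)=c^{p^{-}-1}g(\tau)$ on the essential range of $|\overline{u}(x)-\overline{u}(y)|/|x-y|^{s}$ does not plainly force $c=1$, and the alternative $\overline{u}\equiv 0$ is not a contradiction in the statement (nothing rules out the trivial supersolution). The contradiction framing itself is unneeded: the argument delivers the comparison directly, without ever establishing proportionality.
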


\noindent
As a direct consequence of the weak comparison principle established in Theorem~\ref{Theorem1}, we obtain the following uniqueness result:

\begin{corollary}\label{corollary}
	Let \( G \) be an \( N \)-function satisfying hypotheses \textbf{(H1)}–\textbf{(H4)}. Suppose that the assumptions \textbf{(A1)}–\textbf{(A3)} stated in Theorem~\ref{Theorem1} hold. Then, any weak solution to problem~\eqref{GP} in the sense of Definition~\ref{definition1} is unique.
\end{corollary}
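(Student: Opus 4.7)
The plan is to derive the corollary as an immediate, almost tautological, consequence of the weak comparison principle of Theorem \ref{Theorem1}. The strategy is the classical symmetric argument: any weak solution is simultaneously a weak sub-solution and a weak super-solution in the sense of Definition \ref{definition1}, so one can apply the comparison principle twice with the roles of the two solutions swapped, and then conclude equality almost everywhere.

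More precisely, I would proceed as follows. Let $u_{1}, u_{2} \in W^{s,G}_{\mathrm{loc}}(\Omega)$ be two weak solutions to \eqref{GP} in the sense of Definition \ref{definition1}. By the very definition of a weak solution, each $u_{i}$ satisfies simultaneously conditions (i)--(iii) of Definition \ref{definition1} together with the reverse of (iii). In particular, both functions $u_{1}$ and $u_{2}$ qualify \emph{both} as a weak sub-solution and as a weak super-solution to \eqref{GP}. The underlying hypotheses \textbf{(H1)}--\textbf{(H4)} on $G$, the integrability $k \in L^{\overline{K}}(\Omega)$, and the alternative assumptions \textbf{(A1)}--\textbf{(A3)} on $(f,\alpha)$ are assumed throughout, so the whole machinery of Theorem \ref{Theorem1} is at our disposal.

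I then apply Theorem \ref{Theorem1} a first time, choosing $\underline{u} := u_{1}$ (viewed as a sub-solution) and $\overline{u} := u_{2}$ (viewed as a super-solution), which yields $u_{1} \leq u_{2}$ a.e.\ in $\Omega$. Swapping the roles and applying Theorem \ref{Theorem1} a second time with $\underline{u} := u_{2}$ and $\overline{u} := u_{1}$ yields the reverse inequality $u_{2} \leq u_{1}$ a.e.\ in $\Omega$. Combining both inequalities gives $u_{1} = u_{2}$ a.e.\ in $\Omega$, which is the desired uniqueness statement.

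There is essentially no genuine obstacle at this stage, since the heart of the matter, namely the construction of admissible test functions in $W^{s,G}_{\mathrm{loc}}(\Omega)$ that accommodate the singular term $u^{-\alpha}$, the superlinear term $u^{\beta}$ with $0<\beta<p^{-}-1$, and the nonlocal Di\'az--Saa type estimate of Lemma \ref{Lem2} together with the $G$-fractional Picone inequality of Proposition \ref{proposition2}, has already been absorbed into the proof of Theorem \ref{Theorem1}. The only minor point to double-check is that Definition \ref{definition1}(i)--(ii), which are two-sided conditions on the solution itself (existence of $\Psi \in \textbf{FC}$ with $\Psi(u) \in W^{s,G}_{0}(\Omega)$ and the local positivity bound $u \geq C(K)$ on compact sets), are satisfied by each $u_{i}$ when it is used either as a sub- or as a super-solution; but this is automatic, because these requirements are intrinsic to the notion of weak solution and do not depend on the direction of the inequality in (iii). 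Hence no additional argument is needed, and the corollary follows.
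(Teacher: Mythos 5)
Your proof is correct and follows exactly the same symmetric argument as the paper: both solutions serve simultaneously as sub- and super-solutions, and two applications of Theorem~\ref{Theorem1} with the roles swapped yield the two inequalities whose conjunction gives uniqueness. The additional remarks about Definition~\ref{definition1}(i)--(ii) being intrinsic to the notion of weak solution are accurate but not needed, since the paper's proof treats this as self-evident.
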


\begin{remark}
	This result complements the analysis developed in paper~\cite{bal2024singular}, where the existence and regularity of solutions were investigated.
\end{remark}

\noindent
We now highlight a direct consequence of the uniqueness result:

\begin{corollary}\label{corollary2}
	Let \( G \) be an \( N \)-function satisfying hypotheses \textbf{(H1)}–\textbf{(H4)}. Suppose that the conditions \textbf{(A1)}–\textbf{(A3)} of Theorem~\ref{Theorem1} are fulfilled, and let \( u \) denote the unique solution, if it exists, to problem~\eqref{GP} in the sense of Definition~\ref{definition1}. Assume further that the domain \( \Omega \) is symmetric with respect to the hyperplane
	\[
	\mathcal{H}^{\upsilon}_{\lambda} := \left\{ x \in \mathbb{R}^{N} : x \cdot \upsilon = \lambda \right\}, \quad \lambda \in \mathbb{R}, \quad \upsilon \in \mathbf{S}^{N-1}.
	\]
	If, in addition, both \( f \) and \( g \) are symmetric with respect to \( \mathcal{H}^{\upsilon}_{\lambda} \), then the solution \( u \) inherits this symmetry. In particular, if \( \Omega \) is a ball or an annulus centered at the origin, and both \( f \) and \( g \) are radially symmetric, then the solution \( u \) is radially symmetric as well.
\end{corollary}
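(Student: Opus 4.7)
The plan is to combine the uniqueness statement in Corollary~\ref{corollary} with the invariance of the fractional $g$-Laplacian under Euclidean isometries, in the spirit of a classical ``reflection $+$ uniqueness'' argument. Let $R=R^{\upsilon}_{\lambda}:\mathbb{R}^{N}\to\mathbb{R}^{N}$ denote the orthogonal reflection across $\mathcal{H}^{\upsilon}_{\lambda}$, and for any measurable $w$ set $\tilde w(x):=w(R(x))$. Since $R(\Omega)=\Omega$, the function $\tilde u$ is well-defined, vanishes on $\mathbb{R}^{N}\setminus\Omega$, and is the natural competitor: I aim to show that $\tilde u$ is itself a weak solution of \eqref{GP} in the sense of Definition~\ref{definition1}, after which uniqueness immediately forces $u\equiv\tilde u$ a.e.\ in $\Omega$.

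First I would verify that the structural conditions (i)--(ii) of Definition~\ref{definition1} transfer from $u$ to $\tilde u$. Since $R$ is an isometry with unit Jacobian, the change of variables $(x,y)\mapsto(R(x),R(y))$ yields $\Phi_{s,G}(\tilde w)=\Phi_{s,G}(w)$ and $\Phi_{G}(\tilde w)=\Phi_{G}(w)$, so the spaces $W^{s,G}_{\mathrm{loc}}(\Omega)$ and $W^{s,G}_{0}(\Omega)$ are stable under $w\mapsto\tilde w$. Taking the same $\Psi\in\textbf{FC}$ as for $u$, one gets $\Psi(\tilde u)=\Psi(u)\circ R\in W^{s,G}_{0}(\Omega)$; the lower bound in (ii) transfers by using $C(R(K))$ on the compact set $R(K)\Subset\Omega$.

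The crucial step is checking the weak formulation (iii). For any admissible test $\varphi\in W^{s,G}_{0}(\Omega)\cap L^{\infty}_{c}(\Omega)$ with $\varphi\ge 0$, set $\tilde\varphi:=\varphi\circ R$, which is another such test function. Performing the change of variables $(x,y)\mapsto(R(x),R(y))$ in the nonlocal pairing, and using $|R(x)-R(y)|=|x-y|$, converts the weak pairing $\langle(-\Delta)^{s}_{g}\tilde u,\varphi\rangle$ into $\langle(-\Delta)^{s}_{g}u,\tilde\varphi\rangle$. Since $u$ solves \eqref{GP}, this equals
\[
\int_{\Omega}\bigl(f(x)\,u(x)^{-\alpha}+k(x)\,u(x)^{\beta}\bigr)\tilde\varphi(x)\,dx,
\]
and one further change of variables, combined with the symmetry hypotheses $f\circ R=f$ and $k\circ R=k$ (I read the coefficient called ``$g$'' in the statement as $k$, since $g=G'$ is reserved for the derivative of the Young function), brings this to $\int_{\Omega}(f\,\tilde u^{-\alpha}+k\,\tilde u^{\beta})\varphi\,dx$. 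Hence $\tilde u$ is a weak solution of \eqref{GP}.

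Finally, Corollary~\ref{corollary} gives $\tilde u=u$ a.e.\ in $\Omega$, which is exactly invariance of $u$ under $R$, i.e.\ symmetry with respect to $\mathcal{H}^{\upsilon}_{\lambda}$. For a ball or annulus centered at the origin with $f,k$ radial, iterating this argument over all hyperplanes through the origin yields radial symmetry of $u$. The only real technical content is the isometric invariance of the $(s,G)$-modular and of the nonlocal operator; since this reduces to a one-line change of variables, the argument is essentially a verification, and the statement is a direct consequence of Corollary~\ref{corollary} with no additional analytic machinery required.
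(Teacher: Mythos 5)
Your proposal is correct and follows essentially the same route as the paper's proof: reflect \( u \) across the hyperplane (the paper reduces by a rigid motion to the case \( \{x_{1}=0\} \) and sets \( v(x_{1},x')=u(-x_{1},x') \)), observe via the isometric invariance of the nonlocal pairing and the symmetry of \( f \) and \( k \) that the reflected function is again a weak solution in the sense of Definition~\ref{definition1}, and then invoke the uniqueness of Corollary~\ref{corollary} to conclude, with radial symmetry obtained by running the argument over all hyperplanes through the origin. Your write-up merely fills in the change-of-variables and Definition~\ref{definition1} verifications that the paper dismisses as straightforward, and your reading of the coefficient ``\( g \)'' in the statement as \( k \) is the intended one.
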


\section{Proof of the Main Results}\label{sc3}

\noindent
In this section, we begin with the proof of our first main result.
\begin{proof}[\textbf{Proof of Theorem~\ref{Theorem1}}]
First, we define the following convex and closed subset of $W^{s, G}_{0}(\Omega)$:
\[
\mathcal{K} := \left\lbrace \phi \in W^{s, G}_{0}(\Omega)\, :\, 0 \leq \phi \leq \overline{u} \text{ a.e. in } \Omega \right\rbrace.
\]
Then, for each fixed $\epsilon \in (0,1)$, we introduce the functional $\mathcal{J}_{\epsilon} : \mathcal{K} \to \mathbb{R}$ defined by
\begin{align*}
\mathcal{J}_{\epsilon}(w) &:= \int_{\mathbb{R}^{N}} \int_{\mathbb{R}^{N}} G\left( \frac{\left| w(x) - w(y) \right|}{\left| x - y \right|^{s}} \right) \frac{dx \, dy}{\left| x - y \right|^{N}}
- \int_{\Omega} \mathbb{G}_{\epsilon}(x, w) \, dx,
\end{align*}
where
\begin{equation} \label{3comequ13}
\mathbb{G}_{\epsilon}(x, w) := \int_{0}^{w} \left[ f(x)(s + \epsilon)^{-\alpha} + k(x)(s + \epsilon)^{\beta} \right] \, ds.
\end{equation}
At this stage, we aim to establish three fundamental claims. We begin with the following:

\begin{claim}
	For any \( \epsilon > 0 \), there exists a minimizer \( w_{0} \in \mathcal{K} \) of the functional \( \mathcal{J}_{\epsilon} \) such that, for every \( \psi \in w_{0} + (W^{s, G}_{0}(\Omega) \cap L^{\infty}_{c}(\Omega)) \) with \( \psi \in \mathcal{K} \), the following variational inequality holds:
\begin{equation} \label{3comequ8}
\begin{aligned}
\int_{\mathbb{R}^{N}} \int_{\mathbb{R}^{N}}& g\left( \frac{\left| w_{0}(x) - w_{0}(y)\right| }{\vert x - y \vert^{s}} \right) \dfrac{w_{0}(x) - w_{0}(y)}{\left| w_{0}(x) - w_{0}(y) \right|} \cdot \frac{[(\psi - w_{0})(x) - (\psi - w_{0})(y)]}{\vert x - y \vert^{N + s}} \, dx \, dy \\[4pt]
&\quad \geq \int_{\Omega} \left[ f(x) (w_{0} + \epsilon)^{-\alpha} + k(x) (w_{0} + \epsilon)^{\beta} \right] (\psi - w_{0}) \, dx.
\end{aligned}
\end{equation}
\end{claim}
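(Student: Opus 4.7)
The strategy is the classical direct method of the calculus of variations on the non-empty (since $0 \in \mathcal{K}$), convex, strongly closed, and therefore weakly closed subset $\mathcal{K}$ of $W^{s,G}_{0}(\Omega)$. The plan is: show $\mathcal{J}_{\epsilon}$ is well-defined, bounded below, and coercive on $\mathcal{K}$; extract a minimizing sequence converging weakly to some $w_{0}\in \mathcal{K}$; verify weak lower semicontinuity; and finally extract the variational inequality from the convex structure of $\mathcal{K}$.

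To see that $\mathcal{J}_\epsilon$ is well-defined and bounded below, first observe that the singular factor satisfies $(s+\epsilon)^{-\alpha}\le \epsilon^{-\alpha}$ for $s\ge 0$, so the regularization in \eqref{3comequ13} removes the singularity. Carrying out the inner integration in \eqref{3comequ13} yields pointwise bounds for $\mathbb{G}_{\epsilon}(x,w)$ by $f(x)$ times a quantity of order $(w+\epsilon)^{1-\alpha}$ under \textbf{(A3)}, $f(x)$ times $\ln((w+\epsilon)/\epsilon)$ under \textbf{(A2)}, or $\epsilon^{1-\alpha}f(x)/(\alpha-1)$ under \textbf{(A1)}, together with a $k(x)$-part of order $(w+\epsilon)^{\beta+1}$. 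Integrability of each piece follows from the H\"older inequality (Lemma \ref{lemma3}) applied to the duality pairings provided by \textbf{(A1)}–\textbf{(A3)} and the hypothesis $k\in L^{\overline{K}}(\Omega)$, combined with the elementary identity $\|w^{r}\|_{L^{G_{*}(\cdot^{1/r})}}=\|w\|_{L^{G_{*}}}^{r}$ for $r>0$ (read directly from the Luxemburg norm and the definition of $K$ and $\mathfrak{F}$) and the continuous Sobolev embedding $W^{s,G}_{0}(\Omega)\hookrightarrow L^{G_{*}}(\Omega)$ of Remark \ref{remark1}.

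The same estimates give coercivity. Indeed, by Lemma \ref{lemma2} the seminorm part dominates $\min\{\|w\|_{W^{s,G}_{0}(\Omega)}^{p^{-}},\|w\|_{W^{s,G}_{0}(\Omega)}^{p^{+}}\}$, whereas the lower-order integral $\int_{\Omega}\mathbb{G}_{\epsilon}(x,w)\,dx$ is controlled by a constant plus terms whose growth in $\|w\|_{W^{s,G}_{0}(\Omega)}$ is of order $\beta+1$, $1-\alpha$, or $1$, all strictly below $p^{-}$ (using $\beta<p^{-}-1$, $\alpha>0$, and $1<p^{-}$). Thus any minimizing sequence $\{w_{n}\}\subset \mathcal{K}$ is bounded in $W^{s,G}_{0}(\Omega)$, and, up to a subsequence, $w_{n}\rightharpoonup w_{0}$ weakly in $W^{s,G}_{0}(\Omega)$ and strongly in an Orlicz space $L^{H}(\Omega)$ with $H\prec\prec G_{*}$ by the compactness part of Remark \ref{remark1}; by weak closedness of $\mathcal{K}$, $w_{0}\in \mathcal{K}$. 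Weak lower semicontinuity of the modular part follows from the convexity of $G$, while continuity of $\int_{\Omega}\mathbb{G}_{\epsilon}(x,w_{n})\,dx$ along the minimizing sequence is obtained by combining this strong convergence with the Orlicz–H\"older bounds derived above and Vitali's theorem. This produces a minimizer $w_{0}\in \mathcal{K}$.

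It remains to deduce \eqref{3comequ8}. For any admissible $\psi=w_{0}+v$ with $v\in W^{s,G}_{0}(\Omega)\cap L^{\infty}_{c}(\Omega)$ and $\psi\in \mathcal{K}$, the convex combination $w_{0}+t(\psi-w_{0})=(1-t)w_{0}+t\psi$ lies in $\mathcal{K}$ for every $t\in(0,1)$, hence $\mathcal{J}_{\epsilon}(w_{0}+t(\psi-w_{0}))\ge \mathcal{J}_{\epsilon}(w_{0})$. Dividing by $t$ and passing to the limit $t\to 0^{+}$ yields the one-sided Gateaux-derivative inequality; direct differentiation of the modular part and of $\mathbb{G}_{\epsilon}$ produces exactly the two sides of \eqref{3comequ8}. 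The main obstacle here is justifying the passage to the limit under the integrals, especially for the term involving $k(x)(w_{0}+tv+\epsilon)^{\beta}$, because $\overline{u}$ is not assumed bounded. This is handled by dominated convergence: the bound $0\le w_{0}+tv\le \overline{u}$ a.e.\ (since the convex combination stays in $\mathcal{K}$) together with the support condition on $v$ yields the pointwise majorant $k(x)\,\|v\|_{\infty}(\overline{u}+\epsilon)^{\beta}\mathbf{1}_{\operatorname{supp}v}$, whose integrability reduces via H\"older to $(\overline{u}+\epsilon)^{\beta}\in L^{K}$ on $\operatorname{supp}v$. The latter is verified by splitting according to whether $\overline{u}\le\epsilon/2$ or not and invoking Remark \ref{remark2} (which ensures $(\overline{u}-\epsilon/2)^{+}\in W^{s,G}_{0}(\Omega)\hookrightarrow L^{G_{*}}(\Omega)$); the singular term is handled analogously using $(s+\epsilon)^{-\alpha}\le \epsilon^{-\alpha}$ and the hypothesis on $f$.
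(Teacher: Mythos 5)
Your proposal takes essentially the same route as the paper: the direct method on the convex closed set $\mathcal{K}$, coercivity split into cases on $\alpha$ via Orlicz–H\"older duality and the Luxemburg-norm scaling identity, weak lower semicontinuity of the modular combined with Vitali's theorem for the lower-order term, and finally the one-sided Gateaux derivative inequality at the constrained minimizer $w_0$.

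There is, however, a genuine gap in the last step. You dismiss the nonlocal modular term with the phrase \emph{``direct differentiation of the modular part $\ldots$ produces exactly the two sides of \eqref{3comequ8}''} and flag the $k(x)(\cdot+\epsilon)^{\beta}$ term as the ``main obstacle.'' In fact, passing to the limit $t\to 0^{+}$ inside the double integral
\[
\frac{1}{t}\iint\left[G\!\left(\frac{|(w_{0}+t(\psi-w_{0}))(x)-(w_{0}+t(\psi-w_{0}))(y)|}{|x-y|^{s}}\right)-G\!\left(\frac{|w_{0}(x)-w_{0}(y)|}{|x-y|^{s}}\right)\right]\frac{dx\,dy}{|x-y|^{N}}
\]
is not immediate in the Orlicz setting, because the pointwise derivative involves $g$ evaluated at an intermediate increment and does not come with an obvious $t$-uniform integrable majorant. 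The paper constructs one by combining the Lagrange Mean Value Theorem, Young's inequality $ab\le\overline{G}(a)+G(b)$ from \eqref{equ3}, the two-sided estimate $\overline{G}(g(\tau))\le(p^{+}-1)G(\tau)$ from \eqref{3equ03}, and the doubling bound of Lemma~\ref{lemma1}, to dominate the difference quotient (for all $t\le\tau_{0}$) by $C(\tau_{0})\bigl[G(\cdot\,w_{0}\text{-increment}\,\cdot)+G(\cdot\,(\psi-w_{0})\text{-increment}\,\cdot)\bigr]$, which is integrable against the kernel because $w_{0},\psi-w_{0}\in W^{s,G}_{0}(\Omega)$; dominated convergence then applies. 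You should supply this argument (or a convexity-based monotone-convergence substitute with both a finite upper and a finite lower bound); as written your justification of the limit in the modular term is missing. Conversely, your concern about the $\mathbb{G}_{\epsilon}$-term — which the paper does not spell out — is legitimate, and your proposed majorant $k\,\|v\|_{\infty}(\overline{u}+\epsilon)^{\beta}\mathbf{1}_{\operatorname{supp}v}$ with the Remark~\ref{remark2}-based integrability check is a reasonable way to close it.
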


\noindent 
First, by combining Lemma~\ref{lemma2} with Lemma~\ref{lemma4} (see also Remark~\ref{remark1}), we deduce that the functional \( \mathcal{J}_{\epsilon} \) is well defined on \( \mathcal{K} \). Next, we establish that \( \mathcal{J}_{\epsilon} \) is coercive on \( \mathcal{K} \). Indeed, let \( w \in \mathcal{K} \). By applying H\"{o}lder's inequality (see Lemma~\ref{lemma3}) together with Remark~\ref{remark1}, and assuming that \( k \in L^{\overline{K}}(\Omega) \), where \( \overline{K} \) denotes the complementary function of the \( N \)-function \( K \), defined by
\[
K(t) := G_{*}\left(t^{\frac{1}{\beta + 1}}\right),
\]
we obtain the following conclusion depending on the value of \( \alpha \), provided that \( \left\| w \right\|_{W^{s,G}_0(\Omega)} \) is sufficiently large.\\[4pt]
\textbf{Case 1.} Assume that \( 0 < \alpha < 1 \). Then, we observe that
\begin{equation*} 
\int_{\Omega} f u^{1 - \alpha} \, dx \leq 2 \left\| f \right\|_{L^{\overline{\mathfrak{F}}}(\Omega)} \left\| u^{1 - \alpha} \right\|_{L^{\mathfrak{F}}(\Omega)},
\end{equation*}
where the modular norm is given by
\begin{equation*} 
\begin{aligned}
\left\| u^{1 - \alpha} \right\|_{L^{\mathfrak{F}}(\Omega)} 
&= \inf \left\lbrace \lambda > 0 : \int_{\Omega} \mathfrak{F}\left( \frac{u^{1 - \alpha}}{\lambda} \right) \, dx \leq 1 \right\rbrace \\[4pt]
&= \left[ \inf \left\lbrace \lambda > 0 : \int_{\Omega} G_{*}\left( \frac{u}{\lambda} \right) \, dx \leq 1 \right\rbrace \right]^{1 - \alpha} \\[4pt]
&= \left\| u \right\|_{L^{G_{*}}(\Omega)}^{1 - \alpha},
\end{aligned}
\end{equation*}
with \( \mathfrak{F} \) defined in \eqref{3equ0}. In addition, it holds that
\[
\left\| u^{1 + \beta} \right\|_{L^{K}(\Omega)} = \left\| u \right\|_{L^{G_{*}}(\Omega)}^{1 + \beta}.
\]
Hence, we obtain
\begin{equation*} 
\begin{aligned}
\mathcal{J}_{\epsilon}(w)  
&\geq \left\| w \right\|_{W^{s, G}_{0}(\Omega)}^{p^{-}} 
- C \left[  
\left\| f \right\|_{L^{\overline{\mathfrak{F}}}(\Omega)} 
\left\| w \right\|_{L^{G_{*}}(\Omega)}^{1 - \alpha}
+ \left\| k \right\|_{L^{\overline{K}}(\Omega)} 
\left\| w \right\|_{L^{G_{*}}(\Omega)}^{1 + \beta} 
+ 1 
\right] \\[4pt]
&\geq \left\| w \right\|_{W^{s, G}_{0}(\Omega)}^{p^{-}} 
\left[  1
- C\left( 
\left\| f \right\|_{L^{\overline{\mathfrak{F}}}(\Omega)} 
\left\| w \right\|_{W^{s, G}_{0}(\Omega)}^{1 - \alpha - p^{-}} 
+ \left\| k \right\|_{L^{\overline{K}}(\Omega)} 
\left\| w \right\|_{W^{s, G}_{0}(\Omega)}^{1 + \beta - p^{-}} 
+ \left\| w \right\|_{W^{s, G}_{0}(\Omega)}^{-p^{-}} 
\right) 
\right] .
\end{aligned}
\end{equation*}
\textbf{Case 2:} When \( \alpha > 1 \), we obtain
\begin{equation*} 
\begin{aligned}
\mathcal{J}_{\epsilon}(w) 
&\geq \left\| w \right\|_{W^{s, G}_0(\Omega)}^{p^{-}} 
- C \left( \left\| k \right\|_{L^{\overline{K}}(\Omega)} \left\| w \right\|_{L^{G_{*}}(\Omega)}^{1 + \beta} + 1 \right) \\[4pt]
&\geq \left\| w \right\|_{W^{s, G}_0(\Omega)}^{p^{-}} 
\left[ 1 - C \left( \left\| k \right\|_{L^{\overline{K}}(\Omega)} \left\| w \right\|_{W^{s, G}_0(\Omega)}^{1 + \beta - p^{-}} 
+ \left\| w \right\|_{W^{s, G}_0(\Omega)}^{-p^{-}} \right) \right].
\end{aligned}
\end{equation*}
\textbf{Case 3:} For \( \alpha = 1 \), we observe that there exists a constant \( C(\epsilon) > 0 \) such that
\begin{equation*}
\ln(w + \epsilon) \leq C(\epsilon)(w + \epsilon).
\end{equation*}
As a consequence, we obtain
\begin{equation*}
\begin{aligned}
\mathcal{J}_{\epsilon}(w) &\geq \left\| w \right\|^{p^{-}}_{W^{s, G}_{0}(\Omega)} 
- C\left( \left\| f \right\|_{L^{\overline{G_{*}}}(\Omega)} \left\| w \right\|_{L^{G_{*}}(\Omega)}
+ \left\| k \right\|_{L^{\overline{K}}(\Omega)} \left\| w \right\|^{1 + \beta}_{L^{G_{*}}(\Omega)} + 1 \right) \\[4pt]
&\geq \left\| w \right\|^{p^{-}}_{W^{s, G}_{0}(\Omega)} 
\left[  1 - C\left( \left\| f \right\|_{L^{\overline{G_{*}}}(\Omega)} \left\| w \right\|^{1 - p^{-}}_{W^{s, G}_{0}(\Omega)}
+ \left\| k \right\|_{L^{\overline{K}}(\Omega)} \left\| w \right\|^{1 + \beta - p^{-}}_{W^{s, G}_{0}(\Omega)} 
+ \left\| w \right\|^{-p^{-}}_{W^{s, G}_{0}(\Omega)} \right)\right] .
\end{aligned}
\end{equation*}
Therefore, we conclude that \( \mathcal{J}_{\epsilon}(w) \to \infty \) as \( \left\| w \right\|_{W^{s, G}_{0}(\Omega)} \to \infty \), in all the cases considered above. Moreover, the energy functional \( \mathcal{J}_{\epsilon} \) is weakly lower semi-continuous on \( \mathcal{K} \). In particular, let \( (w_n) \subset \mathcal{K} \) be a sequence that converges weakly to some \( w \in \mathcal{K} \) as \( n \to \infty \). Hence, we have
	\begin{equation} \label{3equ1}
	\begin{gathered}
	\begin{aligned}
	\liminf_{n\to \infty} \mathcal{J}_{\epsilon}(u_{n}) \geq 	\liminf_{n\to \infty} \int_{\mathbb{R}^{N}} \int_{\mathbb{R}^{N}} G\left( \frac{\left| u_{n}(x) - u_{n}(y) \right|}{\left| x - y \right|^{s}} \right) \frac{dx \, dy}{\left| x - y \right|^{N}} -\limsup_{n \to \infty} \int_{\Omega} \mathbb{G}_{\epsilon}(x, u_{n}) \, dx.
	\end{aligned}
	\end{gathered}
	\end{equation}
On the other hand, based on the different cases of \( \alpha \), we have
\begin{equation*}
\begin{aligned}
&\int_{\Omega} f(x) (w_n + \epsilon)^{1 - \alpha} \, dx 
\leq C(\epsilon) \left\| f \right\|_{L^{1}(\Omega)} 
\leq C(\epsilon), \quad &&\text{if } \alpha > 1, \\[4pt]
&\int_{\Omega} f(x) \ln(w_n + \epsilon) \, dx 
\leq C(\epsilon) \left\| f \right\|_{L^{\overline{G_{*}}}(\Omega)} \left( 
\left\| w_n \right\|_{W^{s, G}_{0}(\Omega)} + 1\right) 
\leq C(\epsilon), \quad &&\text{if } \alpha = 1, \\[4pt]
&\int_{\Omega} f(x) (w_n + \epsilon)^{1 - \alpha} \, dx 
\leq C(\epsilon) \left\| f \right\|_{L^{\overline{\mathfrak{F}}}(\Omega)} 
\left( \left\| w_n \right\|_{W^{s, G}_{0}(\Omega)}^{1 - \alpha} + 1 \right) 
\leq C(\epsilon), \quad &&\text{if } \alpha < 1.
\end{aligned}
\end{equation*}
Additionally, we find that
\begin{equation} \label{3equ2}
\begin{aligned}
\int_{\Omega} k(x) (w_n + \epsilon)^{\beta + 1} \, dx 
&\leq C(\epsilon) \left\| k \right\|_{L^{\overline{K}}(\Omega)} 
\left( \left\| w_n \right\|_{W^{s, G}_{0}(\Omega)}^{\beta + 1} + 1 \right)  \leq C(\epsilon),
\end{aligned}
\end{equation}
where \( C(\epsilon) > 0 \) is a constant independent of \( n \).  Then, by applying Vitali's Convergence Theorem, we get that
\[
\int_{\Omega} \int_{0}^{w_n} f(x)(s + \epsilon)^{-\alpha} \, ds \, dx 
\longrightarrow \int_{\Omega} \int_{0}^{w} f(x)(s + \epsilon)^{-\alpha} \, ds \, dx 
\quad \text{as } n \to \infty,
\]
and
\[
\int_{\Omega} \int_{0}^{w_n} k(x)(s + \epsilon)^{\beta} \, ds \, dx 
\longrightarrow \int_{\Omega} \int_{0}^{w} k(x)(s + \epsilon)^{\beta} \, ds \, dx 
\quad \text{as } n \to \infty.
\]
By combining all the aforementioned results with Fatou's Lemma and relation \eqref{3equ1}, we deduce that
\begin{equation*}
\liminf_{n \to \infty} \mathcal{J}_{\epsilon}(w_n) \geq \mathcal{J}_{\epsilon}(w).
\end{equation*}
Therefore, in light of the above properties,  \( \mathcal{J}_{\epsilon} \) admits a minimizer \( w_{0} \) in the convex and closed set \( \mathcal{K} \).\\[4pt]
Now, let us consider a function \( \psi \in w_{0} + (W^{s, G}_{0}(\Omega) \cap L_{c}^{\infty}(\Omega)) \) such that \( \psi \in \mathcal{K} \). Define the function:
\[
\xi : [0, 1] \to \mathbb{R}, \quad \xi(\tau) = \mathcal{J}_{\epsilon}(\tau \psi + (1 - \tau) w_{0}).
\]
Then, we have
{\small \begin{equation}\label{3equ5}
\begin{gathered}
\begin{aligned}
& 0 \leq \dfrac{\xi(\tau) - \xi(0)}{\tau} = \dfrac{\mathcal{J}_{\epsilon}(w_{0} + \tau(\psi - w_{0})) - \mathcal{J}_{\epsilon}(w_{0}) }{s}\\[4pt]
&= \underbrace{ \dfrac{1}{\tau} \int_{\mathbb{R}^{N}} \int_{\mathbb{R}^{N}} \left[ 
G\left( \dfrac{\left| (w_{0} + \tau(\psi - w_{0}))(x) - (w_{0} + \tau(\psi - w_{0}))(y) \right|}{\left| x - y \right|^{s}} \right) 
- G\left( \dfrac{\left| w_{0}(x) - w_{0}(y) \right|}{\left| x - y \right|^{s}} \right) 
\right] \dfrac{dx\, dy}{\left| x - y \right|^{N}}  }_{\textbf{I}} \\[4pt]
&\quad -\dfrac{1}{\tau} \int_{\Omega} \left[ G_{\epsilon}(x, w_{0} + \tau(\psi - w_{0})) - G_{\epsilon}(x, w_{0}) \right] dx.
\end{aligned}
\end{gathered}
\end{equation}}

\noindent 
To pass to the limit in term \textbf{I}, we first recall the Lagrange Mean Value Theorem, which gives:
{\small \begin{align*}
& G\left( \frac{\left| (w_{0} + \tau(\psi - w_{0}))(x) - (w_{0} + \tau(\psi - w_{0}))(y) \right|}{\left| x - y \right|^{s}} \right) 
- G\left( \frac{\left| w_{0}(x) - w_{0}(y) \right|}{\left| x - y \right|^{s}} \right) \\[4pt]
&= g\left( \frac{\left| (w_{0} + C(\tau)(\psi - w_{0}))(x) - (w_{0} + C(\tau)(\psi - w_{0}))(y) \right|}{\left| x - y \right|^{s}} \right) \frac{ \left( w_{0} + C(\tau)(\psi - w_{0}) \right)(x) - \left( w_{0} + C(\tau)(\psi - w_{0}) \right)(y)}{\left| \left( w_{0} + C(\tau)(\psi - w_{0}) \right)(x) - \left( w_{0} + C(\tau)(\psi - w_{0}) \right)(y) \right|} \\[4pt]
&\quad \cdot \frac{ \tau \left( (\psi - w_{0})(x) - (\psi - w_{0})(y) \right)}{ \left| x - y \right|^{s} },
\end{align*}}

\noindent 
where \( C(\tau) \in (0, \tau) \). Then we have
\begin{equation}\label{3equ4}
\begin{gathered}
\begin{aligned}
&\lim_{\tau \to 0} \frac{1}{s} \left[ 
G\left( \frac{\left| (w_{0} + \tau(\psi - w_{0}))(x) - (w_{0} + \tau(\psi - w_{0}))(y) \right|}{\left| x - y \right|^{s}} \right) 
- G\left( \frac{\left| w_{0}(x) - w_{0}(y) \right|}{\left| x - y \right|^{s}} \right) 
\right]  \\[4pt] 
& = g\left( \frac{\left| w_{0}(x) - w_{0}(y)\right| }{\vert x - y \vert^{s}} \right) \frac{w_{0}(x) - w_{0}(y)}{\left| w_{0}(x) - w_{0}(y) \right|} \cdot \frac{[(\psi - w_{0})(x) - (\psi - w_{0})(y)]}{\vert x - y \vert^{s}}.
\end{aligned}
\end{gathered}
\end{equation}
On the other hand, for \( \tau \leq \tau_{0} \), by Young's inequality (see \eqref{equ3}), \eqref{3equ03}, and Lemma \ref{lemma1}, we have:
{\small \begin{align*}
&\frac{1}{\tau} \left[ G\left( \frac{\left| (w_{0} + \tau(\psi - w_{0}))(x) - (w_{0} + \tau(\psi - w_{0}))(y) \right|}{\left| x - y \right|^{s}} \right) 
- G\left( \frac{\left| w_{0}(x) - w_{0}(y) \right|}{\left| x - y \right|^{s}} \right)\right] \\[4pt] 
& \leq g\left( \frac{\left| (w_{0} + C(\tau)(\psi - w_{0}))(x) - (w_{0} + C(\tau)(\psi - w_{0}))(y) \right|}{\left| x - y \right|^{s}} \right) 
\cdot \frac{ (\psi - w_{0})(x) - (\psi - w_{0})(y) }{ \left| x - y \right|^{s} } \\[4pt] 
& \leq \overline{G}\left(g\left( \frac{\left| (w_{0} + C(\tau)(\psi - w_{0}))(x) - (w_{0} + C(\tau)(\psi - w_{0}))(y) \right|}{\left| x - y \right|^{s}} \right) \right) +  G\left(\frac{ \left| (\psi - w_{0})(x) - (\psi - w_{0})(y)\right| }{ \left| x - y \right|^{s} } \right) \\[4pt] 
& \leq (p^{+} - 1) G\left( \frac{\left| (w_{0} + C(\tau)(\psi - w_{0}))(x) - (w_{0} + C(\tau)(\psi - w_{0}))(y) \right|}{\left| x - y \right|^{s}} \right)+  G\left(\frac{ \left| (\psi - w_{0})(x) - (\psi - w_{0})(y)\right| }{ \left| x - y \right|^{s} } \right) \\[4pt] 
& \leq  C(\tau_{0}) \left[ G\left(\frac{\left|  w_{0}(x) - w_{0}(y)\right| }{ \left| x - y \right|^{s} } \right)  + G\left(\frac{\left|  (\psi - w_{0})(x) - (\psi - w_{0})(y)\right| }{ \left| x - y \right|^{s} } \right) \right] \in L^{1}(\Omega).
\end{align*}}

\noindent 
This fact, together with \eqref{3equ4} and the dominated convergence theorem, implies that
\begin{align*}
\lim_{\tau \to 0^{+}} \textbf{I} = \int_{\mathbb{R}^{N}} \int_{\mathbb{R}^{N}} g\left( \frac{\left| w_{0}(x) - w_{0}(y)\right| }{\left| x - y \right|^{s}} \right)
\frac{w_{0}(x) - w_{0}(y)}{\left| w_{0}(x) - w_{0}(y) \right|}
\cdot \frac{(\psi - w_{0})(x) - (\psi - w_{0})(y)}{\left| x - y \right|^{N+s}} \, dx \, dy.
\end{align*}
Finally, passing to the limit as \( \tau \to 0^{+} \) in \eqref{3equ5}, we obtain that \eqref{3comequ8} is true.

	\begin{claim}
		For every $ \psi \in W^{s, G}_{0}(\Omega)\cap L^{\infty}(\Omega),$ with $  \psi \geq 0 ,$ a.e. in \( \Omega \), it holds that:
		\begin{equation}\label{3comequ10}
		\begin{gathered}
		\begin{aligned}
		\int_{\mathbb{R}^{N}} \int_{\mathbb{R}^{N}}& g\left( \frac{\left| w_{0}(x) - w_{0}(y)\right| }{\vert x - y \vert^{s}} \right) \dfrac{w_{0}(x) - w_{0}(y)}{\left| w_{0}(x) - w_{0}(y) \right|} \cdot \frac{\psi(x) - \psi(y)}{\vert x - y \vert^{N + s}} \, dx \, dy \\[4pt]
		&\quad \geq \int_{\Omega} \left[ f(x) (w_{0} + \epsilon)^{-\alpha} + k(x) (w_{0} + \epsilon)^{\beta} \right] \psi\, dx.
		\end{aligned}
		\end{gathered}
		\end{equation}
	\end{claim}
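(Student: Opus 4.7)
The plan is to promote the constrained variational inequality~\eqref{3comequ8}, which only holds for perturbations preserving $\phi\in\mathcal{K}$, to the one-sided super-solution-type inequality~\eqref{3comequ10} for arbitrary nonnegative $\psi\in W^{s,G}_{0}(\Omega)\cap L^{\infty}(\Omega)$, by a perturbation--truncation scheme. The obstacle effect of the constraint $\phi\leq\overline{u}$ in the upward direction will be absorbed using the super-solution property of $\overline{u}$ itself. First I would reduce to the case $\operatorname{supp}\psi\Subset\Omega$ via the cutoff $\psi\,\eta_{n}$ with $\eta_{n}\in C_{c}^{\infty}(\Omega)$, $\eta_{n}\uparrow\mathbf{1}_{\Omega}$, passing to the limit by means of Lemmas~\ref{lemma2}--\ref{lemma4} and Remark~\ref{remark2}.

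For such a $\psi\geq 0$ with compact support and $\sigma\in(0,1)$ small, set
\[
\varphi_{\sigma} := (w_{0}+\sigma\psi-\overline{u})^{+}, \qquad \psi_{\sigma} := \min\{w_{0}+\sigma\psi,\overline{u}\} = w_{0}+\sigma\psi-\varphi_{\sigma}.
\]
Since $0\leq w_{0}\leq\overline{u}$ yields $0\leq\varphi_{\sigma}\leq\sigma\psi$, we have $\operatorname{supp}\varphi_{\sigma}\subset\operatorname{supp}\psi\Subset\Omega$, and combined with the $1$-Lipschitz estimate $|\varphi_{\sigma}(x)-\varphi_{\sigma}(y)|\leq|(w_{0}+\sigma\psi)(x)-(w_{0}+\sigma\psi)(y)|+|\overline{u}(x)-\overline{u}(y)|$ together with the local $W^{s,G}$-regularity of $\overline{u}$, this gives $\varphi_{\sigma}\in W^{s,G}_{0}(\Omega)\cap L^{\infty}_{c}(\Omega)$; similarly $\psi_{\sigma}\in\mathcal{K}$. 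Using $\psi_{\sigma}$ as a test function in~\eqref{3comequ8} and $\varphi_{\sigma}$ as a nonnegative test function in the super-solution inequality of Definition~\ref{definition1}(iii) for $\overline{u}$, subtracting and dividing by $\sigma$ leads to a relation of the form
\[
\mathcal{L}_{g}(w_{0})[\psi] - \int_{\Omega}\bigl[f(w_{0}+\epsilon)^{-\alpha}+k(w_{0}+\epsilon)^{\beta}\bigr]\psi\,dx \;\geq\; \frac{1}{\sigma}\bigl(\mathcal{L}_{g}(w_{0})-\mathcal{L}_{g}(\overline{u})\bigr)[\varphi_{\sigma}] + \frac{1}{\sigma}\int_{\Omega}\bigl(H^{\overline{u}}-H_{\epsilon}\bigr)\varphi_{\sigma}\,dx,
\]
where $\mathcal{L}_{g}$ denotes the nonlocal form on the left-hand side of~\eqref{3comequ10}, $H^{\overline{u}}=f\overline{u}^{-\alpha}+k\overline{u}^{\beta}$, and $H_{\epsilon}=f(w_{0}+\epsilon)^{-\alpha}+k(w_{0}+\epsilon)^{\beta}$.

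The main obstacle is to prove that the right-hand side has a nonnegative $\liminf$ as $\sigma\to 0^{+}$. The support of $\varphi_{\sigma}$ is $\Omega_{\sigma}:=\{w_{0}+\sigma\psi>\overline{u}\}$, on which $\overline{u}-\sigma\|\psi\|_{\infty}\leq w_{0}\leq\overline{u}$; in particular $w_{0}\to\overline{u}$ uniformly on $\Omega_{\sigma}$ and the sets $\Omega_{\sigma}$ decrease to $\{w_{0}=\overline{u},\psi>0\}$ as $\sigma\to 0^{+}$. For the $f$-part of the zero-order correction, once $\sigma\|\psi\|_{\infty}<\epsilon$ we have $\overline{u}<w_{0}+\epsilon$ on $\Omega_{\sigma}$, so $\overline{u}^{-\alpha}\geq(w_{0}+\epsilon)^{-\alpha}$ pointwise, yielding the correct sign; the $k$-part is controlled by H\"older's inequality in $L^{\overline K}(\Omega)\times L^{K}(\Omega)$ (Lemma~\ref{lemma3}) combined with $\|\varphi_{\sigma}\|_{L^{G_{*}}(\Omega)}\leq\sigma\|\psi\|_{L^{G_{*}}(\Omega)}$ and the structural restriction $\beta<p^{-}-1$. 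The delicate nonlocal piece $\sigma^{-1}\bigl(\mathcal{L}_{g}(w_{0})-\mathcal{L}_{g}(\overline{u})\bigr)[\varphi_{\sigma}]$ is handled by splitting $\mathbb{R}^{2N}$ according to whether the points $x,y$ lie in $\Omega_{\sigma}$ or in its complement and exploiting the pointwise monotonicity of $\tau\mapsto g(|\tau|)\tau/|\tau|$ (a consequence of \textbf{(H2)}) together with the orderings $w_{0}\leq\overline{u}$ everywhere and $\overline{u}<w_{0}+\sigma\psi$ on $\Omega_{\sigma}$, in the spirit of the hidden-convexity and Picone machinery of Propositions~\ref{proposition1} and~\ref{proposition2}. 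The uniform estimates already established in the proof of Claim~1 supply integrable majorants needed to apply the dominated convergence theorem; sending $\sigma\to 0^{+}$ and then removing the compact-support reduction on $\psi$ yields~\eqref{3comequ10}.
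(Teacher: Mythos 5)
Your blueprint matches the paper's: you construct the same pair $\varphi_{\sigma}=(w_{0}+\sigma\psi-\overline{u})^{+}$ and $\psi_{\sigma}=\min\{w_{0}+\sigma\psi,\overline{u}\}$, test $\psi_{\sigma}$ in \eqref{3comequ8}, test $\varphi_{\sigma}$ in the super-solution inequality for $\overline{u}$, subtract, divide by $\sigma$, and let $\sigma\to 0^{+}$. The reduction to compactly supported $\psi$ is also the same. However, there are two issues with how you propose to close the argument.

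First, the appeal to ``the hidden-convexity and Picone machinery of Propositions~\ref{proposition1} and~\ref{proposition2}'' for the nonlocal remainder is misplaced. The paper's actual tools are far more elementary: the monotonicity of $\tau\mapsto g(|\tau-\tau_{0}|)\,\tfrac{\tau-\tau_{0}}{|\tau-\tau_{0}|}$ (used, after splitting on $\{\overline{u}\leq w_{0}+\sigma\psi\}$ versus its complement, to pass from the $\overline{u}$-form to the $\psi_{\sigma}$-form against $\varphi_{\sigma}$), together with the coercivity inequality $\left(g(|b|)\tfrac{b}{|b|}-g(|a|)\tfrac{a}{|a|}\right)(b-a)\geq C\,G(|b-a|)\geq 0$ from \cite[Lemma~2.3]{bal2024singular}. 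The latter, applied with $a,b$ the increments of $\psi_{\sigma}$ and $w_{0}$, combined with $w_{0}-\psi_{\sigma}=\varphi_{\sigma}-\sigma\psi$, is exactly what removes the $\tfrac{1}{\sigma}$ in front and replaces $\varphi_{\sigma}/\sigma$ by $\psi$. Picone is used only in Claim~3 of this proof, not here.

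Second and more seriously, the sentence ``the uniform estimates already established in the proof of Claim~1 supply integrable majorants needed to apply the dominated convergence theorem'' hides a genuine gap. After the two monotonicity reductions one is left with showing that
\[
\iint_{\Gamma_{\mathrm{supp}(\varphi_{\sigma})}}\Bigl[g_{w_{0}}-g_{\psi_{\sigma}}\Bigr]\cdot\frac{\psi(x)-\psi(y)}{|x-y|^{N+s}}\,dx\,dy\longrightarrow 0
\]
as $\sigma\to 0^{+}$. On the cross-region $\Gamma^{2}_{\mathrm{supp}(\varphi_{\sigma})}=\mathrm{supp}(\varphi_{\sigma})\times\mathrm{supp}(\varphi_{\sigma})^{c}$, after the H\"older step one needs a $\sigma$-uniform bound on the $L^{\overline{G}}$-norm of $g\!\left(\tfrac{|w_{0}(x)-w_{0}(y)|+|\psi_{\sigma}(x)-\psi_{\sigma}(y)|}{|x-y|^{s}}\right)$. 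Claim~1 controls only $w_{0}$; but $\psi_{\sigma}$ involves $\overline{u}$, and $\overline{u}$ lies merely in $W^{s,G}_{\mathrm{loc}}(\Omega)$, so it is \emph{not} of finite energy and cannot be dominated out of the box. The paper's way around this is precisely Definition~\ref{definition1}(i)--(ii): since $\mathrm{supp}(\varphi_{\sigma})\Subset\mathrm{supp}(\psi)$ forces $\overline{u}\geq C_{\mathrm{supp}(\psi)}$ there, invoking \cite[Lemma~3.11]{bal2024singular} one converts the $G$-modular of the $\overline{u}$-increments into a $G$-modular of $\Psi(\overline{u})$-increments, and it is the hypothesis $\Psi(\overline{u})\in W^{s,G}_{0}(\Omega)$ that finally produces a $\sigma$-independent finite majorant. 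Your sketch never reaches this step, and without it the passage to the limit is unjustified.
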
 
\noindent
\noindent
\noindent
In fact, since \( \Omega \) is a domain with a continuous boundary, the space \( C^{\infty}_{c}(\Omega) \) is dense in \( W^{s, G}_{0}(\Omega) \) (see \cite{baalal2024density}). So, it is sufficient to verify \eqref{3comequ10} for any non-negative function \( \psi \in C^{\infty}_{c}(\Omega) \). To this end, for \( t \in (0, 1) \), we define
\[
\psi_{t} := \min \left\lbrace w_{0} + t \psi, \, \overline{u} \right\rbrace, \quad \text{and} \quad w_{t} := \left(w_{0} + t \psi - \overline{u}\right)^{+}.
\]
It is readily seen that \( w_{t} \leq t \psi \). Moreover, \( w_{t} \in W^{s, G}_{0}(\Omega) \cap \mathcal{K} \) for sufficiently small values of \( t \). Indeed, since \( w_{0} \in \mathcal{K} \), it follows that \( w_{t} = 0 \) in \( \mathbb{R}^{N} \setminus \operatorname{supp}(\psi) \). Also, since the map  $  \mathbb{R} \ni \tau \to \tau^{+}, $ is a Lipschitz function, we remark
 that
	\begin{equation}\label{3equ3}
	w_{t} \in W^{s, G}_{\text{loc}}(\Omega).
	\end{equation}
\noindent
Hence, by Lemma~\ref{lemma2}, and considering a compact set \( K \subset \Omega \) such that \( \operatorname{supp}(\psi) \Subset K \), we deduce that
\begin{align*}
& \min\left\lbrace \left\| w_{t}\right\|^{p^{-}} _{W^{s, G}_{0}(\Omega)},  \left\| w_{t}\right\|^{p^{+ }} _{W^{s, G}_{0}(\Omega)}\right\rbrace \leq  \int_{\mathbb{R}^{N}}\int_{\mathbb{R}^{N}} G\left(  \dfrac{\left| w_{t}(x) - w_{t}(y) \right|}{\left| x - y \right|^{s}}\right) \dfrac{dx dy}{\left| x - y\right| ^{N}}  \\[4pt]
&= \int_{K} \int_{K}  G\left(  \dfrac{\left| w_{t}(x) - w_{t}(y) \right|}{\left| x - y \right|^{s}}\right) \dfrac{dx dy}{\left| x - y\right| ^{N}}
+  2 \int_{\operatorname{supp}(\psi)} \left( \int_{\mathbb{R}^{N} \setminus K} G\left(  \dfrac{\left| w_{t}(x) \right|}{\left| x - y \right|^{s}}\right) \dfrac{dy}{\left| x - y\right| ^{N}} \right) dx.
\end{align*}
On the other hand, from \eqref{3equ3}, we observe that the first term on the second line is finite. Moreover, we have
\begin{align*}
&\int_{\operatorname{supp}(\psi)} \left( \int_{\mathbb{R}^{N} \setminus K} G\left(  \dfrac{\left| w_{t}(x) \right|}{\left| x - y \right|^{s}}\right) \dfrac{dy}{\left| x - y\right| ^{N}} \right) dx \leq \\[4pt]
&\sup_{x \in \operatorname{supp}(\psi)} \int_{\mathbb{R}^{N} \setminus K}\left[  \max\left\lbrace \dfrac{1}{\left| x - y \right|^{sp^{-}}} , \dfrac{1}{\left| x - y \right|^{sp^{+}}} \right\rbrace  \dfrac{dy}{\left| x - y \right|^{N}} \right]
\max\left\lbrace \left\| w_{t}\right\|^{p^{-}} _{L^{G}(\operatorname{supp}(\psi))},  \left\| w_{t}\right\|^{p^{+ }} _{L^{G}(\operatorname{supp}(\psi))}\right\rbrace < \infty.
\end{align*}
Additionally, we note that
\begin{equation}\label{equ32}
\psi_{t} - w_{0} = t \psi - w_{t}.
\end{equation}
Hence, we conclude that \( \psi_{t} \in w_{0} + \left( W^{s, G}_{0}(\Omega) \cap L_{c}^{\infty}(\Omega) \right) \) and, in particular, \( \psi_{t} \in \mathcal{K} \). Therefore, from \eqref{3comequ8}, we obtain
\begin{equation*}
\begin{aligned}
\int_{\mathbb{R}^{N}} \int_{\mathbb{R}^{N}} & g\left( \frac{\left| w_{0}(x) - w_{0}(y) \right| }{\left| x - y \right|^{s}} \right) 
\frac{w_{0}(x) - w_{0}(y)}{\left| w_{0}(x) - w_{0}(y) \right|} \cdot 
\frac{[(\psi_{t} - w_{0})(x) - (\psi_{t} - w_{0})(y)]}{\left| x - y \right|^{N + s}} \, dx \, dy \\[4pt]
&\geq \int_{\Omega} \left[ f(x) (w_{0} + \epsilon)^{-\alpha} + k(x) (w_{0} + \epsilon)^{\beta} \right] (\psi_{t} - w_{0}) \, dx.
\end{aligned}
\end{equation*}
Combining this with \eqref{equ32}, we infer that
\begin{equation*}
\begin{aligned}
\int_{\mathbb{R}^{N}} \int_{\mathbb{R}^{N}} & g\left( \frac{\left| w_{0}(x) - w_{0}(y) \right|}{\left| x - y \right|^{s}} \right) 
\frac{w_{0}(x) - w_{0}(y)}{\left| w_{0}(x) - w_{0}(y) \right|} \cdot 
\frac{\psi(x) - \psi(y)}{\left| x - y \right|^{N + s}} \, dx \, dy \\[6pt]
&\quad - \int_{\Omega} \left[ f(x) (w_{0} + \epsilon)^{-\alpha} + k(x) (w_{0} + \epsilon)^{\beta} \right] \psi(x) \, dx \\[6pt]
&\geq \frac{1}{t} \left( 
\int_{\mathbb{R}^{N}} \int_{\mathbb{R}^{N}} 
g\left( \frac{\left| w_{0}(x) - w_{0}(y) \right|}{\left| x - y \right|^{s}} \right) 
\frac{w_{0}(x) - w_{0}(y)}{\left| w_{0}(x) - w_{0}(y) \right|} \cdot 
\frac{w_{t}(x) - w_{t}(y)}{\left| x - y \right|^{N + s}} \, dx \, dy \right. \\[6pt]
&\qquad \left. - \int_{\Omega} \left[ f(x) (w_{0} + \epsilon)^{-\alpha} + k(x) (w_{0} + \epsilon)^{\beta} \right] w_{t}(x) \, dx \right).
\end{aligned}
\end{equation*}
Since $\overline{u}$ is a weak super-solution to problem \eqref{GP}, we obtain
{\footnotesize 	\begin{equation}\label{equ5} 
\begin{gathered}
\begin{aligned}
&\int_{\mathbb{R}^{N}} \int_{\mathbb{R}^{N}}  g\left( \frac{\left| w_{0}(x) - w_{0}(y) \right|}{\left| x - y \right|^{s}} \right) 
\frac{w_{0}(x) - w_{0}(y)}{\left| w_{0}(x) - w_{0}(y) \right|} \cdot 
\frac{\psi(x) - \psi(y)}{\left| x - y \right|^{N + s}} \, dx \, dy \\[6pt]
&-\int_{\Omega} \left[ f(x) (w_{0} + \epsilon)^{-\alpha} + k(x) (w_{0} + \epsilon)^{\beta} \right] \psi(x) \, dx \\[6pt]
&\geq   \underbrace{ \dfrac{1}{t} 
	\iint_{\mathbb{R}^{2N}} 
	\left[ g\left( \frac{\left| w_{0}(x) - w_{0}(y) \right|}{\left| x - y \right|^{s}} \right) 
	\frac{w_{0}(x) - w_{0}(y)}{\left| w_{0}(x) - w_{0}(y) \right|} - g\left( \frac{\left| \overline{u}(x) - \overline{u}(y) \right|}{\left| x - y \right|^{s}} \right) 
	\frac{\overline{u}(x) - \overline{u}(y)}{\left|\overline{u}(x) - \overline{u}(y) \right|} \right] \cdot 
	\frac{w_{t}(x) - w_{t}(y)}{\left| x - y \right|^{N + s}} \, dx \, dy  }_{\textbf{I}_{1}} \\[4pt]
& + \underbrace{ \dfrac{1}{t} \left( {\displaystyle\int_{\Omega}}\left( f(x)\,\left(\overline{u}^{-\alpha} - (w_{0} + \epsilon)^{-\alpha} \right) + k(x) \left(\overline{u}^{\beta} - (w_{0} + \epsilon)^{\beta} \right) \right) \,w_{t}\,dx\right) }_{\textbf{I}_{2}} .
\end{aligned}
\end{gathered}
\end{equation}}

\noindent
\textbf{Estimate of $\mathbf{I}_{1}$.} We begin by defining the sets
\[
\omega_{1} = \left\lbrace x \in \Omega \mid \overline{u}(x) \leq (w_{0} + t \psi)(x) \right\rbrace \quad \text{and} \quad 
\omega_{2} = \left\lbrace x \in \Omega \mid \overline{u}(x) \geq (w_{0} + t \psi)(x) \right\rbrace.
\]
By exploiting the monotonicity of the mapping
\[
\tau \mapsto g(\left| \tau - \tau_{0}\right|) \dfrac{\tau - \tau_{0}}{\left| \tau - \tau_{0}\right|},
\]
we deduce that
	\begin{align*}
	\int_{\mathbb{R}^{N}} \int_{\mathbb{R}^{N}} &
	g\left( \frac{\left| \psi_{t}(x) - \psi_{t}(y) \right|}{\left| x - y \right|^{s}} \right) 
	\frac{\psi_{t}(x) - \psi_{t}(y)}{\left| \psi_{t}(x) - \psi_{t}(y) \right|} \cdot 
	\frac{w_{t}(x) - w_{t}(y)}{\left| x - y \right|^{N + s}} \, dx \, dy\\[4pt]
	&  =  \displaystyle\iint_{\omega_{1}\times \omega_{1}}  
	g\left( \frac{\left| \psi_{t}(x) - \psi_{t}(y) \right|}{\left| x - y \right|^{s}} \right) 
	\frac{\psi_{t}(x) - \psi_{t}(y)}{\left| \psi_{t}(x) - \psi_{t}(y) \right|} \cdot 
	\frac{w_{t}(x) - w_{t}(y)}{\left| x - y \right|^{N + s}} \, dx \, dy\\[4pt]
	& \quad+ \displaystyle\iint_{\omega_{1} \times \omega_{2}}  
	g\left( \frac{\left| \psi_{t}(x) - \psi_{t}(y) \right|}{\left| x - y \right|^{s}} \right) 
	\frac{\psi_{t}(x) - \psi_{t}(y)}{\left| \psi_{t}(x) - \psi_{t}(y) \right|} \cdot 
	\frac{w_{t}(x) - w_{t}(y)}{\left| x - y \right|^{N + s}} \, dx \, dy\\[4pt]
	&\quad +  \displaystyle\iint_{\omega_{2} \times \omega_{1}}  
	g\left( \frac{\left| \psi_{t}(x) - \psi_{t}(y) \right|}{\left| x - y \right|^{s}} \right) 
	\frac{\psi_{t}(x) - \psi_{t}(y)}{\left| \psi_{t}(x) - \psi_{t}(y) \right|} \cdot 
	\frac{w_{t}(x) - w_{t}(y)}{\left| x - y \right|^{N + s}} \, dx \, dy \\[4pt]
	&\geq \displaystyle\iint_{\omega_{1}\times \omega_{1}}  
	g\left( \frac{\left| \overline{u}(x) - \overline{u}(y) \right|}{\left| x - y \right|^{s}} \right) 
	\frac{\overline{u}(x) - \overline{u}(y)}{\left| \overline{u}(x) - \overline{u}(y) \right|} \cdot 
	\frac{w_{t}(x) - w_{t}(y)}{\left| x - y \right|^{N + s}} \, dx \, dy\\[4pt]
	&\quad + \displaystyle\iint_{\omega_{1} \times \omega_{2}}  
	g\left( \frac{\left| \overline{u}(x) - \overline{u}(y) \right|}{\left| x - y \right|^{s}} \right) 
	\frac{\overline{u}(x) - \overline{u}(y)}{\left|\overline{u}(x) - \overline{u}(y) \right|} \cdot 
	\frac{w_{t}(x) - w_{t}(y)}{\left| x - y \right|^{N + s}} \, dx \, dy\\[4pt]
	&\quad +  \displaystyle\iint_{\omega_{2} \times \omega_{1}}  
	g\left( \frac{\left| \overline{u}(x) - \overline{u}(y) \right|}{\left| x - y \right|^{s}} \right) 
	\frac{\overline{u}(x) - \overline{u}(y)}{\left| \overline{u}(x) - \overline{u}(y) \right|} \cdot 
	\frac{w_{t}(x) - w_{t}(y)}{\left| x - y \right|^{N + s}} \, dx \, dy \\[4pt]
	& = \int_{\mathbb{R}^{N}} \int_{\mathbb{R}^{N}} 
	g\left( \frac{\left| \overline{u}(x) - \overline{u}(y) \right|}{\left| x - y \right|^{s}} \right) 
	\frac{\overline{u}(x) -\overline{u}(y)}{\left| \overline{u}(x) - \overline{u}(y) \right|} \cdot 
	\frac{w_{t}(x) - w_{t}(y)}{\left| x - y \right|^{N + s}} \, dx \, dy.
	\end{align*}
Now, using the inequality (see \cite[Lemma 2.3]{bal2024singular}), there exists a constant \( C > 0 \), depending on \( G \), such that
\begin{align}\label{3equ6}
\left( g\left( \left| b \right| \right) \dfrac{b}{\left| b \right|} - g\left( \left| a \right| \right) \dfrac{a}{\left| a \right|} \right)(b - a) 
\geq C\, G\left( \left| b - a \right| \right) \geq 0, \quad \forall\, a, b \in \mathbb{R},
\end{align}
and hence we obtain, by defining 
\[
\Gamma_{\mathrm{supp}(w_{t})} := \mathbb{R}^{2N} \setminus \left( \mathrm{supp}(w_{t}) \right)^{c} \times \left( \mathrm{supp}(w_{t}) \right)^{c},
\]
the following estimate:
{\small
	\begin{align*}
	&\iint_{\Gamma_{\mathrm{supp}(w_{t})}} 
	\left[
	g\left( \frac{\left| w_{0}(x) - w_{0}(y) \right|}{\left| x - y \right|^{s}} \right) 
	\frac{w_{0}(x) - w_{0}(y)}{\left| w_{0}(x) - w_{0}(y) \right|} - 
	g\left( \frac{\left| \psi_{t}(x) - \psi_{t}(y) \right|}{\left| x - y \right|^{s}} \right) 
	\frac{\psi_{t}(x) - \psi_{t}(y)}{\left| \psi_{t}(x) - \psi_{t}(y) \right|}
	\right] \cdot 
	\frac{w_{0}(x) - w_{0}(y)}{\left| x - y \right|^{N + s}} \, dx \, dy \\[4pt]
	&\geq 
	\iint_{\Gamma_{\mathrm{supp}(w_{t})}} 
	\left[
	g\left( \frac{\left| w_{0}(x) - w_{0}(y) \right|}{\left| x - y \right|^{s}} \right) 
	\frac{w_{0}(x) - w_{0}(y)}{\left| w_{0}(x) - w_{0}(y) \right|} - 
	g\left( \frac{\left| \psi_{t}(x) - \psi_{t}(y) \right|}{\left| x - y \right|^{s}} \right) 
	\frac{\psi_{t}(x) - \psi_{t}(y)}{\left| \psi_{t}(x) - \psi_{t}(y) \right|}
	\right] \cdot 
	\frac{\psi_{t}(x) - \psi_{t}(y)}{\left| x - y \right|^{N + s}} \, dx dy.
	\end{align*}
}

\noindent
By combining the above estimates with~\eqref{equ32}, and defining
\[
\Gamma^{1}_{\mathrm{supp}(w_{t})} := \mathrm{supp}(w_{t}) \times \mathrm{supp}(w_{t}), 
\quad \text{and} \quad 
\Gamma^{2}_{\mathrm{supp}(w_{t})} := \mathrm{supp}(w_{t}) \times \mathrm{supp}(w_{t})^{c},
\]
we deduce that
{\scriptsize 
	\begin{equation}\label{equ33}
	\begin{aligned}
	\mathbf{I}_{1} & \geq \iint_{\Gamma_{\mathrm{supp}(w_{t})}} 
	\left[ g\left( \frac{\left| w_{0}(x) - w_{0}(y) \right|}{\left| x - y \right|^{s}} \right) 
	\frac{w_{0}(x) - w_{0}(y)}{\left| w_{0}(x) - w_{0}(y) \right|} 
	- g\left( \frac{\left| \psi_{t}(x) - \psi_{t}(y) \right|}{\left| x - y \right|^{s}} \right) 
	\frac{\psi_{t}(x) - \psi_{t}(y)}{\left| \psi_{t}(x) - \psi_{t}(y) \right|} \right]  \cdot \frac{\psi(x) - \psi(y)}{\left| x - y \right|^{N + s}} \, dx \, dy \\[4pt]
	&= \iint_{\Gamma^{1}_{\mathrm{supp}(w_{t})}} 
	\left[ g\left( \frac{\left| w_{0}(x) - w_{0}(y) \right|}{\left| x - y \right|^{s}} \right) 
	\frac{w_{0}(x) - w_{0}(y)}{\left| w_{0}(x) - w_{0}(y) \right|} 
	- g\left( \frac{\left| \overline{u}(x) - \overline{u}(y) \right|}{\left| x - y \right|^{s}} \right) 
	\frac{\overline{u}(x) - \overline{u}(y)}{\left| \overline{u}(x) - \overline{u}(y) \right|} \right] \cdot \frac{\psi(x) - \psi(y)}{\left| x - y \right|^{N + s}} \, dx \, dy \\[4pt]
	&+ 2 \iint_{\Gamma^{2}_{\mathrm{supp}(w_{t})}} 
	\left[ g\left( \frac{\left| w_{0}(x) - w_{0}(y) \right|}{\left| x - y \right|^{s}} \right) 
	\frac{w_{0}(x) - w_{0}(y)}{\left| w_{0}(x) - w_{0}(y) \right|} 
	- g\left( \frac{\left| \psi_{t}(x) - \psi_{t}(y) \right|}{\left| x - y \right|^{s}} \right) 
	\frac{\psi_{t}(x) - \psi_{t}(y)}{\left| \psi_{t}(x) - \psi_{t}(y) \right|} \right] \cdot \frac{\psi(x) - \psi(y)}{\left| x - y \right|^{N + s}} \, dx \, dy \\[4pt]
	&\quad \longrightarrow 0 \quad \text{as } t \to 0^{+}.
	\end{aligned}
	\end{equation}
}

\noindent
Indeed, since \( \text{meas}(\mathrm{supp}(w_{t})) \to 0 \) as \( t \to 0^{+} \), it is straightforward to verify that
{\small
	\begin{equation*}
	\begin{aligned}
	\iint_{\Gamma^{1}_{\mathrm{supp}(w_{t})}} & 
	\left[ g\left( \frac{\left| w_{0}(x) - w_{0}(y) \right|}{\left| x - y \right|^{s}} \right) 
	\frac{w_{0}(x) - w_{0}(y)}{\left| w_{0}(x) - w_{0}(y) \right|} - 
	g\left( \frac{\left| \overline{u}(x) - \overline{u}(y) \right|}{\left| x - y \right|^{s}} \right) 
	\frac{\overline{u}(x) - \overline{u}(y)}{\left| \overline{u}(x) - \overline{u}(y) \right|} \right] \cdot 
	\frac{\psi(x) - \psi(y)}{\left| x - y \right|^{N + s}} \, dx \, dy \\[4pt]
	&\longrightarrow 0 \quad \text{as } t \to 0^{+}.
	\end{aligned}
	\end{equation*}
}

\noindent
Now, by employing the following inequality (see \cite[Lemma 3.10]{bal2024singular}), valid for some constant \( C > 0 \),
\[
\left| g(|a|)\, \frac{a}{|a|} - g(|b|)\, \frac{b}{|b|} \right| \leq C\, g(|a| + |b|), \quad \forall\, a, b \in \mathbb{R},
\]
and applying H\"{o}lder's inequality (see Lemma~\ref{lemma3}) with respect to the measure \( \frac{dx\,dy}{|x - y|^N} \), we deduce that
{\scriptsize
	\begin{equation}\label{3equ36}
	\begin{aligned}
	& \left| \iint_{\Gamma^{2}_{\mathrm{supp}(w_{t})}} 
	\left[ g\left( \frac{\left| w_{0}(x) - w_{0}(y) \right|}{\left| x - y \right|^{s}} \right) 
	\frac{w_{0}(x) - w_{0}(y)}{\left| w_{0}(x) - w_{0}(y) \right|} 
	- g\left( \frac{\left| \psi_{t}(x) - \psi_{t}(y) \right|}{\left| x - y \right|^{s}} \right) 
	\frac{\psi_{t}(x) - \psi_{t}(y)}{\left| \psi_{t}(x) - \psi_{t}(y) \right|} \right] 
	\cdot \frac{\psi(x) - \psi(y)}{\left| x - y \right|^{N + s}} \, dx \, dy \right| \\[4pt]
	& \leq \iint_{\Gamma^{2}_{\mathrm{supp}(w_{t})}} 
	\left| 
	g\left( \frac{\left| w_{0}(x) - w_{0}(y) \right|}{\left| x - y \right|^{s}} \right) 
	\frac{w_{0}(x) - w_{0}(y)}{\left| w_{0}(x) - w_{0}(y) \right|} 
	- g\left( \frac{\left| \psi_{t}(x) - \psi_{t}(y) \right|}{\left| x - y \right|^{s}} \right) 
	\frac{\psi_{t}(x) - \psi_{t}(y)}{\left| \psi_{t}(x) - \psi_{t}(y) \right|} 
	\right| \cdot 
	\frac{\left| \psi(x) - \psi(y) \right|}{\left| x - y \right|^{N + s}} \, dx \, dy \\[4pt]
	& \leq C \iint_{\Gamma^{2}_{\mathrm{supp}(w_{t})}} 
	g\left( \frac{\left| w_{0}(x) - w_{0}(y) \right| + \left| \psi_{t}(x) - \psi_{t}(y) \right|}{\left| x - y \right|^{s}} \right) 
	\cdot \frac{\left| \psi(x) - \psi(y) \right|}{\left| x - y \right|^{N + s}} \, dx \, dy \\[4pt]
	& \leq C \left\|g\left( \frac{\left| w_{0}(x) - w_{0}(y) \right| + \left| \psi_{t}(x) - \psi_{t}(y) \right|}{\left| x - y \right|^{s}} \right) \right\| _{L^{\overline{G}}(\Gamma^{2}_{\mathrm{supp}(w_{t})}, \frac{dx\,dy}{|x - y|^N})} \left\|\frac{ \psi(x) - \psi(y)}{\left| x - y \right|^{s}}\right\| _{L^{G}(\Gamma^{2}_{\mathrm{supp}(w_{t})}, \frac{dx\,dy}{|x - y|^N})}.
	\end{aligned}
	\end{equation}
}

\noindent
Now, we distinguish two cases:\\[4pt]
\textbf{Case 1:} If
\[
\left\|g\left( \frac{\left| w_{0}(x) - w_{0}(y) \right| + \left| \psi_{t}(x) - \psi_{t}(y) \right|}{\left| x - y \right|^{s}} \right) \right\| _{L^{\overline{G}}(\Gamma^{2}_{\mathrm{supp}(w_{t})}, \frac{dx\,dy}{|x - y|^N})} \leq 1,
\]
then we obtain
{\footnotesize 
	\begin{equation*}
	\begin{aligned}
	& \left| \iint_{\Gamma^{2}_{\mathrm{supp}(w_{t})}} 
	\left[ g\left( \frac{\left| w_{0}(x) - w_{0}(y) \right|}{\left| x - y \right|^{s}} \right) 
	\frac{w_{0}(x) - w_{0}(y)}{\left| w_{0}(x) - w_{0}(y) \right|} 
	- g\left( \frac{\left| \psi_{t}(x) - \psi_{t}(y) \right|}{\left| x - y \right|^{s}} \right) 
	\frac{\psi_{t}(x) - \psi_{t}(y)}{\left| \psi_{t}(x) - \psi_{t}(y) \right|} \right] 
	\cdot \frac{\psi(x) - \psi(y)}{\left| x - y \right|^{N + s}} \, dx \, dy \right| \\[4pt]
	& \leq C \left\| \frac{ \psi(x) - \psi(y)}{\left| x - y \right|^{s}} \right\|_{L^{G}(\Gamma^{2}_{\mathrm{supp}(w_{t})}, \frac{dx\,dy}{|x - y|^N})} \longrightarrow 0 \text{ as } t \to 0^{+}.
	\end{aligned}
	\end{equation*}
}

\noindent
\textbf{Case 2:} Otherwise, since \( \mathrm{supp}(w_{t}) \Subset \mathrm{supp}(\psi) \) and according to Definition~\ref{definition1}, there exists a constant \( C_{\mathrm{supp}(\psi)} > 0 \), independent of \( t \), such that \( \overline{u} > C_{\mathrm{supp}(\psi)} \) in \( \mathrm{supp}(w_{t}) \). Hence, by Lemma~\ref{lemma2}, estimate~\eqref{3equ03}, and \cite[Lemma 3.11]{bal2024singular}, we obtain

{\small
	\begin{equation*}\label{3equ37}
	\begin{aligned}
	& \left\| g\left( \frac{ \left| w_{0}(x) - w_{0}(y) \right| + \left| \psi_{t}(x) - \psi_{t}(y) \right| }{ \left| x - y \right|^{s} } \right) \right\|_{L^{\overline{G}}(\Gamma^{2}_{\mathrm{supp}(w_{t})}, \frac{dx\,dy}{|x - y|^N})} \\[4pt]
	& \leq \left( \iint_{\Gamma^{2}_{\mathrm{supp}(w_{t})}} 
	\overline{G}\left( g\left( \frac{ \left| w_{0}(x) - w_{0}(y) \right| + \left| \psi_{t}(x) - \psi_{t}(y) \right| }{ \left| x - y \right|^{s} } \right) \right) 
	\frac{dx \, dy}{\left| x - y \right|^{N}} \right)^{\frac{1}{p^{-}}} \\[4pt]
	& \leq (p^{+} - 1)^{\frac{1}{p^{-}}}\left( \iint_{\Gamma^{2}_{\mathrm{supp}(w_{t})}} 
	G\left( \frac{ \left| w_{0}(x) - w_{0}(y) \right| + \left| \psi_{t}(x) - \psi_{t}(y) \right| }{ \left| x - y \right|^{s} } \right) 
	\frac{dx \, dy}{\left| x - y \right|^{N}} \right)^{\frac{1}{p^{-}}} \\[4pt]
	& \leq C \left( 
	\iint_{\Gamma^{2}_{\mathrm{supp}(w_{t})}} 
	G\left( \frac{ \left| w_{0}(x) - w_{0}(y) \right| }{ \left| x - y \right|^{s} } \right) 
	\frac{dx \, dy}{\left| x - y \right|^{N}} + \iint_{\Gamma^{2}_{\mathrm{supp}(w_{t})}} 
	G\left( \frac{ \left| \psi(x) - \psi(y) \right| }{ \left| x - y \right|^{s} } \right) 
	\frac{dx \, dy}{\left| x - y \right|^{N}}  \right. \\[4pt]
	& \qquad+\left. 
	\iint_{\Gamma^{2}_{\mathrm{supp}(w_{t})}} 
	G\left( \frac{ \left| \Psi(\overline{u}(x)) - \Psi(\overline{u}(y)) \right| }{ \left| x - y \right|^{s} } \right) 
	\frac{dx \, dy}{\left| x - y \right|^{N}} 
	\right)^{\frac{1}{p^{-}}} < \infty,
	\end{aligned}
	\end{equation*}
}

\noindent
where \( C \) depends on \( p^{-} \), \( p^{+} \), \( \Psi \), and \( C_{\mathrm{supp}(\psi)} \), with \( \Psi \in \mathbf{FC} \). Hence, by applying Lemma~\ref{lemma2}, we deduce that
{\footnotesize 
	\begin{equation*}
	\begin{aligned}
	& \left| \iint_{\Gamma^{2}_{\mathrm{supp}(w_{t})}} 
	\left[ g\left( \frac{\left| w_{0}(x) - w_{0}(y) \right|}{\left| x - y \right|^{s}} \right) 
	\frac{w_{0}(x) - w_{0}(y)}{\left| w_{0}(x) - w_{0}(y) \right|} 
	- g\left( \frac{\left| \psi_{t}(x) - \psi_{t}(y) \right|}{\left| x - y \right|^{s}} \right) 
	\frac{\psi_{t}(x) - \psi_{t}(y)}{\left| \psi_{t}(x) - \psi_{t}(y) \right|} \right] 
	\cdot \frac{\psi(x) - \psi(y)}{\left| x - y \right|^{N + s}} \, dx \, dy \right| \\[4pt]
	& \leq C \left\| \frac{ \psi(x) - \psi(y)}{\left| x - y \right|^{s}} \right\|_{L^{G}(\Gamma^{2}_{\mathrm{supp}(w_{t})}, \frac{dx\,dy}{|x - y|^N})} \longrightarrow 0 \quad \text{as } t \to 0^{+}.
	\end{aligned}
	\end{equation*}
}

\noindent
\textbf{Estimate of $\textbf{I}_3$.} Since $w_t \leq t \psi$, we observe that
\begin{equation}\label{equ30}
\begin{aligned}
\textbf{I}_3 &\geq - \int_{\operatorname{supp}(w_t)} \left( f(x) \left| \overline{u}^{-\alpha} - (w_0 + \epsilon)^{-\alpha} \right| + k(x) \left| \overline{u}^{\beta} - (w_0 + \epsilon)^{\beta} \right| \right) \psi\, dx \\[4pt]
&\quad \longrightarrow 0 \quad \text{as } t \to 0^{+}.
\end{aligned}
\end{equation}
Combining \eqref{equ33} and \eqref{equ30}, and passing to the limit in \eqref{equ5} as $t \to 0^{+}$, we obtain
\begin{equation}\label{equ6}
\begin{aligned}
\int_{\mathbb{R}^{N}} \int_{\mathbb{R}^{N}}& g\left( \frac{\left| w_{0}(x) - w_{0}(y)\right| }{\vert x - y \vert^{s}} \right) \dfrac{w_{0}(x) - w_{0}(y)}{\left| w_{0}(x) - w_{0}(y) \right|} \cdot \frac{\psi(x) - \psi(y)}{\vert x - y \vert^{N + s}} \, dx \, dy \\[4pt]
&\quad \geq \int_{\Omega} \left[ f(x) (w_{0} + \epsilon)^{-\alpha} + k(x) (w_{0} + \epsilon)^{\beta} \right] \psi\, dx,
\end{aligned}
\end{equation}
for all \( \psi \in C_c^{\infty}(\Omega) \) with \( \psi \geq 0 \) almost everywhere in \( \Omega \). By employing classical density arguments, we conclude that inequality \eqref{equ6} remains valid for any \( \psi \in W_0^{s, G}(\Omega) \cap L_{c}^{\infty}(\Omega) \) satisfying \( \psi \geq 0 \) a.e. in \( \Omega \). Indeed, let \( (\psi_{\tau,n}) \subset C_c^{\infty}(\Omega) \) be a sequence such that
\[
\psi_{\tau,n} \xrightarrow[\tau \to 0]{} \psi_n \xrightarrow[n \to \infty]{} \psi \quad \text{in } W_0^{s, G}(\Omega),
\]
where \( \operatorname{supp}(\psi_n) \Subset \Omega \) and \( 0 \leq \psi_n \leq \psi \) for all \( n \in \mathbb{N} \). Passing first to the limit as \( \tau \to 0 \), and then as \( n \to \infty \), yields the desired result.
\begin{claim}
	For every $ \epsilon > 0 $, we have $ \underline{u} \leq w_0 + \epsilon $.
\end{claim}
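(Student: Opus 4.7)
The goal is to show that the nonnegative function $\phi := (\underline{u} - (w_{0} + \epsilon))^{+}$ vanishes identically in $\Omega$. My strategy is to test both the sub-solution inequality for $\underline{u}$ and the variational inequality~\eqref{3comequ10} for $w_{0}$ against (a truncation and localization of) $\phi$, and then subtract. The key structural observation is that $(-\Delta)^{s}_{g}$ is translation invariant in the unknown, so \eqref{3comequ10} is equivalent to saying that $w_{0}+\epsilon$ is itself a weak super-solution of~\eqref{GP}. Admissibility of $\phi$ as a test function follows from the pointwise bound $0 \leq \phi \leq (\underline{u} - \epsilon)^{+}$ together with Remark~\ref{remark2}, which gives $(\underline{u} - \epsilon)^{+} \in W^{s,G}_{0}(\Omega)$; combined with $w_{0} \in W^{s,G}_{0}(\Omega)$ and Lemma~\ref{lemma2}, one obtains $\phi \in W^{s,G}_{0}(\Omega)$. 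To match the $L_{c}^{\infty}(\Omega)$ requirement of Definition~\ref{definition1}, I would first test against the approximations $\phi_{k,n} := \min\{\phi, k\}\cdot\eta_{n}$, where $\eta_{n} \in C_{c}^{\infty}(\Omega)$ is a standard interior cutoff, and then pass to the limit $n \to \infty$ followed by $k \to \infty$ by dominated convergence, relying on conditions \textbf{(A1)}--\textbf{(A3)} on $f$ and on $k \in L^{\overline{K}}(\Omega)$ to ensure uniform integrability of the right-hand side.

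After subtracting, one arrives in the limit at
\begin{equation*}
\int_{\mathbb{R}^{N}}\!\!\int_{\mathbb{R}^{N}}\!\bigl[\mathcal{A}(\underline{u}) - \mathcal{A}(w_{0})\bigr]\dfrac{\phi(x)-\phi(y)}{|x-y|^{N+s}}\,dx\,dy \leq \int_{\Omega}\!\bigl[f\bigl(\underline{u}^{-\alpha} - (w_{0}+\epsilon)^{-\alpha}\bigr) + k\bigl(\underline{u}^{\beta} - (w_{0}+\epsilon)^{\beta}\bigr)\bigr]\phi\,dx,
\end{equation*}
with the shorthand $\mathcal{A}(v)(x,y) := g\bigl(|v(x)-v(y)|/|x-y|^{s}\bigr)\sgn(v(x)-v(y))$. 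A pointwise four-case analysis (based on whether $\underline{u} \gtreqless w_{0}+\epsilon$ at $x$ and at $y$), together with the monotonicity of $g$, shows the LHS integrand is nonnegative; moreover, setting $S := \{\underline{u} > w_{0}+\epsilon\}$, the elementary inequality~\eqref{3equ6} upgrades this on $S \times S$ to a lower bound $c\iint_{S \times S} G(|\phi(x)-\phi(y)|/|x-y|^{s})\,dx\,dy/|x-y|^{N}$. On the right-hand side, since $f \geq 0$ and $t \mapsto t^{-\alpha}$ is decreasing, the $f$-contribution is nonpositive on $S$ and may be discarded; the remaining $k$-term is handled by the mean value bound $\underline{u}^{\beta}-(w_{0}+\epsilon)^{\beta} \leq C\, \underline{u}^{\max\{\beta-1,0\}}\phi$ on $S$, H\"older's inequality (Lemma~\ref{lemma3}) with $k \in L^{\overline{K}}(\Omega)$, and the Sobolev embedding $W^{s,G}_{0}(\Omega) \hookrightarrow L^{G_{*}}(\Omega)$ from Remark~\ref{remark1}, yielding a bound in terms of $\|\phi\|_{L^{G_{*}}(\Omega)}^{\beta+1}$.

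Combining these estimates with Lemma~\ref{lemma2} produces an inequality schematically of the form
\begin{equation*}
c\,\min\{\|\phi\|_{W^{s,G}_{0}(\Omega)}^{p^{-}},\, \|\phi\|_{W^{s,G}_{0}(\Omega)}^{p^{+}}\} \leq C\,\|\phi\|_{W^{s,G}_{0}(\Omega)}^{\beta+1}.
\end{equation*}
I expect the crux of the argument---and the main obstacle---to lie in this final absorption step: the hypothesis $0 < \beta < p^{-}-1$, i.e.\ $\beta+1 < p^{-}$, is precisely tailored to it, but a naive comparison of exponents only bounds $\|\phi\|_{W^{s,G}_{0}(\Omega)}$ from above rather than forcing it to vanish. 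The natural refinement is to work instead with the truncation $T_{\delta}(\phi) := \min\{\phi,\delta\}$ in place of $\phi$ and exploit $T_{\delta}(\phi)^{\beta+1} \leq \delta^{\beta+1-p^{-}}T_{\delta}(\phi)^{p^{-}}$ on $\{\phi \leq \delta\}$, sending $\delta \to 0^{+}$ to conclude $\phi \equiv 0$. Alternatively, one can apply the D\'iaz--Saa type Lemma~\ref{Lem2} to the truncated pair $(\min\{\underline{u}, M(w_{0}+\epsilon)\},\, w_{0}+\epsilon)$, transferring the argument into a genuine $W^{s,G}_{0}(\Omega)$ framework before sending $M \to \infty$. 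In either approach, the strict inequality $\beta + 1 < p^{-}$ is indispensable.
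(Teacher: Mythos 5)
Your primary attempt (testing directly with $\phi = (\underline{u}-(w_0+\epsilon))^+$) runs into exactly the obstacle you diagnose, but the obstacle is not an absorption problem that a finer truncation can repair: it is a sign problem on the right-hand side. With your test function the $k$-contribution on $S := \{\underline{u} > w_0+\epsilon\}$ is
\begin{equation*}
\int_{\Omega} k(x)\bigl(\underline{u}^{\beta}-(w_0+\epsilon)^{\beta}\bigr)\phi\,dx \geq 0,
\end{equation*}
since $t\mapsto t^{\beta}$ is increasing and $\phi\geq 0$, so it cannot be discarded and a naive power-counting can only bound $\|\phi\|_{W^{s,G}_0(\Omega)}$ from above, never force it to vanish — as you note. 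Your first proposed refinement does not repair this: on $\{\phi\leq\delta\}$ one has $T_\delta(\phi)=\phi\leq\delta$, and since $\beta+1-p^-<0$ this gives $\phi^{\beta+1-p^-}\geq\delta^{\beta+1-p^-}$, i.e.\ $T_\delta(\phi)^{\beta+1}\geq\delta^{\beta+1-p^-}T_\delta(\phi)^{p^-}$; the inequality you wrote is reversed, and in any case the prefactor $\delta^{\beta+1-p^-}\to\infty$ as $\delta\to0^+$, so the estimate goes the wrong way.

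Your second suggestion (invoking the D\'iaz--Saa Lemma~\ref{Lem2}) points at the correct mechanism, but it is left too vague to constitute a proof, and it is worth stating precisely what that mechanism is. The paper tests the sub-solution inequality for $\underline{u}$ and the variational inequality~\eqref{3comequ10} for $w_0$ with the regularized D\'iaz--Saa functions
\begin{equation*}
\Psi_m=\frac{\mathbf{T}_\mathfrak{k}\bigl(\bigl[(\underline{u}+m)^{p^-}-(w_0+m+\epsilon)^{p^-}\bigr]^+\bigr)}{(\underline{u}+m)^{p^--1}},\qquad
\Phi_m=\frac{\mathbf{T}_\mathfrak{k}\bigl(\bigl[(w_0+m+\epsilon)^{p^-}-(\underline{u}+m)^{p^-}\bigr]^-\bigr)}{(w_0+m+\epsilon)^{p^--1}},
\end{equation*}
after checking carefully (via six pointwise cases over $\Omega_1,\Omega_2,\Omega_3$) that $\Psi_m,\Phi_m\in W^{s,G}_0(\Omega)\cap L^\infty(\Omega)$. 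The left-hand side is shown to be $\geq 0$ in the limit $\mathfrak{k}\to\infty$ by splitting $\mathbb{R}^{2N}$ according to $\Omega_1,\Omega_2,\Omega_3$ and combining Lemma~\ref{Lem2}, the monotonicity of $\tau\mapsto g(|\tau-\tau_0|)\frac{\tau-\tau_0}{|\tau-\tau_0|}\cdot\tau^{-(p^--1)}$, and the $G$-fractional Picone inequality (Proposition~\ref{proposition2}). The decisive point — and the idea your proposal does not capture — is what happens on the right: after $m\to0$ and $\mathfrak{k}\to\infty$, the $k$-term becomes
\begin{equation*}
\int_{\Omega} k(x)\bigl(\underline{u}^{\beta-p^-+1}-(w_0+\epsilon)^{\beta-p^-+1}\bigr)\bigl(\underline{u}^{p^-}-(w_0+\epsilon)^{p^-}\bigr)^+\,dx \leq 0,
\end{equation*}
because $\beta<p^- -1$ makes $t\mapsto t^{\beta+1-p^-}$ decreasing, so the first factor is \emph{nonpositive} on $S$ while the second is nonnegative; similarly, $t\mapsto t^{-\alpha}/(t+m)^{p^--1}$ is decreasing, so the $f$-term is nonpositive and can be discarded. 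Squeezing $0\leq\int\mathbf{E}_1\leq\int\mathbf{E}_2\leq 0$ then forces the integrand to vanish, hence $\underline{u}\leq w_0+\epsilon$. In short, the D\'iaz--Saa substitution is not an optional ``transfer into a genuine $W^{s,G}_0$ framework''; it is the device that flips the sign of the $k$-term and makes $\beta<p^- -1$ bite, and without spelling this out the proof has a genuine gap.
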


\noindent
Let \( \epsilon > 0 \), \( m > 0 \), and fix \( \mathfrak{k} > 0 \). We define the truncation function \( \mathbf{T}_{\mathfrak{k}} \) by
$$ \mathbf{T}_\mathfrak{k}(s) := \min\{s, \mathfrak{k}\} \text{ if } s \geq 0, \quad \mathbf{T}_\mathfrak{k}(s) := -\min\{-s, \mathfrak{k}\} \text{ if } s < 0. $$
We begin by considering the auxiliary functions
{\small \[
\Psi_{m} := \frac{\mathbf{T}_{\mathfrak{k}}\left( \left[(\underline{u} + m)^{p^{-}} - (w_0 + m + \epsilon)^{p^{-}}\right]^+ \right)}{(\underline{u} + m)^{p^{-} - 1}}, \qquad
\Phi_{m} := \frac{\mathbf{T}_{\mathfrak{k}}\left( \left[(w_0 + m + \epsilon)^{p^{-}} - (\underline{u} + m)^{p^{-}}\right]^- \right)}{(w_0 + m + \epsilon)^{p^{-} - 1}},
\]}

\noindent
which are well defined and belong to \( W_0^{s,G}(\Omega) \cap L^{\infty}(\Omega) \). For brevity, we provide a detailed proof only for \( \Psi_{m} \), since the analysis for \( \Phi_{m} \) is analogous.  Assuming that \( w_0 \geq 0 \) almost everywhere in \( \Omega \), it follows that
\[
\operatorname{supp}(\Psi_{m}) \subset \mathcal{S}_{\epsilon}, \quad \text{where } \mathcal{S}_{\epsilon} := \operatorname{supp}\left((\underline{u} - \epsilon)^+\right).
\]
We introduce the following measurable subsets of \( \Omega \):
\[
\begin{aligned}
\Omega_1 &:= \left\{ x \in \Omega : (\underline{u} + m)^{p^{-}}(x) - (w_0 + m + \epsilon)^{p^{-}}(x) \leq 0 \right\}, \\[4pt]
\Omega_2 &:= \left\{ x \in \Omega : 0 < (\underline{u} + m)^{p^{-}}(x) - (w_0 + m + \epsilon)^{p^{-}}(x) < \mathfrak{k} \right\}, \\[4pt]
\Omega_3 &:= \left\{ x \in \Omega : \mathfrak{k} \leq  (\underline{u} + m)^{p^{-}}(x) - (w_0 + m + \epsilon)^{p^{-}}(x)  \right\}.
\end{aligned}
\]
Our aim is to derive the following estimate, which holds for every pair \( x, y \in \mathbb{R}^N \):
\begin{equation}\label{equ4}
G\left( \dfrac{\left| \Psi_{m}(x) - \Psi_{m}(y) \right| }{ \left| x - y \right|^{s} } \right) 
\leq C \left[ 
G\left( \dfrac{ \left| \underline{u}(x) - \underline{u}(y) \right| }{ \left| x - y \right|^{s} } \right) 
+ 
G\left( \dfrac{ \left| w_0(x) - w_0(y) \right| }{ \left| x - y \right|^{s} } \right) 
\right],
\end{equation}
for some constant \( C > 0 \). By symmetry, it suffices to consider the following cases: \\[6pt]
\textbf{(1)} If \(x, y \in \Omega_1\), then \(\Psi_m(x) = \Psi_m(y) = 0\).\\[6pt]
\textbf{(2)} Let \(y \in \Omega_1\) and \(x \in \Omega_2\). By the Lagrange Mean Value Theorem, we have
\begin{equation*}
\begin{aligned}
\left| \Psi_m(x) - \Psi_m(y) \right| 
&= \frac{(\underline{u} + m)^{p^{-}}(x) - (w_0 + m + \epsilon)^{p^{-}}(x)}{(\underline{u} + m)^{p^{-}-1}(x)} \\[4pt]
&\leq p^{-} \cdot \frac{\max\left\{ (\underline{u} + m)^{p^{-}-1}(x),\, (w_0 + m + \epsilon)^{p^{-}-1}(x) \right\}}{(\underline{u} + m)^{p^{-}-1}(x)} \cdot \left(  \underline{u}(x) - \epsilon - w_0(x) \right) \\[4pt]
&\leq  p^{-} \left[ \left| \underline{u}(x) - \underline{u}(y) \right| + \left| w_0(x) - w_0(y) \right| \right],
\end{aligned}
\end{equation*}
where we used the fact that \( \underline{u}(y) - \epsilon - w_0(y) \leq 0 \) by construction.\\[6pt]
\textbf{(3)} Let \(y \in \Omega_1\) and \(x \in \Omega_3\). Then
\begin{equation*}
\begin{aligned}
\left| \Psi_m(x) - \Psi_m(y) \right| 
&= \frac{\mathfrak{k}}{(\underline{u} + m)^{p^{-}-1}(x)} \\[4pt]
&\leq \frac{(\underline{u} + m)^{p^{-}}(x) - (w_0 + m + \epsilon)^{p^{-}}(x)}{(\underline{u} + m)^{p^{-}-1}(x)} \\[4pt]
&\leq p^{-} \left[ \left| \underline{u}(x) - \underline{u}(y) \right| + \left| w_0(x) - w_0(y) \right| \right].
\end{aligned}
\end{equation*}
\textbf{(4)} If \(x, y \in \Omega_2\) with \(\underline{u}(x) > \underline{u}(y)\), then
\begin{equation*}
\begin{aligned}
&\left| \Psi_m(x) - \Psi_m(y) \right| 
= \left| \underline{u}(x) - \frac{(w_0 + m + \epsilon)^{p^{-}}(x)}{(\underline{u} + m)^{p^{-} - 1}(x)} - \underline{u}(y) + \frac{(w_0 + m + \epsilon)^{p^{-}}(y)}{(\underline{u} + m)^{p^{-} - 1}(y)} \right| \\[4pt]
&\leq \left| \underline{u}(x) - \underline{u}(y) \right| 
+ \left| \frac{(w_0 + m + \epsilon)^{p^{-}}(x)}{(\underline{u} + m)^{p^{-} - 1}(x)} - \frac{(w_0 + m + \epsilon)^{p^{-}}(y)}{(\underline{u} + m)^{p^{-} - 1}(y)} \right| \\[4pt]
&\leq \left| \underline{u}(x) - \underline{u}(y) \right| 
+ \frac{\left| (w_0 + m + \epsilon)^{p^{-}}(x) - (w_0 + m + \epsilon)^{p^{-}}(y) \right|}{(\underline{u} + m)^{p^{-} - 1}(x)} \\[4pt]
&\quad + (w_0 + m + \epsilon)^{p^{-}}(y) 
\left| \frac{1}{(\underline{u} + m)^{p^{-} - 1}(x)} - \frac{1}{(\underline{u} + m)^{p^{-} - 1}(y)} \right| \\[4pt]
&\leq \left| \underline{u}(x) - \underline{u}(y) \right| 
+ p^{-} \cdot \frac{\max\left\{ (w_0 + m + \epsilon)^{p^{-}-1}(x),\, (w_0 + m + \epsilon)^{p^{-}-1}(y) \right\}}{(\underline{u} + m)^{p^{-} - 1}(x)} \left| w_0(x) - w_0(y) \right| \\[4pt]
&\quad + (p^{-} - 1)(w_0 + m + \epsilon)(y) 
\cdot \frac{\max\left\{ (\underline{u} + m)^{p^{-}-2}(x),\, (\underline{u} + m)^{p^{-}-2}(y) \right\}}{(\underline{u} + m)^{p^{-}-1}(x)} \left| \underline{u}(x) - \underline{u}(y) \right| \\[4pt]
&\leq p^{-} \left( \left| \underline{u}(x) - \underline{u}(y) \right| + \left| w_0(x) - w_0(y) \right| \right).
\end{aligned}
\end{equation*}
\textbf{(5)} Let \( y \in \Omega_2 \) and \( x \in \Omega_3 \). Then
\begin{equation*}
\begin{aligned}
\left| \Psi_m(x) - \Psi_m(y) \right| 
&= \left| \frac{\mathfrak{k}}{(\underline{u} + m)^{p^{-}-1}(x)} - \frac{(\underline{u} + m)^{p^{-}}(y) - (w_0 + m + \epsilon)^{p^{-}}(y)}{(\underline{u} + m)^{p^{-}-1}(y)} \right| \\[4pt]
&\leq \left| \frac{\mathfrak{k}}{(\underline{u} + m)^{p^{-}-1}(x)} - \frac{\mathfrak{k}}{(\underline{u} + m)^{p^{-}-1}(y)} \right| \\[4pt]
&\quad + \left| \frac{(\underline{u} + m)^{p^{-}}(x) - (w_0 + m + \epsilon)^{p^{-}}(x)}{(\underline{u} + m)^{p^{-}-1}(y)} - \frac{(\underline{u} + m)^{p^{-}}(y) - (w_0 + m + \epsilon)^{p^{-}}(y)}{(\underline{u} + m)^{p^{-}-1}(y)} \right| \\[4pt]
&\leq (p^{-} - 1) \, (\underline{u} + m)(x) \, \frac{\max\left\{ (\underline{u} + m)^{p^{-} - 2}(x), (\underline{u} + m)^{p^{-} - 2}(y) \right\}}{(\underline{u} + m)^{p^{-} - 1}(y)} \, \left| \underline{u}(x) - \underline{u}(y) \right| \\[4pt]
&\quad + p^{-} \cdot \frac{\max\left\{ (\underline{u} + m)^{p^{-} - 1}(x), (\underline{u} + m)^{p^{-} - 1}(y) \right\}}{(\underline{u} + m)^{p^{-} - 1}(y)} \, \left| \underline{u}(x) - \underline{u}(y) \right| \\[4pt]
&\quad + p^{-} \cdot \frac{\max\left\{ (w_0 + m + \epsilon)^{p^{-} - 1}(x), (w_0 + m + \epsilon)^{p^{-} - 1}(y) \right\}}{(\underline{u} + m)^{p^{-} - 1}(y)} \, \left| w_0(x) - w_0(y) \right| \\[4pt]
&\leq C \left( \left| \underline{u}(x) - \underline{u}(y) \right| + \left| w_0(x) - w_0(y) \right| \right),
\end{aligned}
\end{equation*}
where \( C > 0 \) is a constant depending only on \( p^{-} \).\\[6pt]
\textbf{(6)} Let \(x, y \in \Omega_3\). Then
\begin{equation*}
\begin{aligned}
\left| \Psi_m(x) - \Psi_m(y) \right| 
&= \left|\frac{\mathfrak{k}}{(\underline{u} + m)^{p^{-} - 1}(x)} - \frac{\mathfrak{k}}{(\underline{u} + m)^{p^{-} - 1}(y)}\right|= \mathfrak{k} \frac{\left|(\underline{u} + m)^{p^{-} - 1}(x) - (\underline{u} + m)^{p^{-} - 1}(y)\right|}{(\underline{u} + m)^{p^{-} - 1}(x)\, (\underline{u} + m)^{p^{-} - 1}(y)} \\[4pt]
&\leq \mathfrak{k}(p^{-} - 1) \frac{\max\left\{ (\underline{u} + m)^{p^{-} - 2}(x),\, (\underline{u} + m)^{p^{-} - 2}(y) \right\}}{(\underline{u} + m)^{p^{-} - 1}(x)\, (\underline{u} + m)^{p^{-} - 1}(y)} \left| \underline{u}(x) - \underline{u}(y) \right| \\[4pt]
&\leq C \left( \left| \underline{u}(x) - \underline{u}(y) \right| + \left| w_0(x) - w_0(y) \right| \right),
\end{aligned}
\end{equation*}
where \(C > 0\) is a constant depending only on \(p^{-}\). By the convexity of \(G\), inequality~\eqref{equ4} holds. Thus, integrating both sides and applying Lemma~\ref{lemma2}, and since \(\underline{u}\) is a weak sub-solution of problem~\eqref{GP}, we obtain
\begin{align*}
&\min\left\{ \left\| \Psi_m \right\|_{W^{s,G}_{0}(\Omega)}^{p^{-}},\, \left\| \Psi_m \right\|_{W^{s,G}_{0}(\Omega)}^{p^{+}} \right\}
\leq \iint_{(\mathbb{R}^{N} \times \mathbb{R}^{N}) \setminus (\mathcal{S}_{\epsilon}^{c} \times \mathcal{S}_{\epsilon}^{c})} G\left( \frac{\left| \Psi_m(x) - \Psi_m(y) \right|}{\left| x - y \right|^{s}} \right) \frac{dx\,dy}{\left| x - y \right|^{N}} \\[4pt]
&\leq C \left[ \iint_{(\mathbb{R}^{N} \times \mathbb{R}^{N}) \setminus (\mathcal{S}_{\epsilon}^{c} \times \mathcal{S}_{\epsilon}^{c})} G\left( \frac{\left| \underline{u}(x) - \underline{u}(y) \right|}{\left| x - y \right|^{s}} \right) \frac{dx\,dy}{\left| x - y \right|^{N}}  + \iint_{\mathbb{R}^{N} \times \mathbb{R}^{N}} G\left( \frac{\left| w_0(x) - w_0(y) \right|}{\left| x - y \right|^{s}} \right) \frac{dx\,dy}{\left| x - y \right|^{N}}\right]  \\[4pt]
&\leq C(\epsilon)\left[  \iint_{\mathbb{R}^{N} \times \mathbb{R}^{N}} G\left( \frac{\left| \Psi(\underline{u}(x)) - \Psi(\underline{u}(y)) \right|}{\left| x - y \right|^{s}} \right) \frac{dx\,dy}{\left| x - y \right|^{N}} + \iint_{\mathbb{R}^{N} \times \mathbb{R}^{N}} G\left( \frac{\left| w_0(x) - w_0(y) \right|}{\left| x - y \right|^{s}} \right) \frac{dx\,dy}{\left| x - y \right|^{N}} \right] \\[4pt]
&\leq C(\epsilon)\left[ \max\left\{ \left\| \Psi(\underline{u}) \right\|_{W^{s,G}_{0}(\Omega)}^{p^{-}},\, \left\| \Psi(\underline{u}) \right\|_{W^{s,G}_{0}(\Omega)}^{p^{+}} \right\} + \max\left\{ \left\| w_0 \right\|_{W^{s,G}_{0}(\Omega)}^{p^{-}},\, \left\| w_0 \right\|_{W^{s,G}_{0}(\Omega)}^{p^{+}} \right\} \right] < \infty.
\end{align*}
On the one hand, consider a sequence \( (\varphi_{n}) \subset C^{\infty}_{c}(\Omega) \) such that \(\varphi_{n} \to \Psi_{m}\) strongly in \(W^{s,G}_{0}(\Omega)\), and define
\[
\tilde{\varphi}_{n} := \min \left\{ \Psi_{m}, \varphi_{n}^{+} \right\}.
\]
Clearly, \(\tilde{\varphi}_{n} \in W^{s,G}_{0}(\Omega) \cap L^{\infty}_{c}(\Omega)\). Consequently, we have
\[
\int_{\mathbb{R}^N} \int_{\mathbb{R}^N} 
g\left( \frac{|\underline{u}(x) - \underline{u}(y)|}{|x - y|^{s}} \right)
\frac{\underline{u}(x) - \underline{u}(y)}{|\underline{u}(x) - \underline{u}(y)|}
\cdot \frac{\tilde{\varphi}_n(x) - \tilde{\varphi}_n(y)}{|x - y|^{N+s}} \, dx \, dy
\leq \int_{\Omega} \left( f(x) \, \underline{u}^{-\alpha} + k(x) \, \underline{u}^{\beta} \right) \tilde{\varphi}_n(x) \, dx.
\]
From Definition~\ref{definition1}, and using the strong convergence \( \tilde{\varphi}_{n} \to \Psi_{m} \) in \( W^{s, G}_{0}(\Omega) \), we can pass to the limit as \( n \to \infty \), thereby obtaining
\begin{equation}\label{equ8}
\begin{aligned}
\int_{\mathbb{R}^N} \int_{\mathbb{R}^N} &
g\left( \frac{|\underline{u}(x) - \underline{u}(y)|}{|x - y|^{s}} \right)
\frac{\underline{u}(x) - \underline{u}(y)}{|\underline{u}(x) - \underline{u}(y)|}
\cdot \frac{\Psi_m(x) - \Psi_m(y)}{|x - y|^{N+s}} \, dx \, dy \\
& \leq \int_{\Omega} \left( f(x) \, \underline{u}^{-\alpha} + k(x) \, \underline{u}^{\beta} \right) \Psi_m(x) \, dx.
\end{aligned}
\end{equation}
On the other hand, testing \eqref{3comequ10} with \(\Phi_{m}\) leads to
\begin{equation}\label{equ7}
\begin{aligned}
\int_{\mathbb{R}^N} \int_{\mathbb{R}^N} &
g\left( \frac{|w_0(x) - w_0(y)|}{|x - y|^{s}} \right)
\frac{w_0(x) - w_0(y)}{|w_0(x) - w_0(y)|}
\cdot \frac{\Phi_m(x) - \Phi_m(y)}{|x - y|^{N+s}} \, dx \, dy \\
& \geq \int_{\Omega} \left[ f(x) (w_0 + \epsilon)^{-\alpha} + k(x) (w_0 + \epsilon)^{\beta} \right] \Phi_m(x) \, dx.
\end{aligned}
\end{equation}
Subtracting \eqref{equ8} from \eqref{equ7} yields
\begin{equation}\label{comequ1}
\begin{aligned}
\int_{\mathbb{R}^N} \int_{\mathbb{R}^N} \mathbf{E}_1(x, y) \, dx \, dy \leq \int_{\Omega} \mathbf{E}_2(x, y) \, dx,
\end{aligned}
\end{equation}
where
\begin{equation*}
\begin{aligned}
\mathbf{E}_1(x, y) &:= 
g\left( \frac{|\underline{u}(x) - \underline{u}(y)|}{|x - y|^{s}} \right)
\frac{\underline{u}(x) - \underline{u}(y)}{|\underline{u}(x) - \underline{u}(y)|}
\cdot \frac{\Psi_m(x) - \Psi_m(y)}{|x - y|^{N+s}} \\[4pt]
& \quad - 
g\left( \frac{|w_0(x) - w_0(y)|}{|x - y|^{s}} \right)
\frac{w_0(x) - w_0(y)}{|w_0(x) - w_0(y)|}
\cdot \frac{\Phi_m(x) - \Phi_m(y)}{|x - y|^{N+s}},
\\[8pt]
\mathbf{E}_2(x, y) &:=\left[ f(x) \left( \frac{\underline{u}^{-\alpha}}{(\underline{u} + m)^{p^{-}-1}} - \frac{(w_0 + \epsilon)^{-\alpha}}{(w_0 + m + \epsilon)^{p^{-}-1}} \right) \right. \\[4pt]
& \quad \left. + k(x) \left( \frac{\underline{u}^{\beta}}{(\underline{u} + m)^{p^{-}-1}} - \frac{(w_0 + \epsilon)^{\beta}}{(w_0 + m + \epsilon)^{p^{-}-1}} \right) \right] 
\mathbf{T}_\mathfrak{k} \left( \bigl( (\underline{u} + m)^{p^{-}} - (w_0 + m + \epsilon)^{p^{-}} \bigr)^+ \right).
\end{aligned}
\end{equation*}

\medskip

\noindent We begin with the left-hand side of \eqref{comequ1}, which can be decomposed as follows:
\begin{equation*}
\begin{aligned}
\int_{\mathbb{R}^N} \int_{\mathbb{R}^N} \mathbf{E}_1 (x, y) \, dx \, dy &= \iint_{\Omega_{1}\times\Omega_{1}} \mathbf{E}_1 (x, y)  \, dx \, dy + \iint_{\Omega_{2}\times\Omega_{2}} \mathbf{E}_1 (x, y)  \, dx \, dy + \iint_{\Omega_{3}\times\Omega_{3}} \mathbf{E}_1 (x, y)  \, dx \, dy \\
& + 2\iint_{\Omega_{2}\times \Omega_{1}} \mathbf{E}_1 (x, y)  \, dx \, dy + 2\iint_{\Omega_{3}\times \Omega_{1}} \mathbf{E}_1 (x, y)  \, dx \, dy + 2\iint_{\Omega_{3}\times \Omega_{2}} \mathbf{E}_1 (x, y)  \, dx \, dy.
\end{aligned}
\end{equation*}

\noindent We now estimate each term individually, beginning with the following observation:
\begin{equation*}
\Psi_{m} (x) - \Psi_{m} (y) = 0 \quad \text{and} \quad \Phi_{m} (x) - \Phi_{m} (y) = 0 \quad \text{for all } (x, y) \in \Omega_{1} \times \Omega_{1},
\end{equation*}
which implies that the integral over $\Omega_{1} \times \Omega_{1}$ vanishes, that is,
\begin{equation}\label{equ11}
\iint_{\Omega_{1}\times \Omega_{1}}  \mathbf{E}_1 (x, y) \, dx \, dy = 0.
\end{equation}

\noindent By applying Lemma~\ref{Lem2}, we derive the following inequality:
{\small
	\begin{equation}\label{equ12}
	\begin{aligned}
	& \iint_{\Omega_{2} \times \Omega_{2}} 
	g\left( \frac{\left| \underline{u}(x) - \underline{u}(y) \right|}{|x - y|^{s}} \right)  
	\frac{ \underline{u}(x) - \underline{u}(y) }{ \left| \underline{u}(x) - \underline{u}(y) \right| } \\
	& \quad \times \left( 
	\frac{ (\underline{u} + m)^{p^{-}}(x) - (w_{0} + m + \epsilon)^{p^{-}}(x) }{ (\underline{u} + m)^{p^{-} - 1}(x) }
	- \frac{ (\underline{u} + m)^{p^{-}}(y) - (w_{0} + m + \epsilon)^{p^{-}}(y) }{ (\underline{u} + m)^{p^{-} - 1}(y) }
	\right)
	\frac{dx\,dy}{|x - y|^{N + s}} \\[4pt]
	& \quad + \iint_{\Omega_{2} \times \Omega_{2}} 
	g\left( \frac{ \left| w_{0}(x) - w_{0}(y) \right| }{ |x - y|^{s} } \right)  
	\frac{ w_{0}(x) - w_{0}(y) }{ \left| w_{0}(x) - w_{0}(y) \right| } \\
	& \quad \times \left( 
	\frac{ (w_{0} + m + \epsilon)^{p^{-}}(x) - (\underline{u} + m)^{p^{-}}(x) }{ (w_{0} + m + \epsilon)^{p^{-} - 1}(x) }
	- \frac{ (w_{0} + m + \epsilon)^{p^{-}}(y) - (\underline{u} + m)^{p^{-}}(y) }{ (w_{0} + m + \epsilon)^{p^{-} - 1}(y) }
	\right)
	\frac{dx\,dy}{|x - y|^{N + s}} \geq 0.
	\end{aligned}
	\end{equation}
}
	
\noindent 
Moreover, by invoking Lagrange's theorem and using \eqref{equ2}, we derive the following estimate:
{\small
	\begin{equation*}
	\begin{aligned}
	&\iint_{\Omega_{3} \times \Omega_{3}} E_{1}(x, y) \, dx \, dy \\[4pt]
	&  \geq -\mathfrak{k}(p^{-}-1) \iint_{\Omega_{3} \times \Omega_{3}} g\left( \dfrac{\left| \underline{u}(x) - \underline{u}(y) \right| }{\left| x - y \right|^{s}} \right)  
	\dfrac{\left| \underline{u}(x) - \underline{u}(y) \right| }{\left| x - y \right|^{s}}  \left( \dfrac{ \max\left\lbrace (\underline{u}(y) + m)^{p^{-}-2},\, (\underline{u}(x) + m)^{p^{-}-2} \right\rbrace }
	{(\underline{u}(y) + m)^{p^{-}-1} (\underline{u}(x) + m)^{p^{-}-1}} \right) \dfrac{dx \, dy}{\left| x - y \right|^{N}} \\[4pt]
	& \geq -\mathfrak{k} p^{+} (p^{-}-1) \iint_{\Omega_{3} \times \Omega_{3}} G\left( \dfrac{\left| \underline{u}(x) - \underline{u}(y) \right| }{\left| x - y \right|^{s}} \right)  
	\left( \dfrac{ \max\left\lbrace (\underline{u}(y) + m)^{p^{-}-2},\, (\underline{u}(x) + m)^{p^{-}-2} \right\rbrace }
	{(\underline{u}(y) + m)^{p^{-}-1} (\underline{u}(x) + m)^{p^{-}-1}} \right) \dfrac{dx \, dy}{\left| x - y \right|^{N}}.
	\end{aligned}
	\end{equation*}
}

\noindent
Now, define
\[
\omega_{\underline{u}} := \left\lbrace (x, y) \in \Omega_{3} \times \Omega_{3} \,\middle|\, \underline{u}(x) > \underline{u}(y) \right\rbrace.
\]
Taking into account that $\operatorname{meas}(\Omega_{3}) \to 0$ as $\mathfrak{k} \to \infty$, we obtain
\begin{equation} \label{equ13}
\begin{aligned}
\iint_{\Omega_{3} \times \Omega_{3}} E_{1}(x, y) \, dx \, dy & \geq - p^{+}(p^{-} - 1) \iint_{(\Omega_{3} \times \Omega_{3}) \cap \omega_{\underline{u}}} G\left( \dfrac{\left| \underline{u}(x) - \underline{u}(y) \right| }{\left| x - y \right|^{s}} \right) 
\left( \dfrac{\underline{u}(y) + m}{\underline{u}(x) + m} \right) \dfrac{dx \, dy}{\left| x - y \right|^{N}} \\[4pt]
& \quad - p^{+}(p^{-} - 1) \iint_{(\Omega_{3} \times \Omega_{3}) \cap \omega_{\underline{u}}^{c}} G\left( \dfrac{\left| \underline{u}(x) - \underline{u}(y) \right| }{\left| x - y \right|^{s}} \right) 
\left( \dfrac{\underline{u}(x) + m}{\underline{u}(y) + m} \right) \dfrac{dx \, dy}{\left| x - y \right|^{N}} \\[4pt]
& \geq -p^{+} (p^{-} - 1) \iint_{\Omega_{3} \times \Omega_{3}} G\left( \dfrac{\left| \underline{u}(x) - \underline{u}(y) \right| }{\left| x - y \right|^{s}} \right) \dfrac{dx \, dy}{\left| x - y \right|^{N}}
\to 0 \quad \text{as } \mathfrak{k} \to \infty,
\end{aligned}
\end{equation}
where we have used \cite[Lemma 3.11]{bal2024singular} and Definition~\ref{definition1} to ensure the finiteness of the last integral. Moreover, it is easy to observe, by assumption \textbf{(H4)}, that the following function is nondecreasing:
\[
\tau \mapsto g\left( \left| \tau - \tau_{0} \right| \right) \frac{\tau - \tau_{0}}{\left| \tau - \tau_{0} \right|} \cdot \frac{1}{ \tau^{p^{-} - 1}},
\]
which leads to the following inequality:

{\small
	\begin{equation}\label{equ10}
	\begin{aligned}
	\iint_{\Omega_{2} \times \Omega_{1}} E_{1}(x, y)\, dx \, dy 
	&= \iint_{\Omega_{2} \times \Omega_{1}} \frac{(\underline{u}(x) + m)^{p^{-}} - (w_{0}(x) + m + \epsilon)^{p^{-}}}{|x - y|^{N + s }} \\
	&\times \Bigg[ 
	g\left( \frac{\left| \underline{u}(x) - \underline{u}(y) \right|}{|x - y|^{s}} \right)  
	\frac{ \underline{u}(x) - \underline{u}(y) }{ \left| \underline{u}(x) - \underline{u}(y) \right| } \cdot \frac{1}{(\underline{u}(x) + m)^{p^{-}-1}} \\
	& - 
	g\left( \frac{\left| w_{0}(x) - w_{0}(y) \right|}{|x - y|^{s}} \right)  
	\frac{ w_{0}(x) - w_{0}(y) }{ \left| w_{0}(x) - w_{0}(y) \right| } \cdot \frac{1}{(w_{0}(x) + m + \epsilon)^{p^{-}-1}} 
	\Bigg] dx \, dy \geq 0.
	\end{aligned}
	\end{equation}
}

\noindent Similarly, we obtain
{\small \begin{equation}\label{equ14}
\begin{aligned}
&\iint_{\Omega_{3} \times \Omega_{1}} E_{1}(x, y)\, dx \, dy 
= \iint_{\Omega_{3} \times \Omega_{1}} \frac{k}{|x - y|^{N + s }}\Bigg[ 
g\left( \frac{\left| \underline{u}(x) - \underline{u}(y) \right|}{|x - y|^{s}} \right)  
\frac{ \underline{u}(x) - \underline{u}(y) }{ \left| \underline{u}(x) - \underline{u}(y) \right| } \cdot \frac{1}{(\underline{u}(x) + m)^{p^{-}-1}} \\
&\qquad \quad - 
g\left( \frac{\left| w_{0}(x) - w_{0}(y) \right|}{|x - y|^{s}} \right)  
\frac{ w_{0}(x) - w_{0}(y) }{ \left| w_{0}(x) - w_{0}(y) \right| } \cdot \frac{1}{(w_{0}(x) + m + \epsilon)^{p^{-}-1}} 
\Bigg] dx \, dy \geq 0.
\end{aligned}
\end{equation}}

\noindent
To estimate the last integral, we observe that
\begin{equation*}
\iint_{\Omega_{3} \times \Omega_{2}} E_{1}(x, y) \, dx \, dy 
= \mathbf{I}_{1} + \mathbf{I}_{2},
\end{equation*}
where
\begin{equation*}
\begin{aligned}
\mathbf{I}_{1} := \iint_{\Omega_{3} \times \Omega_{2}} & 
g\left( \frac{ \left| \underline{u}(x) - \underline{u}(y) \right| }{ |x - y|^{s} } \right)  
\frac{ \underline{u}(x) - \underline{u}(y) }{ \left| \underline{u}(x) - \underline{u}(y) \right| } \\
& \times \left( \frac{k}{(\underline{u}(x) + m)^{p^{-}-1}} 
- \frac{ (\underline{u}(y) + m)^{p^{-}} - (w_{0}(y) + m + \epsilon)^{p^{-}} }{ (\underline{u}(y) + m)^{p^{-}-1} } \right) 
\frac{dx \, dy}{|x - y|^{N + s}},
\end{aligned}
\end{equation*}
\begin{equation*}
\begin{aligned}
\mathbf{I}_{2} := - \iint_{\Omega_{3} \times \Omega_{2}} & 
g\left( \frac{ \left| w_{0}(x) - w_{0}(y) \right| }{ |x - y|^{s} } \right)  
\frac{ w_{0}(x) - w_{0}(y) }{ \left| w_{0}(x) - w_{0}(y) \right| } \\
& \times \left( \frac{k}{(w_{0}(x) + m + \epsilon)^{p^{-}-1}} 
- \frac{ (\underline{u}(y) + m)^{p^{-}} - (w_{0}(y) + m + \epsilon)^{p^{-}} }{ (w_{0}(y) + m + \epsilon)^{p^{-}-1} } \right) 
\frac{dx \, dy}{|x - y|^{N + s}}.
\end{aligned}
\end{equation*}

\noindent 
We begin the analysis of $\mathbf{I}_{1}$ by introducing the set
\[
\mathbf{S}_{\underline{u}} := \left\lbrace (x, y) \in \Omega_{3} \times \Omega_{2} \, : \, \underline{u}(x) > \underline{u}(y) \right\rbrace.
\]

\noindent
By invoking identity~\eqref{equ2} and applying the Lagrange mean value theorem, we derive the following lower bound:
\begin{equation*}
\begin{aligned}
\mathbf{I}_1 \geq\; & \iint_{\mathbf{S}_{\underline{u}}} 
g\left( \frac{ \left| \underline{u}(x) - \underline{u}(y) \right| }{ |x - y|^{s} } \right)  
\frac{ \underline{u}(x) - \underline{u}(y) }{ \left| \underline{u}(x) - \underline{u}(y) \right| }  \\[6pt]
& \qquad \times \left[ \frac{ (\underline{u}(y) + m)^{p^{-}} - (w_{0}(y) + m + \epsilon)^{p^{-}} }{ (\underline{u}(x) + m)^{p^{-} - 1} } 
- \frac{ (\underline{u}(y) + m)^{p^{-}} - (w_{0}(y) + m + \epsilon)^{p^{-}} }{ (\underline{u}(y) + m)^{p^{-} - 1} } \right]
\frac{dx \, dy}{|x - y|^{N + s}}  \\[6pt]
& + \iint_{\mathbf{S}_{\underline{u}}^{\mathrm{c}}} 
g\left( \frac{ \left| \underline{u}(x) - \underline{u}(y) \right| }{ |x - y|^{s} } \right)  
\frac{ \underline{u}(x) - \underline{u}(y) }{ \left| \underline{u}(x) - \underline{u}(y) \right| } \\[6pt]
& \qquad \times \left[ \frac{ (\underline{u}(x) + m)^{p^{-}} - (w_{0}(x) + m + \epsilon)^{p^{-}} }{ (\underline{u}(x) + m)^{p^{-} - 1} } 
- \frac{ (\underline{u}(y) + m)^{p^{-}} - (w_{0}(y) + m + \epsilon)^{p^{-}} }{ (\underline{u}(y) + m)^{p^{-} - 1} } \right]
\frac{dx \, dy}{|x - y|^{N + s}} \\[6pt]
\geq\; & - p^{+}(p^{-} - 1) \iint_{\mathbf{S}_{\underline{u}}} 
G\left( \frac{ \left| \underline{u}(x) - \underline{u}(y) \right| }{ \left| x - y \right|^{s} } \right) 
\frac{dx \, dy}{\left| x - y \right|^{N}} \\[6pt]
& \underbrace{
	- \iint_{\mathbf{S}_{\underline{u}}^{\mathrm{c}}} 
	g\left( \frac{ \left| \underline{u}(x) - \underline{u}(y) \right| }{ |x - y|^{s} } \right)  
	\frac{ \underline{u}(x) - \underline{u}(y) }{ \left| \underline{u}(x) - \underline{u}(y) \right| } 
	\left[ \frac{ (w_{0}(x) + m + \epsilon)^{p^{-}} }{ (\underline{u}(x) + m)^{p^{-} - 1} } 
	- \frac{ (w_{0}(y) + m + \epsilon)^{p^{-}} }{ (\underline{u}(y) + m)^{p^{-} - 1} } \right] 
	\frac{dx \, dy}{\left| x - y \right|^{N + s}}
}_{\textbf{(*)}}.
\end{aligned}
\end{equation*}

\noindent
To estimate term \textbf{(*)}, we introduce the following subsets of \( \mathbf{S}_{\underline{u}}^{\mathrm{c}} \):
\[
\mathbf{S}^{1}_{\underline{u}}{}^{\mathrm{c}} := \left\lbrace (x, y) \in \mathbf{S}_{\underline{u}}^{\mathrm{c}} \;\middle|\;
\frac{ \left| \underline{u}(x) - \underline{u}(y) \right|}{ |x - y|^{s} } < 1 \;\text{and}\;
\frac{ \left| w_{0}(x) - w_{0}(y) \right|}{ |x - y|^{s} } < 1 
\right\rbrace,
\]
\[
\mathbf{S}^{2}_{\underline{u}}{}^{\mathrm{c}} := \left\lbrace (x, y) \in \mathbf{S}_{\underline{u}}^{\mathrm{c}} \;\middle|\;
\frac{ \left| \underline{u}(x) - \underline{u}(y) \right|}{ |x - y|^{s} } < 1 \;\text{and}\;
\frac{ \left| w_{0}(x) - w_{0}(y) \right|}{ |x - y|^{s} } > 1 
\right\rbrace,
\]
\[
\mathbf{S}^{3}_{\underline{u}}{}^{\mathrm{c}} := \left\lbrace (x, y) \in \mathbf{S}_{\underline{u}}^{\mathrm{c}} \;\middle|\;
\frac{ \left| \underline{u}(x) - \underline{u}(y) \right|}{ |x - y|^{s} } > 1 \;\text{and}\;
\frac{ \left| w_{0}(x) - w_{0}(y) \right|}{ |x - y|^{s} } < 1 
\right\rbrace,
\]
\[
\mathbf{S}^{4}_{\underline{u}}{}^{\mathrm{c}} := \left\lbrace (x, y) \in \mathbf{S}_{\underline{u}}^{\mathrm{c}} \;\middle|\;
\frac{ \left| \underline{u}(x) - \underline{u}(y) \right|}{ |x - y|^{s} } > 1 \;\text{and}\;
\frac{ \left| w_{0}(x) - w_{0}(y) \right|}{ |x - y|^{s} } > 1 
\right\rbrace.
\]

\noindent
By combining relation~\eqref{equ2} with the \( G \)-fractional Picone inequality (see Proposition~\ref{proposition2}), and subsequently applying Young's inequality, we derive the following lower bound:
\begin{equation*}
\begin{aligned}
\textbf{(*)} \geq\; & - C \Bigg[
\iint_{\mathbf{S}_{\underline{u}}^{1\,\mathrm{c}}} 
	G\left( \frac{|w_{0}(x) - w_{0}(y)|}{|x - y|^{s}} \right)^{\frac{p^{-}}{p^{+}}} 
	\frac{dx \, dy}{|x - y|^{N}}\\
& \quad + \iint_{\mathbf{S}_{\underline{u}}^{2\,\mathrm{c}}} 
G\left( \frac{|w_{0}(x) - w_{0}(y)|}{|x - y|^{s}} \right) 
\frac{dx \, dy}{|x - y|^{N}} \\
& \quad + \iint_{\mathbf{S}_{\underline{u}}^{1\,\mathrm{c}}} 
	G\left( \frac{|w_{0}(x) - w_{0}(y)|}{|x - y|^{s}} \right)^{\frac{p^{-}}{p^{+}}} 	
	G\left( \frac{|\underline{u}(x) - \underline{u}(y)|}{|x - y|^{s}} \right)^{\frac{p^{+} - p^{-}}{p^{-}}} 
	\frac{dx \, dy}{|x - y|^{N}}\\
& \quad + \iint_{\mathbf{S}_{\underline{u}}^{1\,\mathrm{c}}} 
	G\left( \frac{|w_{0}(x) - w_{0}(y)|}{|x - y|^{s}} \right)	
	G\left( \frac{|\underline{u}(x) - \underline{u}(y)|}{|x - y|^{s}} \right)^{\frac{p^{+} - p^{-}}{p^{-}}} 
	\frac{dx \, dy}{|x - y|^{N}}
\Bigg],
\end{aligned}
\end{equation*}
\noindent
where the constant \( C \) depends only on \( p^{-} \), \( p^{+} \), and \( G(1) \). Referring to \( \mathbf{I}_1 \), we obtain

\begin{equation*}
\begin{aligned}
\mathbf{I}_1 \geq\; & - p^{+}(p^{-} - 1) \iint_{\mathbf{S}_{\underline{u}}} 
G\left( \frac{ \left| \underline{u}(x) - \underline{u}(y) \right| }{ \left| x - y \right|^{s} } \right) 
\frac{dx \, dy}{\left| x - y \right|^{N}} \\[6pt]
& - C \Bigg[
\iint_{\mathbf{S}_{\underline{u}}^{1\,\mathrm{c}}} 
	G\left( \frac{|w_{0}(x) - w_{0}(y)|}{|x - y|^{s}} \right)^{\frac{p^{-}}{p^{+}}} 
	\frac{dx \, dy}{|x - y|^{N}}\\[6pt]
& \quad + \iint_{\mathbf{S}_{\underline{u}}^{2\,\mathrm{c}}} 
G\left( \frac{|w_{0}(x) - w_{0}(y)|}{|x - y|^{s}} \right) 
\frac{dx \, dy}{|x - y|^{N}} \\[6pt]
& \quad + \iint_{\mathbf{S}_{\underline{u}}^{1\,\mathrm{c}}} 
	G\left( \frac{|w_{0}(x) - w_{0}(y)|}{|x - y|^{s}} \right)^{\frac{p^{-}}{p^{+}}} 	
	G\left( \frac{|\underline{u}(x) - \underline{u}(y)|}{|x - y|^{s}} \right)^{\frac{p^{+} - p^{-}}{p^{-}}} 
	\frac{dx \, dy}{|x - y|^{N}}\\[6pt]
& \quad + \iint_{\mathbf{S}_{\underline{u}}^{1\,\mathrm{c}}} 
	G\left( \frac{|w_{0}(x) - w_{0}(y)|}{|x - y|^{s}} \right)	
	G\left( \frac{|\underline{u}(x) - \underline{u}(y)|}{|x - y|^{s}} \right)^{\frac{p^{+} - p^{-}}{p^{-}}} 
	\frac{dx \, dy}{|x - y|^{N}}
\Bigg].
\end{aligned}
\end{equation*}

\noindent
We also define the set
\[
\mathbf{S}_{w_{0}} := \left\lbrace (x, y) \in \Omega_{3} \times \Omega_{2} \, : \, w_{0}(x) > w_{0}(y) \right\rbrace,
\]
which allows us to derive the following estimate for $\mathbf{I}_{2}$:
\begin{equation*}
\begin{aligned}
\mathbf{I}_{2} \geq & - \iint_{\mathbf{S}_{w_{0}}} 
g\left( \frac{ |w_{0}(x) - w_{0}(y)| }{ |x - y|^{s} } \right)  
\frac{ w_{0}(x) - w_{0}(y) }{ |w_{0}(x) - w_{0}(y)| } \\
& \quad \times \left( \frac{ (\underline{u}(x) + m)^{p^{-}} - (w_{0}(x) + m + \epsilon)^{p^{-}} }{(w_{0}(x) + m + \epsilon)^{p^{-}-1}} 
- \frac{ (\underline{u}(y) + m)^{p^{-}} - (w_{0}(y) + m + \epsilon)^{p^{-}} }{ (w_{0}(y) + m + \epsilon)^{p^{-}-1} } \right) 
\frac{dx \, dy}{|x - y|^{N + s}} \\[6pt]
& - \iint_{\mathbf{S}_{w_{0}}^{c}} 
g\left( \frac{ |w_{0}(x) - w_{0}(y)| }{ |x - y|^{s} } \right)  
\frac{ w_{0}(x) - w_{0}(y) }{ |w_{0}(x) - w_{0}(y)| } \\
& \quad \times \left( \frac{k}{(w_{0}(x) + m + \epsilon)^{p^{-}-1}} 
- \frac{k}{ (w_{0}(y) + m + \epsilon)^{p^{-}-1} } \right) 
\frac{dx \, dy}{|x - y|^{N + s}}\\
& \geq - C \Bigg[
\iint_{\mathbf{S}_{w_{0}}^{1\,\mathrm{c}}} 
	G\left( \frac{|\underline{u}(x) - \underline{u}(y)|}{|x - y|^{s}} \right)^{\frac{p^{-}}{p^{+}}} 
	\frac{dx \, dy}{|x - y|^{N}}\\
& \quad + \iint_{\mathbf{S}_{w_{0}}^{2\,\mathrm{c}}} 
G\left( \frac{|\underline{u}(x) - \underline{u}(y)|}{|x - y|^{s}} \right) 
\frac{dx \, dy}{|x - y|^{N}} \\
& \quad + \iint_{\mathbf{S}_{w_{0}}^{1\,\mathrm{c}}} 
	G\left( \frac{|\underline{u}(x) - \underline{u}(y)|}{|x - y|^{s}} \right)^{\frac{p^{-}}{p^{+}}} 	
	G\left( \frac{|w_{0}(x) - w_{0}(y)|}{|x - y|^{s}} \right)^{\frac{p^{+} - p^{-}}{p^{-}}} 
	\frac{dx \, dy}{|x - y|^{N}}\\
& \quad +  \iint_{\mathbf{S}_{w_{0}}^{1\,\mathrm{c}}} 
	G\left( \frac{|\underline{u}(x) - \underline{u}(y)|}{|x - y|^{s}} \right)	
	G\left( \frac{|w_{0}(x) - w_{0}(y)|}{|x - y|^{s}} \right)^{\frac{p^{+} - p^{-}}{p^{-}}} 
	\frac{dx \, dy}{|x - y|^{N}}
\Bigg],
\end{aligned}
\end{equation*}
where 
\[
\mathbf{S}^{1}_{w_{0}}{}^{\mathrm{c}} := \left\lbrace (x, y) \in \mathbf{S}_{\underline{u}}^{\mathrm{c}} \;\middle|\;
\frac{ \left| \underline{u}(x) - \underline{u}(y) \right|}{ |x - y|^{s} } < 1 \;\text{and}\;
\frac{ \left| w_{0}(x) - w_{0}(y) \right|}{ |x - y|^{s} } < 1 
\right\rbrace,
\]

\[
\mathbf{S}^{2}_{w_{0}}{}^{\mathrm{c}} := \left\lbrace (x, y) \in \mathbf{S}_{\underline{u}}^{\mathrm{c}} \;\middle|\;
\frac{ \left| \underline{u}(x) - \underline{u}(y) \right|}{ |x - y|^{s} } < 1 \;\text{and}\;
\frac{ \left| w_{0}(x) - w_{0}(y) \right|}{ |x - y|^{s} } > 1 
\right\rbrace,
\]

\[
\mathbf{S}^{3}_{w_{0}}{}^{\mathrm{c}} := \left\lbrace (x, y) \in \mathbf{S}_{\underline{u}}^{\mathrm{c}} \;\middle|\;
\frac{ \left| \underline{u}(x) - \underline{u}(y) \right|}{ |x - y|^{s} } > 1 \;\text{and}\;
\frac{ \left| w_{0}(x) - w_{0}(y) \right|}{ |x - y|^{s} } < 1 
\right\rbrace,
\]

\[
\mathbf{S}^{4}_{w_{0}}{}^{\mathrm{c}} := \left\lbrace (x, y) \in \mathbf{S}_{\underline{u}}^{\mathrm{c}} \;\middle|\;
\frac{ \left| \underline{u}(x) - \underline{u}(y) \right|}{ |x - y|^{s} } > 1 \;\text{and}\;
\frac{ \left| w_{0}(x) - w_{0}(y) \right|}{ |x - y|^{s} } > 1 
\right\rbrace.
\]
\noindent
Then, we have
\begin{equation}\label{equ15}
\iint_{\Omega_{3} \times \Omega_{2}} E_{1}(x, y) \, dx \, dy \geq 0 \quad \text{as} \quad k \to \infty.
\end{equation}

\noindent
Hence, by combining \eqref{equ11}--\eqref{equ15}, we deduce
\begin{equation}\label{equ16}
\int_{\mathbb{R}^N} \int_{\mathbb{R}^N} \mathbf{E}_1(x, y) \, dx \, dy \geq 0 \quad \text{as} \quad k \to \infty.
\end{equation}

\noindent
Next, consider the right-hand side of \eqref{comequ1}. By exploiting the monotonicity property, we observe that
	\begin{equation}\label{equ17}
	\begin{aligned}
	\int_{\Omega} \mathbf{E}_2(x, y) \, dx &\leq \int_{\mathcal{S}_{\epsilon}} k(x) \left( \frac{\underline{u}^{\beta}}{(\underline{u} + m)^{p^{-}-1}} - \frac{(w_{0} + \epsilon)^{\beta}}{(w_{0} + m + \epsilon)^{p^{-}-1}} \right) \\
	&\quad \times \textbf{T}_{\mathfrak{k}} \left( \left[ (\underline{u} + m)^{p^{-}} - (w_{0} + m + \epsilon)^{p^{-}} \right]^+ \right) dx.
	\end{aligned}
	\end{equation}

\noindent
To pass to the limit on the right-hand side of inequality \eqref{equ17}, we adapt some ideas from \cite[Theorem 2.2]{durastanti2022comparison}. First, observe that, on the one hand, by using the inequality \( \textbf{T}_{\mathfrak{k}}(s) \leq s \) for all \( s \geq 0 \) and applying the Mean Value Theorem, we obtain
\begin{equation*}
\begin{aligned}
&k(x)\left( \frac{\underline{u}(x)^{\beta}}{(\underline{u}(x) + m)^{p^{-}-1}} - \frac{(w_{0}(x) + \epsilon)^{\beta}}{(w_{0}(x) + m + \epsilon)^{p^{-}-1}} \right)^+ \times \textbf{T}_{\mathfrak{k}} \left( \left[ (\underline{u}(x) + m)^{p^{-}} - (w_{0}(x) + m + \epsilon)^{p^{-}} \right]^+ \right) \\
&\quad \leq k(x)\, \underline{u}(x)^{\beta} \left( \frac{(\underline{u}(x) + m)^{p^{-}} - (w_{0}(x) + m + \epsilon)^{p^{-}}}{(\underline{u}(x) + m)^{p^{-}-1}} \right) \\
&\quad \leq k(x)\, \underline{u}(x)^{\beta} \left( \frac{(\underline{u}(x) + m)^{p^{-}} - m^{p^{-}}}{(\underline{u}(x) + m)^{p^{-}-1}} \right) \\
&\quad \leq p^{-} k(x)\, \underline{u}(x)^{\beta + 1} \in L^1(\mathcal{S}_{\epsilon}),
\end{aligned}
\end{equation*}

\noindent
By applying the Dominated Convergence Theorem, we obtain
\begin{equation}\label{equ18}
\begin{aligned}
\lim_{m \to 0} &\int_{\mathcal{S}_{\epsilon}} k(x) \left( \frac{\underline{u}(x)^{\beta}}{(\underline{u}(x) + m)^{p^{-}-1}} - \frac{(w_{0}(x) + \epsilon)^{\beta}}{(w_{0}(x) + m + \epsilon)^{p^{-}-1}} \right)^{+} \\
&\qquad \times \textbf{T}_{\mathfrak{k} } \left( \left[ (\underline{u}(x) + m)^{p^{-}} - (w_{0}(x) + m + \epsilon)^{p^{-}} \right]^+ \right) \, dx \\
&= \int_{\mathcal{S}_{\epsilon}} k(x) \left( \underline{u}(x)^{\beta - p^{-} + 1} - (w_{0}(x) + \epsilon)^{\beta - p^{-} + 1} \right)^{+} \\
&\qquad \times \textbf{T}_{\mathfrak{k}} \left( \left[ \underline{u}(x)^{p^{-}} - (w_{0}(x) + \epsilon)^{p^{-}} \right]^+ \right) \, dx.
\end{aligned}
\end{equation}
On the other hand, by applying Fatou’s Lemma, we conclude
\begin{equation}\label{equ19}
\begin{aligned}
\liminf_{m \to 0} &\int_{\mathcal{S}_{\epsilon}} k(x) \left( \frac{\underline{u}(x)^{\beta}}{(\underline{u}(x) + m)^{p^{^-}-1}} - \frac{(w_{0}(x) + \epsilon)^{\beta}}{(w_{0}(x) + m + \epsilon)^{p^{-}-1}} \right)^{-} \\
&\qquad \times \textbf{T}_{\mathfrak{k} } \left( \left[ (\underline{u}(x) + m)^{p^{-}} - (w_{0}(x) + m + \epsilon)^{p^{-}} \right]^+ \right) \, dx \\
&\geq \int_{\mathcal{S}_{\epsilon}} k(x) \left( \underline{u}(x)^{\beta - p^{-} + 1} - (w_{0}(x) + \epsilon)^{\beta - p^{-} + 1} \right)^{-} \\
&\qquad \times \textbf{T}_{\mathfrak{k} } \left( \left[ \underline{u}(x)^{p^{-}} - (w_{0}(x) + \epsilon)^{p^{-}} \right]^+ \right) \, dx.
\end{aligned}
\end{equation}

\noindent 
Taking the $\liminf$ in \eqref{equ17} as $m \to 0$, and using \eqref{equ18} and \eqref{equ19}, we obtain
\begin{equation*}
\begin{aligned}
\liminf_{m \to 0} \int_{\Omega} \mathbf{E}_2(x, y) \, dx 
&\leq \int_{\Omega} k(x) \left(\underline{u}^{\beta - p^{-} + 1} - (w_0 + \epsilon)^{\beta - p^{-} + 1} \right) 
\mathbf{T}_{\mathfrak{k} } \left( \left(\underline{u}^{p^{-}} - (w_0 + \epsilon)^{p^{-}} \right)^{+} \right) dx.
\end{aligned}
\end{equation*}

\noindent
 Moreover, due to the monotonicity, we have
\[
U_{\mathfrak{k} } := - k(x) \left(\underline{u}^{\beta - p^{-} + 1} - (w_0 + \epsilon)^{\beta -  p^{-} + 1} \right) 
\mathbf{T}_{\mathfrak{k} } \left( \left( \underline{u}^{p^{-}} - (w_0 + \epsilon)^{p^{-}} \right)^{+} \right) \geq 0.
\]

\noindent 
Since the sequence $(U_\mathfrak{k} )$ is increasing, the Monotone Convergence Theorem yields
\[
\lim_{\mathfrak{k}  \to \infty} \int_{\Omega} U_\mathfrak{k}  \, dx 
= - \int_{\Omega} k(x) \left( \underline{u}^{\beta - p^{-} + 1} - (w_0 + \epsilon)^{\beta - p^{-} + 1} \right) 
\left( \underline{u}^{p^{-}} - (w_0 + \epsilon)^{p^{-}} \right)^{+} dx.
\]

\noindent
Therefore, since $\beta < p^{-} - 1$, we conclude that
{\small \begin{equation}\label{equ20}
\begin{aligned}
\lim_{\mathfrak{k}  \to \infty} \liminf_{m \to 0} \int_{\Omega} \mathbf{E}_2(x, y) \, dx 
&\leq \int_{\Omega} k(x) \left( \underline{u}^{\beta - p^{-} + 1} - (w_0 + \epsilon)^{\beta - p^{-} + 1} \right) 
\left( \underline{u}^{p^{-}} - (w_0 + \epsilon)^{p^{-}} \right)^{+} dx \leq 0.
\end{aligned}
\end{equation}}

\noindent
 Finally, combining \eqref{equ16}, and \eqref{equ20}, we deduce
\begin{equation*}
\begin{aligned}
0 \leq \int_{\Omega} k(x) \left( \underline{u}^{\beta - p^{-} + 1} - (w_0 + \epsilon)^{\beta - p^{-} + 1} \right) 
\left( \underline{u}^{p^{-}} - (w_0 + \epsilon)^{p^{-}} \right)^{+} dx \leq 0.
\end{aligned}
\end{equation*}

\noindent
 Consequently, we obtain $\underline{u} \leq w_0 + \epsilon \leq \overline{u} + \epsilon$. Letting $\epsilon \to 0$, it follows that $\underline{u} \leq \overline{u}$.
\end{proof}

\noindent
We now present the proof of our second main result concerning problem~\eqref{GGP}. It is worth emphasizing that the function~$h$ may be unbounded both near the origin, due to a mild singularity, and at infinity. Furthermore, the sub and super-solution of~\eqref{GGP}, as defined in Definition~\ref{definition2}, may themselves exhibit unbounded behavior.

\begin{proof}[\textbf{Proof of Theorem~\ref{Theorem2}}]
	Let $\Upsilon, \Phi \in W^{s, G}_{0}(\Omega)$ be any pair of nonnegative functions. Then, the following inequalities hold:
	\begin{equation*}
	\int_{\mathbb{R}^{N}} \int_{\mathbb{R}^{N}} g\left( \frac{\left| \underline{u}(x) - \underline{u}(y) \right|}{\left| x - y \right|^{s}} \right) 
	\dfrac{\underline{u}(x) - \underline{u}(y)}{\left| \underline{u}(x) - \underline{u}(y) \right|} 
	\dfrac{\Upsilon(x) - \Upsilon(y)}{\left| x - y \right|^{N + s}} \, dx \, dy 
	\leq \int_{\Omega} F(x, \underline{u}) \Upsilon(x) \, dx,
	\end{equation*}
	and
	\begin{equation*}
	\int_{\mathbb{R}^{N}} \int_{\mathbb{R}^{N}} g\left( \frac{\left| \overline{u}(x) - \overline{u}(y) \right|}{\left| x - y \right|^{s}} \right) 
	\dfrac{\overline{u}(x) - \overline{u}(y)}{\left| \overline{u}(x) - \overline{u}(y) \right|} 
	\dfrac{\Phi(x) - \Phi(y)}{\left| x - y \right|^{N + s}} \, dx \, dy 
	\geq \int_{\Omega} F(x, \overline{u}) \Phi(x) \, dx.
	\end{equation*}
	Subtracting the above inequalities using the test functions
	\[
	\Upsilon = \frac{\textbf{T}_{\mathfrak{k} }\left( \left(\left(\underline{u} + m\right)^{p^{-}} - \left( \overline{u}+ m\right)^{p^{-}}\right)^{+} \right)}{(\underline{u} + m)^{p^{-} - 1}}, \quad 
	\Phi = \frac{\textbf{T}_{\mathfrak{k} }\left( \left( \left( \overline{u}+ m \right)^{p^{-}} - \left(\underline{u} + m\right)^{p^{-}}\right)^{-} \right)}{\left( \overline{u}+ m \right)^{p^{-} - 1}}, \quad m > 0,
	\]
	we obtain
\begin{equation}\label{equ22}
\begin{aligned}
&\int_{\mathbb{R}^{N}} \int_{\mathbb{R}^{N}} g\left( \frac{|\underline{u}(x) - \underline{u}(y)|}{|x - y|^{s}} \right)
\frac{\underline{u}(x) - \underline{u}(y)}{|\underline{u}(x) - \underline{u}(y)|}
\cdot \frac{\Upsilon(x) - \Upsilon(y)}{|x - y|^{N+s}} \, dx \, dy \\[4pt]
&\quad - \int_{\mathbb{R}^{N}} \int_{\mathbb{R}^{N}} g\left( \frac{|w_0(x) - w_0(y)|}{|x - y|^{s}} \right)
\frac{w_0(x) - w_0(y)}{|w_0(x) - w_0(y)|}
\cdot \frac{\Phi(x) - \Phi(y)}{|x - y|^{N+s}} \, dx \, dy \\[4pt]
&\leq \int_{\left\lbrace\underline{u} > \overline{u}\right\rbrace}  \left( \frac{h(x, \underline{u})}{(\underline{u} + m)^{p^{-}-1}} - \frac{h(x, \overline{u})}{(\overline{u} + m)^{p^{-}-1}}\right) 
\textbf{T}_{\mathfrak{k}} \left(\left(\underline{u} + m\right)^{p^{-}} - \left(\overline{u} + m\right)^{p^{-}} \right) \, dx.
\end{aligned}
\end{equation}
By arguments similar to those used in the proof of Claim~3 in Theorem~\ref{Theorem1}, we find that the left-hand side of \eqref{equ22} is nonnegative. Applying the Mean Value Theorem, we get
	\begin{equation*}
	\begin{aligned}
	&\left( \frac{h(x, \underline{u})}{(\underline{u} + m)^{p^{-}-1}} - \frac{h(x, \overline{u})}{(\overline{u} + m)^{p^{-}-1}} \right)^{+} 
	\textbf{T}_{\mathfrak{k}} \left( (\underline{u} + m)^{p^{-}} - (\overline{u} + m)^{p^{-}} \right) \chi_{\left\lbrace \underline{u} > \overline{u} \right\rbrace} \\
	&\quad \leq h(x, \underline{u}) \left( \frac{(\underline{u} + m)^{p^{-}} - (\overline{u} + m)^{p^{-}}}{(\underline{u} + m)^{p^{-}-1}} \right) \chi_{\left\lbrace \underline{u} > \overline{u} \right\rbrace} \\
	&\quad \leq h(x, \underline{u}) \left( \frac{(\underline{u} + m)^{p^{-}} - m^{p^{-}}}{(\underline{u} + m)^{p^{-}-1}} \right) \chi_{\left\lbrace \underline{u} > \overline{u} \right\rbrace} \\
	&\quad \leq h(x, \underline{u}) \, \underline{u} \, \chi_{\left\lbrace \underline{u} > \overline{u} \right\rbrace} \in L^{1}(\Omega),
	\end{aligned}
	\end{equation*}
	where the last integrability follows from Definition~\ref{definition2}. Therefore, combining this with the reasoning used in the proof of Theorem~\ref{Theorem1}, we conclude that
	\begin{equation*}
	\int_{\left\lbrace \underline{u} > \overline{u} \right\rbrace} \left( \frac{h(x, \underline{u})}{\underline{u}^{p^{-}-1}} - \frac{h(x, \overline{u})}{\overline{u}^{p^{-}-1}} \right)(\underline{u}^{p^{-}} - \overline{u}^{p^{-}})\, dx \leq 0.
	\end{equation*}
	This implies that \( \underline{u} \leq \overline{u} \) almost everywhere in \( \Omega \), which completes the proof.
\end{proof}

\noindent
Building on the preceding results, the uniqueness of solutions to~\eqref{GP} follows directly, as demonstrated below:
\begin{proof}[\textbf{Proof of Corollary~\ref{corollary}}]
	Suppose that \( u \) and \( v \) are two weak solutions of problem~\eqref{GP} in \( W^{s, G}_{\mathrm{loc}}(\Omega) \). By treating \( u \) as a sub-solution and \( v \) as a super-solution of~\eqref{GP}, the weak comparison principle (Theorem~\ref{Theorem1}; see also Theorem~\ref{Theorem2} for the case where \( u, v \in W^{s, G}_{0}(\Omega) \)) implies that \( u \leq v \) almost everywhere in \( \Omega \). Reversing the roles of \( u \) and \( v \), we similarly obtain \( v \leq u \) a.e. in \( \Omega \). Hence, it follows that \( u = v \) a.e. in \( \Omega \).
\end{proof}

\begin{proof}[\textbf{Proof of Corollary~\ref{corollary2}}]
	Without loss of generality, and due to the invariance of the problem under rotations and translations, we may assume that \( \Omega \) is symmetric with respect to the \( x_1 \)-axis. Accordingly, the symmetry of the data implies that \( f(x_1, x') = f(-x_1, x') \) and \( k(x_1, x') = k(-x_1, x') \) for all \( x' \in \mathbb{R}^{N-1} \). Define the function \( v(x_1, x') := u(-x_1, x') \). It is straightforward to verify that \( v \) is a weak solution of problem~\eqref{GP}. Therefore, by the uniqueness result established in Corollary~\ref{corollary}, we deduce that
	\[
	u(x_1, x') = v(x_1, x') = u(-x_1, x') \quad \text{for all } (x_1, x') \in \Omega,
	\]
	which establishes the desired symmetry.
\end{proof}
\section*{Statements and Declarations}
\subsection*{Ethics approval and consent to participate}
Not applicable.
\subsection*{Funding}
Not applicable
\subsection*{Availability of data and materials}
Not applicable
\subsection*{Conflict of interest}
The authors declare that there is no conflict of interest.

\bibliographystyle{plain}
\bibliography{BIB}
\end{document}